\DeclareMathAlphabet{\mathcalligra}{T1}{calligra}{m}{n} 
\newcommand{\circt}{{{\mathop{\hbox{$\bigcirc$\kern-7.5pt\raise-0.5pt\hbox{$t$}\kern3.8pt}}}}}
\newcommand{\circu}{{{\mathop{\hbox{$\bigcirc$\kern-8.5pt\raise0.5pt\hbox{$u$}\kern2.8pt}}}}}
\newcommand{\circv}{{{\mathop{\hbox{$\bigcirc$\kern-8.5pt\raise0.5pt\hbox{$v$}\kern2.8pt}}}}}
\newcommand{\circw}{{{\mathop{\hbox{$\bigcirc$\kern-9.5pt\raise0.5pt\hbox{$w$}\kern1.5pt}}}}}
\newcommand{\circs}{{{\mathop{\hbox{$\bigcirc$\kern-8.5pt\raise0.5pt\hbox{$s$}\kern3.5pt}}}}}
\numberwithin{equation}{section}
\theoremstyle{plain}
\newtheorem{Thm}{Theorem}[section]
\newtheorem{Prop}[Thm]{Proposition}
\newtheorem{Lemma}[Thm]{Lemma}
\newtheorem{Cor}[Thm]{Corollary}
\theoremstyle{definition}
\newtheorem{Def}[Thm]{Definition}
\newtheorem{Cond}[Thm]{Condition}
\newtheorem{Problem}[Thm]{Problem}
\newtheorem{Rmk}[Thm]{Remark}
\newtheorem{Prov}[Thm]{Proviso}
\DeclareSymbolFont{rsfscript}{OMS}{rsfs}{m}{n}
\DeclareSymbolFontAlphabet{\mathrsfs}{rsfscript}
\newcommand{\co}{\operatorname{co}}
\newcommand{\inv}{^{-1}}
\newcommand{\til}[1]{\widetilde {#1}}
\newcommand{\ol}[1]{\overline{#1}}
\newcommand{\rbl}{{\xrightarrow{\hspace*{1cm}}}}
\newcommand{\lbl}{{\xleftarrow{\hspace*{1cm}}}}
\newcommand{\nc}{\newcommand}
\nc{\cT}{\mathrsfs T}
\nc{\bP}{\mathbb{P}}
\nc{\bO}{\mathbb{O}}
\nc{\mC}{\mathcal{C}}
\nc{\mQ}{\mathcal{Q}}
\nc{\mE}{\mathcal{E}}
\nc{\mF}{\mathcal{F}}
\nc{\mL}{\mathcal L}
\nc{\mB}{\mathcal B}
\nc{\mK}{\mathcal K}
\nc{\mH}{\mathcal H}
\nc{\mG}{\mathcal G}
\nc{\mA}{\mathcal A}
\nc{\mM}{\mathcal M}
\nc{\mU}{\mathcal U}
\nc{\mX}{\mathcal X}
\nc{\mY}{\mathcal Y}
\nc{\mZ}{\mathcal Z}
\nc{\mD}{\mathcal D}
\nc{\sfCE}{\mathsf{CE}}
\nc{\sfCL}{\mathsf{CL}}
\nc{\nothing}{\rule{0em}{1ex}}
\nc{\nt}{\nothing}
\nc{\highnothing}{\rule{0em}{3ex}}
\nc{\hnt}{\highnothing}
\nc{\look}{\marginpar{$\bullet$}}
\nc{\ssc}{\scriptscriptstyle}
\nc{\pss}[1]{\nt^{\ssc \subsetneq}\! #1}
\nc{\red}[1]{\textcolor{red}{#1}}
\nc{\blue}[1]{\textcolor{blue}{#1}}
\nc{\black}[1]{\textcolor{black}{#1}}
\newenvironment{romanenumerate}
{\begin{list}{(\roman{enumi})}{\usecounter{enumi}
\setlength{\labelwidth}{2cm}
\setlength{\itemindent}{0pt}
\setlength{\itemsep}{0.5\itemsep}
\setlength{\topsep}{\itemsep}
\setlength{\parsep}{0pt}
}}{\end{list}}
\begin{document}

\title[{Finite approximation of free groups I}]{{Finite approximation of free groups I:\\ the $F$-inverse cover problem}}

\author{K.~Auinger, J.~Bitterlich and M.~Otto}
\address{Fakult\"at f\"ur Mathematik, Universit\"at Wien, Oskar-Morgenstern-Platz 1, A-1090 Wien, Austria}
\email{karl.auinger@univie.ac.at}
\address{   }
\email{bitt.j@protonmail.com}
\address{Department of Mathematics,
Technische Universit\"at Darmstadt,
Schlossgartenstrasse 7,
D-64289 Darmstadt,
Germany}
\email{otto@mathematik.tu-darmstadt.de}

\begin{abstract}
For a finite connected graph $\mE$ with
edge set $E$, a finite $E$-generated group $G$ is constructed such that the set of relations 
$p=1$ satisfied by  $G$ (with $p$ a word over $E\cup E^{-1}$)
is closed under deletion of
generators (i.e.~edges); 
as a consequence,  every element $g\in G$ admits a unique minimal set
$\mathrm{C}(g)$ of edges (the \emph{content} of $g$) needed to
represent $g$ as a word over
$\mathrm{C}(g)\cup\mathrm{C}(g)^{-1}$. The crucial property of the
group $G$  is that
connectivity in the graph $\mE$ is
reflected in $G$ in the following sense: if a word $p$ forms a path $u\longrightarrow v$ in $\mE$ then there exists a $G$-equivalent word $q$ which also forms a path $u\longrightarrow v$ and uses only edges
from their common content; in particular, the content of the corresponding group
element $[p]_G=[q]_G$ spans a connected subgraph of $\mE$ containing
the vertices $u$ and $v$. 
As the free group generated by $E$ obviously has these properties, the construction provides another instance of how certain
features of free groups can be ``approximated'' or ``simulated''
in finite groups.
As an application it is shown that every finite inverse monoid admits a finite $F$-inverse cover. This solves a long-standing problem of Henckell and Rhodes.
\end{abstract}
\subjclass[2020]{{05C25, 05E18, 20M18, 20B99, 20F05, 20F65}}
\keywords{{approximation of free groups in finite groups, permutation group,  action graph, Cayley graph, finite inverse monoid,  finite $F$-inverse cover}}
\maketitle

\section{Introduction}\label{sec:intro} In the influential paper~\cite{HR}, Henckell
and Rhodes stated a series of conjectures and two problems. The paper
was concerned with the celebrated question whether every finite block
group $M$ (a monoid in which every von Neumann regular element
admits a unique inverse) is a quotient of a submonoid of  the power monoid
$\mathfrak{P}(G)$ of some finite group $G$.
Henckell and Rhodes presented an
affirmative answer to the question modulo some conjecture, namely
about the structure of {pointlike sets};  a  subset $X$ of a finite
monoid $M$ is \emph{pointlike} (with respect to groups) if and only if
in \textsl{every} subdirect product $T\subseteq M\times G$ of $M$ with
any  finite group $G$ there exists an element $g\in G$ with $X\times
\{g\}\subseteq T$ (that is, all elements of $X$ \textsl{relate} to
some point $g\in G$). 
{The questions raised by Henckell and Rhodes in~\cite{HR} concerned the algorithmic recognisability of certain subsets of $M$ and relations on $M$ for a given finite monoid $M$. These subsets and relations  are defined by use of the collection of all subdirect products $T\subseteq M\times G$ of $M$ with arbitrary finite groups $G$.}

Shortly after, all stated conjectures and one of the two problems
(about \emph{liftable tuples})
were verified respectively solved by Ash in his celebrated paper
\cite{Ash}. Roughly speaking, Ash proved  that {in the situations mentioned, and even beyond those,} the  collection of all subdirect products $T\subseteq M\times G$ of $M$ with finite groups $G$ has the same ``computational power'' as a particularly chosen ``canonical'' subdirect product $\tau\subseteq M\times F$ of $M$ with some free group $F$. This is a strong form of approximation in finite groups of the free group $F$. The algorithmic recognisability of the aforementioned subsets and relations of $M$ is an immediate consequence. The importance of {Ash's} paper went beyond its
immediate task as in the following years interesting and deep
connections with the profinite topology of the free group
\cite{RibesZalesskii} and
 model theory~\cite{HerwigLascar} have been revealed and studied
\cite{almeidadelgado1, almeidadelgado2}. 

Yet  the second 
problem stated, which was called in~\cite{HR} a ``stronger form of the
pointlike conjecture for inverse monoids'',
was not solved in  Ash's paper and has since then attracted considerable 
attention~\cite{Lawsonglasgow, Lawson, sze1, sze2, ASz, szaszen, sza, billh}. It asked:
\begin{Problem}\label{problem:Finverse} Does every finite inverse monoid admit a finite $F$-inverse cover?
\end{Problem}
An inverse monoid $S$ 
is \emph{$F$-inverse} if every congruence class of
the least group congruence $\sigma$ of $S$
admits a greatest element (with respect to the natural partial order)
and an inverse monoid 
$S$ is a \emph{cover} of an inverse monoid
$M$ if there exists a surjective, idempotent separating homomorphism
$S \to M$.

{When reading the paper~\cite{ASz} by Szendrei and the first author, the second author understood that a result by the third
author~\cite{otto1, otto2} about the existence of certain finite
groupoids can be used to give an affirmative answer 
to Problem~\ref{problem:Finverse}. He presented this solution in his dissertation~\cite{bitterlichdiss} and  his
paper~\cite{bitterlich}.}
Later, some  flaws were discovered in~\cite{otto1, otto2}
which, however,  have been fixed in the meantime~\cite{otto3}. The
intention of the present paper is to give a complete and self-contained
presentation of the solution to Problem~\ref{problem:Finverse} (up to
classical results on inverse monoids), which is based on the ideas and
proofs of~\cite{otto3} but  is in a sense tailored for what is needed
in the present context and presented in a language which
(hopefully) makes it more accessible to the semigroup community. 

{Since an infinite $F$-inverse cover can be constructed for every inverse monoid $M$ by use of a free group $F$, the task for finite $M$ is,
to replace $F$ by a suitable finite group $H$. The group $H$ needs to have a sufficiently high combinatorial complexity in order to ``simulate''
the required behaviour of the free group $F$
with respect to the monoid $M$. Hence the task is to \textsl{approximate} the free group $F$ sufficiently well by a finite group $H$. What this exactly means in the present context is one of the essentials of the paper.}

{The paper is organised as follows: Section~\ref{sec:inverse monoids}
collects prerequisites from inverse monoids, graphs and a proof that
the existence of certain finite groups yields
a positive solution of Problem~\ref{problem:Finverse}. }
Section~\ref{sec:machinery} introduces the main graph-theoretic
tools while Section~\ref{sec:2results} presents two crucial technical
results. Finally, in
Section~\ref{sec:groupoids} we obtain the required group in a construction which intends to ``reflect the geometry'' of a given finite graph {$\mE$ and thereby prove the main result of the paper (Lemma~\ref{thm:main theorem})}.

\section{Inverse monoids}\label{sec:inverse monoids}

\subsection{Preliminaries} A monoid $M$ is \emph{inverse} if every
element $x\in M$
admits a unique element $x^{-1}$, called the \emph{inverse} of $x$, satisfying $xx^{-1}x=x$ and $x^{-1}xx^{-1}=x^{-1}$. This gives rise to a unary operation ${}^{-1}\colon M\to M$ and an inverse monoid may equivalently  be defined as an algebraic structure $(M;\cdot,{}^{-1},1)$ with $\cdot$ an associative binary operation, $1$ a neutral element with respect to $\cdot$ and a unary operation ${}^{-1}$ satisfying the laws 
\[{(x^{-1})}^{-1}=x,\ (xy)^{-1}=y^{-1}x^{-1},\ xx^{-1}x=x \mbox{ and }xx^{-1}yy^{-1}=yy^{-1}xx^{-1}.\] 
In particular, the class of all inverse monoids forms a variety of
algebraic structures (in the sense of universal algebra), the variety
of all groups $(G;\cdot,{}^{-1},1)$ being a subvariety. 
By the Wagner--Preston Theorem~\cite[Chapter 1, Theorem 1]{Lawson}, inverse monoids may as well be characterised as monoids of partial bijections on a set, closed under composition of partial mappings and inversion. Therefore, while groups  model symmetries of mathematical structures,  inverse monoids (or semigroups) model partial symmetries, that is, symmetries between substructures of mathematical structures.

From basic
facts of universal algebra it follows that every inverse monoid $M$
admits a least congruence such that the corresponding quotient
structure is a group. This congruence is usually denoted $\sigma$ and
it can be characterised as the least congruence on $M$
that identifies
all idempotents of $M$ with each other. Another way to characterise
this congruence is this:
two elements $x,y\in M$ are $\sigma$-related if and only if $xe=ye$ for some idempotent $e$ of $M$ (and this is equivalent to  $fx=fy$ for some idempotent $f$ of $M$).

Every inverse monoid $M$ is equipped with a partial order $\le$, the
\emph{natural order},  defined by $x\le y$ if and only if $x=ye$ for
some idempotent $e$ of $M$ (this is equivalent to $x=fy$ for some
idempotent $f$ of $M$).
In particular, $e\le 1$ for every idempotent $e$ of $M$. 
If an inverse monoid $M$ is represented as a monoid of partial
bijections, then
the idempotents of $M$ are exactly the restrictions of the identity function and for $x,y\in M$ we have $x\le y$ if and only if $x\subseteq y$, that is, $x$ is a restriction of $y$.
The order is compatible with the binary
operation  and inversion of $M$ where the latter means that $x\le y$ implies $x^{-1}\le y^{-1}$. 
In terms of the natural order, the congruence
$\sigma$ can be characterised as the least congruence for which the
natural order on the quotient is the identity relation, and, likewise
as the least congruence
that identifies every pair of $\le$-comparable elements. This leads to yet another description of $\sigma$: two elements $x$ and $y$ are $\sigma$-related if and only if they admit a common lower bound with respect to $\le$. For further information on inverse monoids the reader is referred to the monographs by Petrich~\cite{Petrich} and Lawson~\cite{Lawson}.

An inverse monoid $S$
is \emph{$F$-inverse} if every $\sigma$-class
of $S$ 
possesses a greatest element with respect to $\le$. For recent
developments concerning the systematic study of $F$-inverse monoids
and their relevance in various contexts the reader is referred to~\cite{AKS} and the literature cited there. An $F$-inverse monoid
$S$ is an \emph{$F$-inverse cover} of the inverse monoid $M$ if there
exists a surjective idempotent separating homomorphism
$S \to M$. 
As mentioned in the introduction, it has been an outstanding open problem whether every finite inverse monoid $M$ admits a finite $F$-inverse cover.

\subsection{$A$-generated inverse monoids}\label{sec:Agenerated}
Throughout, for any non-empty set $X$ (of letters, of edges, etc.) we
let $X\inv:=\{x\inv \colon x\in X\}$ be a disjoint copy of $X$
consisting of formal inverses of the elements of $X$, and set 
$\til{X}:=X\cup X\inv$.
The mapping $x\mapsto x\inv$ is extended to an
involution of $\til{X}$ by setting ${(x\inv )}\inv=x$, for all $x\in
X$. We let $\til{X}^*$ be the free monoid
over $\til{X}$, which, subject to $(x_1\cdots x_n)\inv=x_n\inv\cdots
x_1\inv$ (where $x_i\in \til{X}$), is the \emph{free involutory
  monoid} over $X$.
The elements of $\til{X}^*$ are called \emph{words over}
$\til{X}$, and we let $1$ denote the \emph{empty word}.
A word $w\in \til X^*$ is \emph{reduced} if it does not contain any
factor of the form $xx^{-1}$ for $x\in \til  X$.
Repeated deletion of such factors in a word $w$, until no one is
present any more, leads to \emph{the reduced form} $\mathrm{red}(w)$ of $w$.

We fix a non-empty set $A$ (called alphabet in this context).
An inverse monoid $M$ together with a (not necessarily injective)
mapping $i_M\colon A\to M$ (called \emph{assignment function}) is an
\emph{$A$-generated} inverse monoid if $M$ is generated by $i_M(A)$
as an inverse monoid, that is, generated
with respect to the operations $1,\cdot,{}^{-1}$. For every congruence
$\rho$ of an $A$-generated inverse monoid $M$, the quotient $M/\rho$
is  $A$-generated with respect to the map $i_{M/\rho}=\pi_\rho\circ
i_M$ where $\pi_\rho$ is the projection $M\to M/\rho$.
A \emph{morphism $\psi$ from the $A$-generated inverse monoid $M$ to the $A$-generated inverse monoid $N$}
is a homomorphism $M\to N$ {respecting generators from $A$}, that is,
satisfying $i_N=\psi\circ i_M$.
If it exists, such a morphism is unique and surjective and is called 
\emph{canonical morphism},
denoted $\psi\colon M\twoheadrightarrow N$. 
{On a more formal level, an $A$-generated inverse monoid is an algebraic structure of the form $(M;\cdot,\inv,1,A)$ where every symbol $a\in A$ is interpreted in $M$ as a constant (that is, as a nullary operation) via the assignment function $i_M$. Canonical morphisms of $A$-generated inverse monoids then are just homomorphisms of algebraic structures in the signature $\{\cdot,\inv,1\}\cup A$.}
{If $M\twoheadrightarrow N$ then $M$ is an
\emph{expansion of $N$}. In our usage, the term ``expansion'' just concerns the relationship between two individual $A$-generated inverse monoids $M$ and $N$. This somehow deviates from the widespread use of that term standing for a functor on certain categories of monoids. }
The special case of $A$-generated groups will play a significant r\^ole in this paper. 

As already mentioned, the assignment function is not necessarily injective, and, what is more, some generators may even be sent to the identity element of $M$.
This is not a deficiency, but rather is adequate
in our context,
since we want the quotient of an $A$-generated structure to be again
$A$-generated.
In particular  $M/\sigma$, the quotient of an $A$-generated inverse monoid $M$
modulo the least group congruence $\sigma$, is an $A$-generated group.

The assignment function $i_M$ is usually not explicitly mentioned; it uniquely extends 
to a homomorphism $[\ \ ]_M\colon \tilde{A}^*\to M$ (of involutory monoids). For every word $p\in \tilde{A}^*$, $[p]_M$  is the \emph{value of  $p$ in $M$} or simply the \emph{$M$-value of $p$}. For two words $p,q\in \til{A}^*$, the $A$-generated inverse monoid $M$ \emph{satisfies the relation} $p=q$ if $[p]_M=[q]_M$, in which case the words $p$ and $q$ are \emph{$M$-equivalent}, while $M$ \emph{avoids the relation} $p=q$ if $[p]_M\ne[q]_M$.

Using the concept of ``$A$-generatedness'' we see that every inverse monoid admits an $F$-inverse cover. Indeed, let $M$ be an inverse monoid; choose a set $A$ with assignment function $i_M\colon A\to M$ so that $M$ becomes $A$-generated and  let $F$ be the free $A$-generated group. Then the subdirect product
\begin{equation}\label{eq:infinite F-inverse cover}
S:=\{([w]_F,[w]_M)\colon w\in \til{A}^*\}\subseteq F\times M
\end{equation}
is  an $F$-inverse cover of $M$.
This is well known and it is easy to see.
{Indeed, the congruence $\sigma$ on $S$ can be described by 
\[([u]_F,[u]_M)\mathrel{\sigma}([v]_F,[v]_M)\mbox{ if and only if
  }[u]_F=[v]_F. \]
Furthermore, for two words $u,v\in \til A^*$ for which $u$ is 
obtained from $v$ by (successive) deletion of some factors of the
form $aa\inv$ ($a\in \til A$) we have 
$[v]_M\le [u]_M$ in any $A$-generated inverse monoid $M$.
Consequently, for a given word $w\in \til A^*$ the maximum element of
the $\sigma$-class of $([w]_F,[w]_M)$ is the element
$([\mathrm{red}(w)]_F,[\mathrm{red}(w)]_M)=
([w]_F,[\mathrm{red}(w)]_M)$.
}
However, the inverse monoid $S$ is infinite, no matter what $M$ is. The Henckell--Rhodes problem then asks if in case of a finite inverse monoid $M$ the infinite free group $F$ in \eqref{eq:infinite F-inverse cover} may  be replaced  by some finite ($A$-generated) group $H$ serving the same purpose. An affirmative answer to this question will be established in Theorem~\ref{thm:groupoid-premorphism}.

\subsection{Graphs}
In this paper, we consider {the Serre definition~\cite{Serre}} of {graph}
structures, admitting multiple directed edges between pairs of
vertices and 
including directed loops at individual vertices. In the literature, such
structures are often called \emph{{multidigraphs,} directed multigraphs} or
\emph{quivers}.
The following formalisation is convenient for our purposes. 
A \emph{graph} $\mE$ is a \textsl{two-sorted structure} $(V, K;\alpha,\omega,{}^{-1})$ with $V$ its set of \emph{vertices}, $K$ its set of \emph{edges} (disjoint from $V$),  with \emph{incidence functions} $\alpha\colon K\to V$ and $\omega\colon K\to V$, selecting, for each edge $e$ the \emph{initial} vertex $\alpha e$ and the \emph{terminal} vertex $\omega e$,
and \emph{involution} ${}\inv\colon K\to K$ satisfying $\alpha e=\omega e\inv$, $\omega e=\alpha e\inv$ and $e\ne e^{-1}$ for every edge $e\in K$. Instead of \textsl{initial/terminal vertex} the terms \textsl{source/target} are also used in the literature. One should think of an edge $e$ with $\alpha e=u$ and $\omega e=v$ in ``geometric'' terms as $e\colon \underset{u}{\bullet}\!\overset{}{\rbl}\underset{v}{\bullet}$ and its inverse $e\inv\colon \underset{u}{\bullet}\!\overset{}{\lbl}\underset{v}{\bullet}$ as ``the same edge but traversed in the opposite direction''. A graph $(V, K;\alpha,\omega,{}^{-1})$ is \emph{oriented} if the edge set $K$ is partitioned as $K=E\cup E^{-1}=\til{E}$ such that every ${}^{-1}$-orbit contains exactly one element of $E$ and one of $E^{-1}$; the edges in $E$ are the \emph{positive} or \emph{positively oriented} edges, those in $E^{-1}$ the \emph{negative} or \emph{negatively oriented} ones. An oriented graph $\mE$ {with set of positive edges $E$} will be denoted as $\mE=(V,\til{E};\alpha,\omega,{}^{-1})$.

A \emph{subgraph} of the graph $\mE$ is a
substructure that is induced over a pair $(V',K')$ of subsets $V'\subseteq V$ and $K'\subseteq K$
both of which are
closed under the operations $\alpha$ and ${}\inv$ (and therefore also
under $\omega$). In particular, every pair $(S,T)$ of subsets $S\subseteq V$ and $T\subseteq  K$
generates a unique
minimal subgraph $\langle (S,T)\rangle$ of $\mE$ containing $(S,T)$,
which is the \emph{subgraph of $\mE$ spanned by $(S,T)$}. An \emph{automorphism}  of a graph $\mE=(V,K;\alpha,\omega,{}^{-1})$ is a pair of maps $\varphi=(\varphi_V,\varphi_K)$ with $\varphi_V\colon V\to V$, $\varphi_K\colon K\to K$ being bijections satisfying for all $e\in K$:
\[\alpha\varphi_K(e)=\varphi_V(\alpha e),\ \omega\varphi_K(e)=\varphi_V(\omega e),\ \varphi_K(e^{-1})=(\varphi_K(e))^{-1}.\]
We note that the second equality is a consequence of the first and third. In the oriented case we require in addition that $\varphi_{\til{E}}(E)=E$ and (therefore also) $\varphi_{\til{E}}(E^{-1})=E^{-1}$.
A benefit from our definition of a graph as a
two-sorted 
functional rather than a relational structure is that there is no
distinction between weak and induced subgraphs and that
concepts like homomorphism, congruence and quotient are easier to handle.

Let $A$ be a finite set; a \emph{labelling} of the graph
$\mathcal{E}=(V, K;\alpha,\omega,{}^{-1})$ \emph{by the alphabet
  $A$} (an $A$-\emph{labelling}, for short) is a mapping $\ell\colon
K\to\til{A}$ respecting the involution: $\ell(e^{-1})=\ell(e)^{-1}$
for all $e\in K$. The labelling $\ell\colon K\to \til{A}$ gives rise
to an {orientation} of $\mE$: setting $E:=\{e\in K \colon \ell(e)\in A\}$
({positive} edges) and  $E^{-1}:=\{e\in K\colon \ell(e)\in A^{-1}\}$
({negative} edges),
it follows that $E\cap E^{-1}=\varnothing$ and we get $K=\til{E}$.

We consider $A$-labelled graphs as structures $(V,
K;\alpha,\omega,{}^{-1},\ell,A)$ in their own right. By a
\emph{subgraph of an $A$-labelled graph} we mean just a subgraph
with the induced labelling.
Morphisms of $A$-labelled graphs are naturally defined as follows. 
Let $\mK=(V, K;\alpha,\omega,{}^{-1},\ell,A)$ and $\mL=(W,
L;\alpha,\omega,{}^{-1},\ell,A)$ be $A$-labelled graphs. \emph{A
  morphism  $\varphi\colon \mK\to \mL$ of $A$-labelled graphs} is a
pair of mappings $\varphi=(\varphi_1,\varphi_2)$, $\varphi_1\colon V\to W$, $\varphi_2\colon K\to L$,
both compatible with the operations $\alpha$ and ${}^{-1}$
(and therefore also $\omega$)
as well as with the labelling. Throughout the paper, in the situation of a morphism $\varphi=(\varphi_1,\varphi_2)\colon \mK\to \mL$ we shall write, for every vertex $v$ [every edge $e$] of $\mK$, $\varphi(v)$ instead of $\varphi_1(v)$ [$\varphi(e)$ instead of $\varphi_2(e)$].

A \emph{congruence $\Theta$ on the $A$-labelled graph $\mK=(V, K;\alpha,\omega,{}^{-1},\ell,A)$} is a pair $(\Theta_V,\Theta_K)$ with $\Theta_V$ an equivalence relation on $V$, $\Theta_K$ an equivalence relation on $K$, compatible with the operations $\alpha$ and ${}^{-1}$ (therefore also $\omega$) and respecting $\ell$, that is:
\[e\mathrel{\Theta_K}f\Longrightarrow \alpha e\mathrel{\Theta_V}\alpha f,\  \omega e\mathrel{\Theta_V}\omega f,\ e^{-1}\mathrel{\Theta_K}f^{-1}\mbox{ for all }e,f\in K\] and
\[e\mathrel{\Theta_K}f \Longrightarrow \ell(e)=\ell(f)\mbox{ for all }e,f\in K.\]
 The definition of the quotient graph $\mathcal{K}/\Theta$ for a congruence $\Theta$ is obvious, and we have the usual  Homomorphism Theorem. As for images under morphisms, the congruence class $v\Theta_V$ of a vertex $v$ [the congruence class $e\Theta_K$ of an edge $e$] will be denoted by $v\Theta$ [by $e\Theta$].

A \emph{non-empty path} $\pi$ in $\mE$ is a sequence $\pi=e_1e_2\cdots e_n$ ($n\ge 1$) of \emph{consecutive} edges (that is $\omega e_i=\alpha e_{i+1}$ for all $1\le i< n$); we set $\alpha\pi:=\alpha e_1$ and $\omega \pi=\omega e_n$ (denoting the initial and terminal vertices of the path $\pi$); the \emph{inverse path} $\pi\inv$ is the path $\pi\inv:=e_n\inv\cdots e_1\inv$; it has initial vertex $\alpha\pi\inv=\omega\pi$ and terminal vertex $\omega\pi\inv=\alpha\pi$. A path $\pi$ is \emph{closed} or a \emph{cycle} if $\alpha\pi=\omega\pi$. We also consider, for each vertex $v$, the \emph{empty path at $v$}, denoted $\varepsilon_v$ for which we set $\alpha\varepsilon_v=v=\omega \varepsilon_v$ and $\varepsilon_v\inv=\varepsilon_v$ (it is convenient to identify $\varepsilon_v$ with the vertex $v$ itself). We say that $\pi$ is a path \emph{from $u=\alpha \pi$ to $v=\omega\pi$}, and we  will also say that $u$ and $v$ are \emph{connected by $\pi$} (and likewise by $\pi\inv$). A graph is connected if any two vertices can be connected by some path.    The subgraph $\langle \pi\rangle$ spanned by the non-empty path $\pi=e_1\cdots e_n$ is the graph spanned by the edges of $\pi$, that is, by the pair $(\varnothing,\{e_1,\dots,e_n\})$; it coincides with $\langle \pi\inv\rangle$; the graph spanned by an empty path $\varepsilon_v$ simply is $\{v\}$ (one vertex, no edge). For a path $e_1\cdots e_k$ in an $A$-labelled graph $\mathcal{E}$, its label  is $\ell(e_1\cdots e_k):=\ell(e_1)\cdots\ell(e_k)$ which is a word in $\til{A}^*$.

\subsection{{Cayley graphs of $A$-generated groups} }
Given an $A$-generated group $Q$ we define the \emph{Cayley graph}
$\mQ$ of $Q$ as follows; 
as an $A$-labelled graph, this graph $\mQ$ depends on the underlying
assignment function $i_Q$:
\begin{itemize}
\item[--] the set of vertices of $\mQ$ is $Q$,
\item[--] the set of edges of $\mQ$ is $Q\times \til{A}$,
and, for $g\in Q,\ a\in \til{A}$, the incidence functions, involution
and labelling are
defined according to 
\[
  \begin{array}{l@{\;\;:=\;\;}l}
    \alpha(g,a) & g,
    \\
    \hnt
    \omega(g,a) &g[a]_Q,
    \\
    \hnt
    (g,a)\inv \!\!&(g[a]_Q,a\inv),
    \\
    \hnt
    \ell(g,a) & a.
  \end{array}
\]
\end{itemize}               
The edge $(g,a)$ should be thought of as
$\underset{g}{\bullet} \overset{a}{\rbl}
\!\underset{ga}{\bullet}$, its
inverse as
$\underset{g}{\bullet}\overset{a\inv}{\lbl}\!\underset{ga}{\bullet}$,
where $ga$ stands for $g[a]_Q$.  We note that $Q$ acts on $\mQ$ by
left multiplication as a group of automorphisms via
\[g\longmapsto {}^hg:=hg \quad \mbox{ and } \;\;
(g,a) \longmapsto {}^h(g,a) := (hg,a)
\]
for all $g,h\in Q$ and $(g,a)\in Q\times\til{A}$, where $h$ is an element of the acting group $Q$, $g$ a vertex of $\mQ$ and $(g,a)$ an edge of $\mQ$.

\subsection{$F$-inverse covers}
{
For a given finite $A$-generated inverse monoid $M$ we intend to construct a finite $A$-generated group $H$ for which the $A$-generated subdirect product
\begin{equation}\label{eq:subdirect product}
S:=\{([w]_H,[w]_M)\colon w\in \til A^*\}\subseteq H\times M
\end{equation}
is an $F$-inverse cover of $M$.
We start with a finite $A$-generated group $Q$ such that, for all $w\in \til A^*$,
$[w]_Q=1_Q$ implies that $[w]_M$ is an idempotent of $M$.
Such a group $Q$ can be found by representing $M$ as an inverse monoid
of partial bijections on a finite set $X$ and extending the partial
mappings $[a]_M$ ($a\in A$) to total permutations $\hat a$ on $X$ or on
some finite superset $Y\supseteq X$ and taking $Q:=\langle \hat a\colon a\in A\rangle$, the group generated by the permutations $\hat a$ ($a\in A$). In semigroup theoretic terms this
just means that the $A$-generated subdirect product
\[\{([w]_Q,[w]_M)\colon w\in \til A^*\} \subseteq Q\times M\] is an \emph{$E$-unitary} cover of $M$.}

{The following lemma will be crucial.
It is well known {and} readers 
familiar with the Margolis--Meakin-expansion $M(Q)$ of a  group $Q$~\cite{MM} will recognise that this lemma essentially proves the 
universal property of $M(Q)$. We present a proof in order to keep the paper
more self-contained; it is a modified version of the proof of Lemma~4.6 in~\cite{AKS}. We fix some notation: for an $A$-generated group $Q$
with Cayley graph $\mQ$, $q\in Q$ and a word $w\in \til A^*$ let
$\pi_q^\mQ(w)$ [resp.~$\pi_1^\mQ(w)$] denote the path in $\mQ$
labelled $w$ that starts at $q$ [resp.~$1_Q$].
\begin{Lemma}\label{lem:MMexpansion}Let $M$ be an $A$-generated
inverse monoid and $Q$ be an $A$-generated group
such that, for all $w\in \til A^*$,
$[w]_Q=1_Q$ implies that $[w]_M$ is an idempotent of $M$.
Then, for any words $u,v\in \til A^*$ for which $[u]_Q=[v]_Q$ and $\langle \pi_1^\mQ(u)\rangle\subseteq \langle \pi_1^\mQ(v)\rangle$ the inequality $[u]_M\ge[v]_M$ holds in $M$.
\end{Lemma}
\begin{proof} The proof is by induction on the length $|u|$ of the
word $u$. If $|u|=0$, that is, if $u=1$ is the empty word, then
$[u]_Q=[v]_Q$ implies that $[v]_Q=[u]_Q=1_Q$,
whence $[v]_M$ is an idempotent so that $[v]_M\le 1_M=[u]_M$.
Let $|u|=1$, that is, $u=a$ is a letter in $\til A$. 
The assumptions $\langle \pi_1^\mQ(a)\rangle \subseteq \langle\pi_1^\mQ(v)\rangle$ and $[a]_Q=[v]_Q$ imply that either (i) $v=v_1av_2$ with $[v_1]_Q=[v_2]_Q=1_Q$, or (ii) $v=v_1a\inv v_2$ and $[v_1]_Q=[a]_Q=[v_2]_Q$. In case (i), $[v_1]_M, [v_2]_M\le 1_M$, whence $[v]_M=[v_1av_2]_M=[v_1]_M[a]_M[v_2]_M\le[a]_M$. In case (ii), $[v_1a\inv]_Q=1_Q=[a\inv v_2]_Q$, whence $[v_1a\inv]_M,[a\inv v_2]_M\le 1_M$ so that
\[[v]_M=[v_1a\inv aa\inv v_2]_M=[v_1a\inv]_M[a]_M[a\inv v_2]_M\le [a]_M.\]
So let $|u|>1$ and let $[v]_Q=[u]_Q$ and $\langle
\pi_1^\mQ(u)\rangle\subseteq \langle \pi_1^\mQ(v)\rangle$, and assume
that the statement of the lemma is true for all words $u'$ with
$|u'|<|u|$ and arbitrary $v$. Choose some factorisation $u=u_1u_2$
with $|u_1|, |u_2|<|u|$. Let $q:=[u_1]_Q$; the assumption $\langle
\pi_1^\mQ(u)\rangle\subseteq \langle \pi_1^\mQ(v)\rangle$ implies that
$q$ is a vertex of $\langle \pi_1^\mQ(v)\rangle$, i.e.\ the path
$\pi_1^\mQ(v)$ meets the vertex $q$. Let $v=v_1v_2$ be a corresponding
factorisation. That is, the terminal vertex of $\pi_1^\mQ(v_1)$ is
$q$. Then $[vv\inv v_1]_Q=[v_1]_Q=[u_1]_Q=q$ and clearly $\langle
\pi_1^\mQ(u_1)\rangle \subseteq \langle \pi_1^\mQ(vv\inv v_1)\rangle$,
whence $[vv\inv v_1]_M\le [u_1]_M$
by the inductive hypothesis. Similarly,
$[v_2v\inv v]_Q=[v_2]_Q=[u_2]_Q$ and \[\langle\pi_q^\mQ(u_2)\rangle\subseteq \langle \pi_1^\mQ(u)\rangle\subseteq\langle \pi_1^\mQ(v)\rangle = \langle\pi_q^\mQ(v_2v\inv v)\rangle,\]
whence also $\langle\pi_1^\mQ(u_2)\rangle\subseteq\langle \pi_1^\mQ(v_2v\inv v)\rangle$ (here we apply the automorphism $x\mapsto{}^{q\inv}\!\!x$ of $\mQ$). The inductive assumption implies $[v_2v\inv v]_M\le [u_2]_M$. Altogether, 
\[[v]_M=[vv\inv v_1]_M[v_2v\inv v]_M\le [u_1]_M[u_2]_M=[u]_M. \]
\end{proof}	}

For a given
finite $A$-generated inverse monoid $M$ and a finite
$A$-generated group $Q$ as above,
we now seek to
provide a finite expansion $H$ of $Q$,
for which the subdirect product \eqref{eq:subdirect product}
is a finite $F$-inverse cover of $M$.
First we isolate an important property of groups generated by an
alphabet.

\begin{Def}[$X$-generated group with content function]
\label{def:groupwithcontentfctn}
Let $X$ be any alphabet; an $X$-generated group $R$ \emph{has a content function $\mathrm{C}$} if for every element $g\in R$ there is a unique $\subseteq$-minimal subset $\mathrm{C}(g)$ of $X$ such that $g$ is represented as a product of elements of $\mathrm{C}(g)$ and their inverses.
\end{Def}

We need to define one further property, which will be crucial towards
the construction of the desired group $H$. 

\begin{Def}[group reflecting the structure of a Cayley graph]\label{def:conditioncayley}
Let $Q$ be an $A$-generated group with Cayley graph $\mQ$, 
where $E:=Q\times A$ is the set of positive edges of $\mQ$.
An $E$-generated group $G$ \emph{reflects the structure of $\mQ$} if the following hold.
\begin{enumerate}
\item The action of $Q$ on $E$ by left multiplication extends to an action of $Q$ on $G$ by automorphisms on the left (denoted $(g,\xi)\mapsto {}^g\xi$ for $g\in Q$ and $\xi\in G$).
\item $G$ has a content function $\mathrm{C}$ 
such that, for any 
word $p\in \til{E}^*$ which forms a path $g\longrightarrow h$ in $\mQ$, the following hold:
\begin{enumerate}
\item if $\mathrm{C}([p]_G)=\varnothing$, that is if $[p]_G=1$, then $g=h$,
\item if $\mathrm{C}([p]_G)\ne\varnothing$, that is if  $[p]_G\ne 1$,
then there exists a word $q\in\til{E}^*$ which also forms a path
$g\longrightarrow h$ in $\mQ$
and such that $[p]_G=[q]_G$ and $q$ uses only edges of the content $\mathrm{C}([p]_G)$ of $[p]_G$ (and their inverses). In particular, the content $\mathrm{C}([p]_G)$ spans a connected subgraph of $\mQ$   containing $g$ and $h$.
\end{enumerate}
\end{enumerate}
\end{Def}

Next let  $Q$ be an $A$-generated group and, for $E=Q\times A$, let
$G$ be {a finite} $E$-generated group reflecting the structure of the
Cayley graph $\mQ$ of $Q$. {The existence of such a group $G$ is guaranteed by Lemma~\ref{thm:main theorem}, whose proof will be 
completed in Section~\ref{sec:groupoids}.}
Since $Q$ acts on $G$ by automorphisms on
the left, we can form the semidirect product
$G\rtimes Q$, which consists of the set $G\times Q$ endowed with the
binary operation 
\[(\gamma,g)(\eta,h):=(\gamma\cdot{}^{g}\eta,gh),\]  inversion
\[(\gamma,g)^{-1}:=({}^{g^{-1}}\gamma^{-1},g^{-1})\] and identity element $(1_G,1_Q)$.
Consider the following $A$-generated subgroup $H$ of $ G\rtimes Q$: 
\begin{equation}\label{eq:expansion of Q}
H:=\langle ([(1_Q,a)]_G,[a]_Q)\colon a\in A\rangle\subseteq G\rtimes Q.
\end{equation} 
Readers familiar with the Margolis--Meakin-expansion $M(Q)$ \cite{MM}  will notice that the group $H$, in a sense, approximates $M(Q)$. The type of construction used for the group $H$ occurs frequently in (semi)group theory, see e.g. Elston~\cite{elston} or Almeida~\cite[Section 10]{almeida:book}; that it can be useful for the construction of $F$-inverse covers is discussed in \cite{ASz}. 
For a word $p\in \til{A}^*$, the value of $p$ in $H$ is
\begin{equation}\label{eq:value in H}
[p]_H=([\pi_1^\mQ(p)]_G,[p]_Q)
\end{equation}
 where, again,  $\pi_1^\mQ(p)$ is the unique path in $\mQ$ starting at $1_Q$ and being labelled $p$, interpreted as a word over  $\til{E}$. 
 This is easily seen by induction on the length $|p|$ of $p$. 
 In particular, $H$ is an expansion of $Q$ with canonical morphism $([\pi_1^\mQ(p)]_G,[p]_Q)\mapsto [p]_Q$.
We can now formulate the main theorem of this section, and derive it essentially based on Lemma~\ref{thm:main theorem}.
{\begin{Thm}\label{thm:groupoid-premorphism}
Let $M$ be a finite $A$-generated inverse monoid,
$Q$ a finite $A$-generated group such that, for all $w\in \til A^*$,
$[w]_Q=1_Q$ implies that $[w]_M$ is an idempotent of $M$.
For $E=Q\times A$, let $G$ be a 
finite $E$-generated
 group (with content function $\mathrm{C}$) which reflects the structure of the Cayley graph $\mQ$ of $Q$ (Definition~\ref{def:conditioncayley}), and let $H$ be the group defined by \eqref{eq:expansion of Q}. 
Then the subdirect product
\[S:=\{([w]_H,[w]_M)\colon w\in \til A^*\}\]
is a finite $F$-inverse cover of $M$.
\end{Thm}}
{\begin{proof}
It is clear that $S$ is finite. The natural order
$\le$ on $S$ is given by $(g,m)\le (h,n)$ if and only if $g=h$ and
$m\le n$. The canonical morphism $S\twoheadrightarrow M$,
$(g,m)\mapsto m$ is idempotent separating. We need to show that
every $\sigma$-class $(g,m)\sigma$ has a greatest element. Note that
$(g,m)\sigma=\{(h,n)\in S\colon h=g\}$. Let $w\in \til A^*$ be a word
such that $[w]_H=g$. Then $[w]_H=([\pi_1^\mQ(w)]_G,[w]_Q)$. Since $G$
reflects the Cayley graph $\mQ$ of $Q$ there exists a word $\pi\in \til E^*$ 
such that $[\pi]_G=[\pi_1^\mQ(w)]_G$, $\pi$ contains only edges from the
content $\mathrm{C}([\pi_1^\mQ(w)]_G)$ (and their inverses)  and $\pi$
forms a path $1_Q\longrightarrow [w]_Q$ in $\mQ$. The path $\pi$ is
induced by some word $u\in \til A^*$, that is, $\pi=\pi_1^\mQ(u)$. As
the terminal vertex of $\pi_1^\mQ(u)$ is $[u]_Q$ we have
$[w]_Q=[u]_Q$, together with $[\pi_1^\mQ(w)]_G=[\pi_1^\mQ(u)]_G$
therefore also $g=[w]_H=[u]_H$. 
While $\pi$ and $u$ are not necessarily uniquely determined by $g$, we note that $\langle \pi_1^\mQ(u)\rangle$ is the graph spanned by $\mathrm{C}([\pi_1^\mQ(w)]_G)$ and therefore  $\langle \pi_1^\mQ(u)\rangle$ is uniquely determined by $g=([\pi_1^\mQ(w)]_G,[w]_Q)$. In addition, for \textsl{every} $v\in \til A^*$ for which $[v]_H=g$ we have \[\langle\pi_1^\mQ(u)\rangle=\langle\mathrm{C}([\pi_1^\mQ(w)]_G)\rangle=\langle\mathrm{C}([\pi_1^\mQ(v)]_G)\rangle,\] which implies $\langle\pi_1^\mQ(u)\rangle\subseteq \langle\pi_1^\mQ(v)\rangle$ for every such $v$. It follows from Lemma~\ref{lem:MMexpansion} that $[v]_M\le[u]_M$ and therefore also
$(g,[v]_M)\le(g,[u]_M)$ for every such $v$. This just says that $(g,[u]_M)$
 is indeed the greatest element of $(g,m)\sigma$.
\end{proof}}
The $F$-inverse cover $S$ of $M$ in
  Theorem~\ref{thm:groupoid-premorphism} is $A$-generated \textsl{as
    an inverse monoid}. In a sense, this is a strong form of
  $F$-inverse cover, as the original definition does not require the
  cover to be $A$-generated as an inverse monoid. For any finite
  $A$-generated inverse monoid $M$ and any finite $A$-generated group
  $K$ expanding an
  $A$-generated group $H$ as in Theorem~\ref{thm:groupoid-premorphism},  there exists  an $F$-inverse
  cover $U\subseteq K\times M$ of $M$ with maximum group quotient $K$,
  albeit not necessarily one that is  $A$-generated as an inverse monoid.

This can be seen as follows. Let $M$ and $H$ be as in
  Theorem~\ref{thm:groupoid-premorphism} and let the $A$-generated
  group $K$ be an expansion of $H$. Then the subdirect product
\begin{equation}\label{eq: F-cover}
	U:=\{([u]_K,[v]_M)\in K\times M\colon u,v\in \til A^*, [u]_H=[v]_H\}
\end{equation}
is an $F$-inverse cover of $M$ with maximum group quotient
$K$. Indeed, let $([u]_K,[v]_M)\in U$, then $[u]_H=[v]_H$. By the
proof of Theorem~\ref{thm:groupoid-premorphism} there exists $w\in
\til A^*$ with $[w]_H=[u]_H$ and such that  $[w]_M\ge [x]_M$ for
\textsl{every} $x\in \til A^*$ for which $[x]_H=[w]_H=[u]_H$.
Since $[w]_H=[u]_H$ we have $([u]_K,[w]_M)\in U$; in addition, $([u]_K,[w]_M)\ge([u]_K,[x]_M)$ for all $x\in \til A^*$ for which $[x]_H=[u]_H$. It follows that $([u]_K,[w]_M)$ is the greatest element of the $\sigma$-class $([u]_K,[v]_M)\sigma$. It should be  emphasised that the $F$-inverse cover $U$ of $M$ cannot be guaranteed to be $A$-generated as an inverse monoid, and the $A$-generated subdirect product of $M$ and $K$ contained in $U$ cannot be
guaranteed to be an $F$-inverse monoid.

\subsection{Excursion: symmetries} 
An alternative perspective on $A$-generated inverse monoids, maybe
broader in a model-theoretic sense, would view them as two-sorted
structures in which the $A$-labelling of the generators is integrated
in an explicit fashion. This leads to the two-sorted structure
\[
\mathfrak{M}:= 
(M,A;\iota, \cdot,\inv,1),
\]
where the set $A$ of labels forms a second sort along with first sort $M$, and 
$\iota\colon A \rightarrow M$ encodes the explicit $A$-labelling
as a function, so that $\iota(A)$ becomes a subset of $M$.
The $A$-generated inverse monoid as a structure $(M;\cdot,\inv,1,A)$ --- with the algebraic signature $\{\cdot,\inv,1\}$ enriched by the set $A$ of constant symbols  interpreted as generators of the inverse monoid $(M;\cdot,\inv,1)$ ---
is clearly rigid since fixing the generators fixes everything.
The two-sorted structure $\mathfrak{M}$, on the other hand, allows us 
to analyse internal symmetries induced by permutations of $A$: 
these precisely are the automorphisms of the two-sorted structure $\mathfrak{M}$,
namely pairs of compatible permutations of the sets $M$ and $A$ that
commute with $\iota \colon A \rightarrow M$ and the algebraic
operations on $M$. As $\iota(A) \subseteq M$ is a generating set
in $(M; \cdot,\inv,1)$, 
compatibility with the algebraic operations
now implies that any such automorphism is uniquely determined by its action on $A$.

{As our construction of the $F$-inverse cover $S$ of $M$ according to
Theorem~\ref{thm:groupoid-premorphism} deals with $A$-generated
inverse monoids, the question arises to which extent this construction
may also be symmetry-preserving -- in the sense of commuting with
permutations of the generator set $A$ that induce automorphisms of $\mathfrak{M}$.
And indeed, our construction can be made fully symmetry-preserving overall. 
Theorem~\ref{thm:groupoid-premorphism}  rests on the chain
\[
  M \;\leadsto\;  Q \;\leadsto\;  H  \;\leadsto\;  S:= H \times_{\!\!\ssc A} M,
\]
where $H \times_{\!\!\ssc A} M$ denotes the $A$-generated subdirect
product of the $A$-generated group $H$ and the $A$-generated inverse
monoid $M$. The step from the $A$-generated group $Q$ to the
$A$-generated group $H$, which is the technically challenging 
construction behind Lemma~\ref{thm:main theorem}, is really based on the
Cayley graph $\mQ$ of $Q$; from this Cayley graph $\mQ$ we first obtain 
a $(Q\times A)$-generated group $G$, which reflects the structure of $\mQ$ in the sense of
Definition~\ref{def:conditioncayley}, and finally $H$ as the
group defined in~\eqref{eq:expansion of Q}.
The Cayley graph $\mQ$, as a graph that is edge-labelled by~$A$,  
lifts every symmetry of the $A$-generated group $Q$ in a
canonical manner. The symmetry-preserving
nature of the core construction in the passage from this Cayley graph 
$\mQ$ to $G$
is captured in Proposition~\ref{prop:auto extends} and to be further discussed in
Section~\ref{sec:groupoids}.
That $H$ as defined in~\eqref{eq:expansion of Q} then carries all shared symmetries of $G$ and
$\mQ$ is straightforward, and similarly for $S = H \times_{\!\!\ssc A} M$
in relation to $H$ and $M$. In fact, all of these
steps between $Q$ and $S$, including the core passage from $\mQ$
to $G$, can be seen to be symmetry-preserving (in terms of two-sorted
presentations with explicit domains for the generator or label sets), simply
because they can all be cast as explicit definitions (in more
model-theoretic terms: as interpretations, albeit of a higher-order nature) of the target structures
over the input structures, which cannot possibly violate isomorphism invariance.}

Therefore the one step that curiously risks breaking symmetries, lies in
the passage from $M$ to $Q$.
The straightforward recipe indicated above is to obtain a representation
of $M$ as an inverse monoid of partial bijections over some set $X$,
which can then be extended to permutations of $X$ to yield a
permutation group $Q$ that relates to $M$ as required for Lemma~\ref{lem:MMexpansion}.
But this method crucially involves free choices of a representation over
a suitable set $X$ and of extensions from partial to global bijections over that
$X$; and these choices can break symmetries.
\footnote{E.g.\ two partial bijections that are identical over $X$ may be extended to permutations of different orders.} 
This seeming obstacle can be overcome though, if we use for instance the
canonical representation of the $A$-generated inverse monoid $M$
by partial bijections over the set $M$ itself, and 
then consider all possible extensions of the partial
mappings $[a]_M \colon M \rightarrow M$ in parallel:
for every possible extension of the family $([a]_M)_{a\in A}$
by total permutations $[ \hat a]_M \supseteq [a]_M$  of $M$, we may
use, for the set~$X$, the disjoint union of copies of $M$,
one for each choice of extensions, and the bijection induced by 
the instances of $[\hat a]_M$ in each copy of $M$ as the generator
set for $Q$ in the symmetric group over~$X$.
This is but one of several variants for the step from $M$ to $Q$ that 
are explicitly definable over $\mathfrak{M}$ and therefore symmetry-preserving.

\subsection{Excursion: pointlike conjecture for inverse monoids
  versus $F$-inverse cover problem}  What can we say about the gap
between these two problems?
Recall that for an $A$-generated inverse monoid $M$, an $A$-generated group $H$ is a witness for the pointlike pairs of $M$ if
\begin{equation}\label{eq:pointlike pairs1}
\forall \ u_1,u_2\in \til{A}^*\ \exists\ v\in
\til{A}^*\colon[u_1]_H\ne [u_2]_H \;\mbox{ or } \; 
[u_1]_M, [u_2]_M\le [v]_M.
\end{equation}
 On the other hand, the $A$-generated subdirect product \[\{([w]_H,[w]_M)\colon w\in \til{A}^* \}\subseteq H\times M\] is an $F$-inverse cover of $M$ provided that
\begin{equation}\label{eq:Fcover}
  \forall \ u_1,u_2\in \til{A}^*\ \exists\ v\in \til{A}^*\colon
  [u_1]_H\ne [u_2]_H \mbox{ or }
  \left\{
    \begin{array}{l@{}}
      [u_1]_M, [u_2]_M\le [v]_M
      \\
      \nt\!\!\mbox{and}
      \\
       {[u_1]_H=[v]_H=[u_2]_H.}
    \end{array}
  \right.
\end{equation}

As shown in~\cite{AS}, the expansion $H=Q^{\mathbf{Ab}_p}$ of an $A$-generated group $Q$
witnesses
the pointlike sets of the inverse monoid $M$ ($Q$ in relation to $M$ as in Lemma~\ref{lem:MMexpansion}) and
therefore the pointlike conjecture for inverse
monoids is verified; in particular $H=Q^{\mathbf{Ab}_p}$ satisfies condition \eqref{eq:pointlike pairs1}. For any prime $p$,  the so-called \emph{universal $p$-expansion} $Q^{\mathbf{Ab}_p}$ of $Q$
is the largest $A$-generated expansion $R\twoheadrightarrow Q$ whose kernel is
an elementary Abelian $p$-group.
This expansion can be  obtained by the construction in~\eqref{eq:expansion of Q}, 
except that the $E$-group
$G$ used there is replaced with the free $E$-generated Abelian group
of exponent $p$ (which is the $|E|$-fold direct product of cyclic
groups of order $p$), in fact a very transparent group. Sufficient for
the verification of the pointlike conjecture is an $E$-group which
reflects the structure of the Cayley graph $\mQ$ of $Q$ in a very weak
sense: the graph spanned by the content  of a word over $\til{E}$
which forms a path $u\longrightarrow v$ requires only a
\textsl{connected component} containing $u$ and $v$.  The enormous
effort we require in the remainder of the paper to construct an expansion
$H$ of $Q$ that satisfies the seemingly innocent, additional condition
$[u_1]_H=[v]_H=[u_2]_H$ in \eqref{eq:Fcover}, indicates that 
the gap between the pointlike problem for inverse monoids and the
$F$-inverse cover problem may indeed be huge.

As already mentioned, Henckell and Rhodes considered Problem~\ref{problem:Finverse} as a ``stronger form'' of the pointlike conjecture for inverse monoids. On the other hand, in the last sentence of their paper they wrote: ``We do not necessarily believe [the $F$-inverse cover problem] has an affirmative answer.'' So, in contrast to what is often reported, Henckell and Rhodes did not really conjecture that every finite inverse monoid does admit a finite $F$-inverse cover, but rather seem to have been undecided about this question. In fact, they seem to have had some feeling that the $F$-inverse cover problem might be hard.

\subsection{{The main result}} 
In order to prove Theorem~\ref{thm:groupoid-premorphism}  {it is sufficient to construct}, for any finite $A$-generated group $Q$ and $E=Q\times A$ a finite $E$-generated group $G$ which reflects the structure of the Cayley graph $\mQ$ of $Q$ according to Definition~\ref{def:conditioncayley}. 
The existence of such a  group $G$  is guaranteed by the following
more general lemma,
which is the main result of the paper. For item (1) recall that every automorphism of an oriented graph induces a permutation of its set of positive edges.

\begin{Lemma}[main lemma]\label{thm:main theorem}
For every finite connected oriented graph $\mE=(V,\til{E};\alpha,\omega,{}^{-1})$ there exists a finite $E$-generated group $G$ which has the following properties:
\begin{enumerate}
\item Every permutation of $E$ induced by an  automorphism of $\mE$ extends to an automorphism of $G$.
\item The set of relations $p=1$ satisfied by $G$ (with $p\in \til{E}^*$) is closed under the deletion of generators and thus $G$ has a content function $\mathrm{C}$ (Proposition~\ref{prop:retractablehasconctent}).
\item For any word $p\in \til{E}^*$ which forms a path $u\longrightarrow v$ in  $\mE$ (with $u$ and $v$ not necessarily distinct vertices of $\mE$) the following hold:
\begin{enumerate}
\item if $\mathrm{C}([p]_G)=\varnothing$ then $u=v$,
\item if $\mathrm{C}([p]_G)\ne\varnothing$ then there exists a word $q\in \til{E}^*$ with $[p]_G=[q]_G$ such that $q$ also forms a path $u\longrightarrow v$ 
in $\mE$ and $q$ only uses edges from the content $\mathrm{C}([p]_G)$ (and their inverses). In particular, $\mathrm{C}([p]_G)$ spans a connected subgraph of $\mE$ containing  $u$ and $v$.
\end{enumerate} 
\end{enumerate}
\end{Lemma}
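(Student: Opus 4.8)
The plan is to build $G$ as an $E$-generated subgroup of a finite group assembled canonically from $\mE$, so that the three requirements are handled by three largely separate mechanisms, the third being the substantial one. For property (2) I would reduce the content function to a retraction property: I would arrange that for every subset $E'\subseteq E$ there is a retraction homomorphism $\rho_{E'}\colon G\to\langle E'\rangle$ fixing the generators in $E'$ and sending every generator in $E\setminus E'$ to $1$. Any such family forces the set of relations $p=1$ to be closed under deletion of generators, and hence, by Proposition~\ref{prop:retractablehasconctent}, yields a well-defined content function with $\mathrm{C}([p]_G)$ contained in the set of edges occurring in $p$. I would obtain these retractions for free by realising $G$ as a quotient $F/N$ of the free group $F$ on $E$ by a normal subgroup $N$ that is invariant under every generator-deleting endomorphism of $F$. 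Property (1) is then essentially bookkeeping: if $N$ is cut out by a set of relators described graph-theoretically rather than by arbitrary choices, it is invariant under the induced $\mathrm{Aut}(\mE)$-action on $\til E^*$, so every graph automorphism permutes the generators, preserves $N$, and descends to an automorphism of $G$.

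The heart of the argument is property (3), and I would treat its two clauses separately. For (3a) — a path-word trivial in $G$ must be closed — I would equip $G$ with a (right) action on a finite set $\Omega$ carrying a distinguished marker $\omega_u$ for each vertex $u$, arranged so that each generating edge $e$, viewed as an element of $G$, sends $\omega_{\alpha e}$ to $\omega_{\omega e}$. This is exactly the data of a map from $\mE$ into a finite connected groupoid, i.e.\ a total realisation of the partial ``transport'' action on vertices, matching the groupoid viewpoint of \cite{otto3}. Traversing any path $p\colon u\longrightarrow v$ then transports $\omega_u$ to $\omega_v$, so $[p]_G=1$ forces $\omega_u=\omega_v$ and hence $u=v$. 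The delicate point here is already visible: this endpoint-detecting quotient must coexist with enough freeness to retain the retractions above and, crucially, to permit rerouting.

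Clause (3b) is where I expect the real obstacle. Given a path $p\colon u\longrightarrow v$ with $C:=\mathrm{C}([p]_G)\neq\varnothing$, I must produce a $G$-equivalent word $q$ that simultaneously forms a path $u\longrightarrow v$ and uses only edges of $C$. As the excerpt notes, the abelian exponent-$p$ expansion already forces $C$ to meet the common connected component of $u$ and $v$, but it does \emph{not} force $C$ itself to span a connected subgraph carrying an actual $u$--$v$ path; securing that stronger geometry-reflecting conclusion in a finite group is precisely what demands the non-abelian construction of Section~\ref{sec:groupoids} together with the two technical results of Section~\ref{sec:2results}. My plan would be to prove a normal-form/reduction statement of the shape: any element of $G$ whose content is contained in a subgraph $\mathcal D$ is already the value of a path in $\mathcal D$ joining the prescribed endpoints. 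I would establish this by induction on a complexity measure of $p$ (the length of a reduced representative, or the number of edges used outside a spanning tree), at each step invoking the retraction $\rho_{C}$ to ``freeze'' the generators outside $C$ and the endpoint-action of (3a) to certify that the rerouted word still runs from $u$ to $v$.

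The main difficulty, and the reason the construction has to be engineered rather than taken off the shelf, is the tension between the two halves of (3): the group must be rigid enough to detect both endpoints (3a) and genuine connectivity of the content (3b), yet free enough that contents stay small and rerouting within $\langle C\rangle$ is always available, all while remaining finite. Reconciling these opposing demands is exactly what a naive finite quotient of $F$ fails to do, and I would expect the bulk of the work, and the payoff of the Section~\ref{sec:2results} lemmas, to lie in verifying that the particular finite group produced in Section~\ref{sec:groupoids} threads this needle.
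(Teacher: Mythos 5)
Your treatment of (1), (2) and (3a) coincides in substance with the paper's: retractability in the sense of Definition~\ref{def:retractable group} (your family of retractions $\rho_{E'}$) yields the content function, a choice-free, graph-theoretically described construction yields equivariance under $\mathrm{Aut}(\mE)$, and a permutation action on the vertex set marking each vertex (the paper's $G_1=\cT(\mX_1)$, preserved upward by $1$-stability) detects endpoints. But for (3b) --- which you yourself identify as the heart of the matter --- your proposal contains no proof, only a restatement (``any element whose content lies in $\mathcal{D}$ is the value of a path in $\mathcal{D}$ with the prescribed endpoints'') together with an induction sketch that cannot work as described. The retraction $\rho_{C}$ acts at the level of group elements only: it tells you $[p]_G=[p_{a\to 1}]_G$ when $a\notin\mathrm{C}([p]_G)$, but it has no mechanism for producing a word that \emph{forms a path in $\mE$}, since incidence of consecutive edges is invisible to the group and to its retractions. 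Likewise the endpoint action is far too weak to ``certify that the rerouted word still runs from $u$ to $v$'': in the completed graph every letter acts at every vertex, so a word with correct endpoint behaviour need not form a path at all. The deeper point is that (3b) is not a property one can verify a posteriori for a group defined by generic requirements; it is a design constraint. The group must \emph{avoid} every relation $p=r$ (with $r$ over $B=\co(p)\setminus\{a\}$) precisely in those cases where no word $q\in\til{B}^*$ forming a path $u\longrightarrow v$ exists in $\mE$, and your proposal never explains how to build a finite group avoiding exactly this infinite, geometrically defined family of relations while keeping it retractable.

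The machinery you are missing is the inductive interleaving \eqref{eq:series of H and G} of $k$-retractable groups with $k$-stable expansions. The step $G_k\leadsto H_k$ (Theorem~\ref{thm:first basic}) raises retractability by one letter; the step $H_k\leadsto G_{k+1}$ adjoins components built from (augmented) clusters and full coset extensions $\sfCE(H_k,\mA_k;\bP_A)$, and it is the coset extensions that do the work for (3b): they ``unfold'' the Cayley graph so that a path-word $p$ on $k+1$ letters admitting no rerouting over $B$ connects two distinct $B$-components of the skeleton, forcing $G_{k+1}$ to avoid every relation $p=r$ with $r\in\til{B}^*$ (Remark~\ref{rmk:unfolding}, Lemma~\ref{lem:contentconnected}). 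The genuine difficulty --- which your last paragraph acknowledges but does not address --- is that adjoining these components must not destroy any relation in $\le k$ letters; this is $k$-stability (Definition~\ref{def:stable}, Proposition~\ref{prop:k-stabilityH_k--->}), and securing it is exactly the purpose of the cluster property, bridge-freeness, and the forward/upward induction theorems (Theorems~\ref{thm:forward induction} and~\ref{thm:upward induction}). Without an argument playing this role, your proposal remains a correct identification of the obstacle rather than a proof that it can be overcome.
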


\begin{Rmk}\rm The free group generated by $E$ obviously enjoys properties (1)--(3) of Lemma~\ref{thm:main theorem}. Hence, the main result of the paper is another instance of when the behaviour of a free group can be ``simulated'' or ``approximated'' by a finite group~\cite{almeidadelgado1,almeidadelgado2,Ash,HerwigLascar, otto0}, {in contrast to~\cite{Coulbois} where such an approximation is not possible.} 
\end{Rmk}
The remainder of the paper is devoted to proving
Lemma~\ref{thm:main theorem}.
This requires quite a bit of work. It will be accomplished in
Section~\ref{sec:groupoids}.   In order to achieve this goal we
introduce several graph-theoretic constructions which will be
presented in Sections~\ref{sec:machinery} and~\ref{sec:2results}. 
{The results in those 
three sections are of a more general nature, 
may be of independent interest and 
will be of particular use  in the follow-up paper~\cite{ABO}.}

\section{Tools}\label{sec:machinery}
In this section we introduce some graph-theoretic
constructions, which later will enable the construction of a group $G$
as mentioned above. The group itself will be
realised as a permutation group defined by its \textsl{action graph}. It is
a well-established approach to construct finite $A$-generated  groups
which avoid certain unwanted relations, to proceed as described in the
following.
First  encode the relations in a finite $A$-labelled directed graph
${\mathcal X}$  --- the set of unwanted relations will be infinite in
most cases, but must in some sense be regular (recognisable by a
finite automaton). If necessary take a  quotient ${\mathcal X}/{\equiv}$
of ${\mathcal X}$ which guarantees that the edge labels from $A$
induce partial permutations on the vertex set. Finally form some
\textsl{completion} $\ol{{\mathcal X}/{\equiv}}$ of ${\mathcal X}/{\equiv}$,
through extending the 
partial permutations to total permutations of the vertex set of
${\mathcal X}/{\equiv}$ or of some finite superset.
The letters $a\in A$ then act as permutations on the finite set of
vertices of $\ol{{\mathcal X}/{\equiv}}$ and one gets a finite
permutation group that avoids the unwanted relations. 

The simplest example of this procedure is the construction of a finite $A$-generated
group which avoids a single relation $p=1$ for a given reduced word $p\in \til A^*$
--- this provides a transparent and elegant proof that every free group is residually
finite. A slightly more general
application is the Biggs construction~\cite{Biggs} providing a finite group that avoids
all relations $p=1$ for  all reduced words $p$ of length
up to a given bound~$n$ --- this has been used for the construction of
finite regular graphs of large girth.
A meanwhile classical and more advanced application of this approach is Stallings' proof of Hall's
Theorem that every finitely generated subgroup of a free group $F$ is
closed in the profinite topology of $F$~\cite{stallings}.
Here a finite $A$-generated group is constructed that avoids all the
(infinitely many) relations of the form $h=p$ where $h$ runs through all elements
of a finitely generated subgroup $H$ of the free $A$-generated group $F$
and $p$ is a fixed element of $F\setminus H$.
Many more examples can be found in~\cite{KapMas, auinbors} and elsewhere. In his
paper~\cite{Ash} Ash definitely developed some mastership of arguments of this kind.
Independently, the third author has suggested
a considerable refinement of this approach~\cite{otto0}. He proposed a
construction which is inductive on the subsets of the generating set
$A$ in the sense that the $k$th group $G_k$ satisfies/avoids all
relations $p=1$ in at most $k$ letters that should be
satisfied/avoided by the final group $G$. In the step $G_k\leadsto
G_{k+1}$ not only new relations $p=1$ in more than $k$ letters are
added which are to be avoided (by adding components to the  graph
which defines $G_k$) but, at the same time, the relations in at most
$k$ letters must be preserved.
The motivation for this approach has come from some relevant
applications to  hypergraph coverings and
finite model theory~\cite{otto0}. The constructions in this section and the results of the next section are of this flavour and are taken from the third author's~\cite{otto3}.
\subsection{$E$-graphs and $E$-groups} We slightly change
perspective: since the edges of the graph $\mE$ of
Lemma~\ref{thm:main theorem}
are the letters of the
labelling alphabet  we now denote the labelling alphabet by $E$.
An $E$-labelled graph is an
\emph{$E$-graph} if every vertex $u$
has, for every label $a\in \til{E}$, at most one edge
with initial vertex $u$ and label $a$. In the literature, such graphs occur under a variety of different names, such as \emph{folded graph}~\cite{KapMas} or \emph{inverse automaton}~\cite{AS,auinbors}, to mention just two. In an $E$-graph $\mathcal{K}$,
for every word $p\in \til{E}^*$ and every vertex $u$ there is at most
one path $\pi=\pi_u^\mathcal{K}(p)$ with initial vertex $\alpha\pi=u$ and
label $\ell(\pi)=p$. For a path $\pi$ in $\mathcal{K}$ with initial
vertex $u$, terminal vertex  $v$ and label $p\in \til{A}^*$ (for
$A\subseteq E$) we write $u\overset{p}{\longrightarrow}v$ 
and call $\pi$ an $A$-\emph{path}
$u\longrightarrow v$; the vertices $u$ and $v$ are
\emph{$A$-connected} in $\mK$. The \emph{$A$-component} of a vertex
$v$ of the $E$-graph $\mathcal{K}$, denoted $v\mathcal{K}[A]$, is the
subgraph of $\mathcal{K}$ spanned by  all paths in $\mathcal{K}$
having initial vertex $v$ and whose labels are in $\til{A}^*$. A
labelled graph $\mathcal{K}$ is called \emph{complete} or  a
\emph{group action graph} (also called  \emph{permutation
  automaton}) if every vertex $u$
has, for every label $a\in \til{E}$ exactly one edge $f$ with initial vertex $\alpha f=u$ and label $\ell(f)=a$; in this case, for every word $p\in \til{E}^*$ and every vertex $u$ there exists exactly one path $\pi=\pi_u^\mathcal{K}(p)$ starting at $u$ and having label $p$. We set $u\cdot p:=\omega(\pi_u^\mathcal{K}(p))$, the terminal vertex of the path starting at $u$ and being labelled $p$; then,  for every $p\in \til{E}^*$, the mapping 
$[p] \colon V\to V$, $u\mapsto u\cdot p$ is a permutation of the
vertex set $V$ of $\mathcal{K}$. 
Thus the involutory monoid $\til{E}^*$ acts on $V$ by permutations on the right.
The permutation group
\begin{equation}\label{eq:transition group}
\mathrsfs{T}(\mathcal{K}):=\{[p]\colon p\in \til{E}^*\}
\end{equation}
 obtained this way, is called the \emph{transition group} $\mathrsfs{T}(\mathcal{K})$ of the graph $\mathcal{K}$.
This transition group $\mathrsfs{T}(\mathcal{K})$ is an $E$-generated
group (\emph{$E$-group} for short) in a natural way, the letter $e\in \til{E}$ induces the
permutation $[e]$ which maps every vertex $u$ to the terminal vertex
$\omega\pi_u(e)$ of the edge $\pi_u(e)$ which is the unique edge with
initial vertex $u$ and label $e$. Note that this edge may be a loop
edge for every vertex $u$ (so $[e]$ might be the identity element of
$\mathrsfs{T}(\mathcal{K})$). Moreover, it may happen that distinct
letters $e\ne f\in \til{E}$ induce the same permutation.

A crucial fact concerning the transition group
$G=\mathrsfs{T}(\mathcal{K})$ is the following:  for every connected
component $\mathcal{C}$ of $\mathcal{K}$ and every vertex $u$ of
$\mathcal{C}$ there is a unique surjective graph morphism
$\varphi_u\colon \mathcal{G}\twoheadrightarrow \mathcal{C}$ 
from the Cayley graph $\mathcal{G}$ of $G$ onto $\mathcal{C}$
for which $\varphi_u(1)=u$;
we call $\varphi_u$ the \emph{canonical morphism $\mathcal{G}\twoheadrightarrow\mathcal{C}$  with respect to} $u$; occasionally we shall leave the vertex $u$ undetermined and shall speak of \textsl{some} canonical morphism $\mathcal{G}\twoheadrightarrow\mathcal{C}$. {The existence of these canonical morphisms will be frequently assumed without further mention.} For easy reference we give a name to this phenomenon.

\begin{Def}\label{def:covering relation}\rm The Cayley graph $\mG$ of an $E$-group $G$ \emph{covers} a complete, connected $E$-graph $\mC$ if there is a canonical morphism $\varphi\colon \mG\twoheadrightarrow \mC$.
\end{Def}

An $E$-graph $(V,K;\alpha,\omega,{}^{-1},\ell, E)$ is \emph{weakly complete} if, 
for every letter $a\in
\til{E}$, the partial permutation on $V$ induced by $a$ is a
permutation on its domain; in other words, provided that the graph is
finite, the subgraph spanned by all edges with label $a$ is a disjoint
union of cycle 
graphs ($a$-cycles).
For every weakly complete graph $\mathcal{K}$ we denote by
$\overline{\mathcal{K}}$ its \emph{trivial completion}, that is, the
complete graph obtained by adding, for every $a\in \til{E}$, a loop
edge with  label $a$ to every vertex not already contained in an
$a$-cycle of $\mathcal{K}$.

\subsection{$k$-retractable groups, content function and $k$-stable expansions} \label{sec:retractable,stable,content} For
$a\in E$ and $p\in \til{E}^*$ let $p_{a\to 1}$ be the word obtained
from $p$ by deletion of all occurrences of $a$ and $a^{-1}$ in
$p$. Let $G$ be an $E$-group; for every $A\subseteq E$ let $G[A]$ be
the $A$-generated subgroup of $G$.

\begin{Def}\label{def:retractable group}\rm
An $E$-group $G$ is
\emph{retractable} if, for all words $p,q\in \til{E}^*$ and every
letter $a\in E$ the following holds:
\footnote{It suffices to restrict this postulate to the case $q=1$.}
\[ [p]_G=[q]_G\Longrightarrow [p_{a\to 1}]_G=[q_{a\to 1}]_G.\]
Moreover,  $G$ is \emph{$A$-retractable} if $G[A]$ is retractable (as an $A$-group), and, for $k\le|E|$, $G$ is \emph{$k$-retractable} if $G$ is $A$-retractable for every $A\subseteq E$ with $|A|=k$. 
\end{Def}
Of course, $k$-retractability implies $l$-retractability for all $l\le k$, and every group is $1$-retractable.
Retractability of an $E$-group $G$ means that for every subset $A\subseteq E$ the mapping
\[E\to E\cup\{1\},\ a\mapsto\begin{cases} a\mbox{ if }a\in A\\ 1\mbox{ if }a\notin A\end{cases}\] extends to an endomorphism $\psi_A$ of $G$, which in fact is a retract endomorphism onto $G[A]$ (the image of $\psi_A$ is $G[A]$ and its restriction to $G[A]$ is the identity mapping). For an $E$-group
$G$ and $A\subseteq E$ we denote the Cayley graph of $G[A]$,
considered as an $A$-graph, by $\mathcal{G}[A]$; this graph is weakly
complete as an $E$-graph and, as above,  we denote its trivial completion by
$\overline{\mathcal{G}[A]}$.
In light of the connection
with retract endomorphisms
we see the following.

\begin{Prop}\label{prop:retractable groups} An $E$-group $G$ is retractable if and only if its Cayley graph $\mG$ covers $\overline{\mG[A]}$ for every $A\subseteq E$.
\end{Prop}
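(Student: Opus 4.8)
The plan is to reduce the statement to a single-subset equivalence and then quantify over all $A\subseteq E$. By the remark preceding the proposition, $G$ is retractable precisely when, for \emph{every} $A\subseteq E$, the letter map sending $a\mapsto a$ for $a\in A$ and $a\mapsto 1$ for $a\notin A$ extends to a retract endomorphism $\psi_A$ of $G$ with image $G[A]$. Thus it suffices to establish, for each fixed $A\subseteq E$,
\[
\psi_A \text{ exists} \quad\Longleftrightarrow\quad \mG \text{ covers } \ol{\mG[A]},
\]
and then to take the conjunction over all $A$.

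First I would record exactly how $\til{E}^*$ acts on the trivial completion $\ol{\mG[A]}$. Its vertex set is $G[A]$; for $a\in A$ the $a$-labelled edges are the Cayley edges of $G[A]$, that is, right translation by $[a]_G$, whereas for $a\in E\setminus A$ the completion attaches a loop at every vertex, so such a letter acts as the identity. Consequently, reading a word $p\in\til{E}^*$ from a vertex $v$ lands at $v\cdot[\hat p]_{G[A]}$, where $\hat p$ denotes the word obtained from $p$ by deleting every occurrence of letters lying outside $\til{A}$. In particular, starting at the vertex $1$ one reaches $[\hat p]_{G[A]}$, which is exactly the value $\psi_A([p]_G)$ whenever $\psi_A$ is defined. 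I would also note that $\ol{\mG[A]}$ is connected, being the loop-augmented Cayley graph of the group $G[A]$, and that $G[A]$ acts on it by graph automorphisms via left translation, transitively on vertices.

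Next I would invoke the general description of canonical morphisms out of a Cayley graph. Since both $\mG$ and the complete graph $\ol{\mG[A]}$ have, from each vertex, exactly one edge of each label $a\in\til{E}$, a label-preserving graph morphism $\mG\to\ol{\mG[A]}$ is completely determined by the image $u$ of the vertex $1$, via $[p]_G\mapsto u\cdot p$, and it is well defined if and only if $[p]_G=[q]_G$ implies $u\cdot p=u\cdot q$ for all $p,q\in\til{E}^*$; as the target is connected, any such morphism is automatically surjective, hence canonical. Because $G[A]$ acts transitively by automorphisms on $\ol{\mG[A]}$, a canonical morphism with some base vertex $u$ exists if and only if one with base vertex $1$ exists. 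By the previous paragraph, the base-$1$ condition reads: $[p]_G=[q]_G$ implies $[\hat p]_{G[A]}=[\hat q]_{G[A]}$ for all $p,q$, which is precisely the assertion that $[p]_G\mapsto[\hat p]_{G[A]}$ is a well-defined map, i.e. that $\psi_A$ exists (it is then automatically a homomorphism, being induced by the involutory-monoid map $\til{E}^*\to G[A]$). This establishes the displayed equivalence.

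I expect the only real subtlety to be the bookkeeping of the completion: one must correctly identify that the added loops make the letters of $E\setminus A$ act trivially in $\ol{\mG[A]}$, so that traversing a path there realises exactly the letter-deletion underlying $\psi_A$, and one must use the transitive automorphism action of $G[A]$ to eliminate the apparent dependence on the base vertex hidden in the definition of ``covers''. Both points are routine once the action of $\til{E}^*$ on $\ol{\mG[A]}$ has been unwound, and with the displayed equivalence in hand the proposition follows upon letting $A$ range over all subsets of $E$.
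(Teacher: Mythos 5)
Your proposal is correct and takes essentially the same route as the paper: both rest on the equivalence (stated just before the proposition) between retractability and the existence of the retract endomorphisms $\psi_A$, together with the key observation that traversing a word $p$ in $\ol{\mG[A]}$ realises exactly the letter deletion $p\mapsto \hat p$, since the added loops absorb the letters outside $A$. The only differences are organisational: you package both directions into a single per-subset equivalence (existence of $\psi_A$ iff $\mG$ covers $\ol{\mG[A]}$), whereas the paper verifies the converse directly from the $q=1$ form of Definition~\ref{def:retractable group} with $B=E\setminus\{a\}$, and you make explicit the normalisation of the base vertex of the covering morphism via the left-translation action of $G[A]$, a point the paper leaves implicit when it writes the image path as $\pi_1^{\ol{\mG[B]}}(p)$.
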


\begin{proof} Suppose that $G$ is retractable and $A\subseteq E$. The
  retract endomorphism $\psi_A$ is a canonical morphism  $\psi_A\colon
  G\twoheadrightarrow G[A]$ if $G[A]$ is considered as an $E$-group
  with all $e\in E\setminus A$ being identity generators. Its Cayley
  graph with respect to $E$ coincides with $\overline{\mG[A]}$. It
  follows that there is a canonical graph morphism $\mG
  \twoheadrightarrow \ol{\mG[A]}$, that is, $\mG$ covers $\overline{\mG[A]}$. 

Suppose conversely that for every $A\subseteq E$ there is a canonical
graph morphism $\mG \twoheadrightarrow \ol{\mG[A]}$. We note that this
morphism must be injective when restricted to $\mG[A]$ (considered as
a subgraph of $\mG$).  Let $p\in \til{E^*}$,  $a\in E$ and suppose
that $[p]_G=1$. Then $p$ labels a closed path $\pi_1^{\mG}(p)$
at $1$ in $\mG$. Let $B=E\setminus\{a\}$. The canonical morphism
$\mG\twoheadrightarrow \ol{\mG[B]}$ maps the path $\pi_1^{\mG}(p)$ to
the path $\pi_1^{\ol{\mG[B]}}(p)$ which is also closed. The paths
$\pi_1^{\ol{\mG[B]}}(p)$ and $\pi_1^{\ol{\mG[B]}}(p_{a\to 1})$
traverse the same edges except loop edges labelled $a^{\pm 1}$, and
therefore visit the same vertices.
So $\pi_1^{\ol{\mG[B]}}(p_{a\to 1})$ is also closed, and as it runs entirely in $\mG[B]$, it follows that $[p_{a\to 1}]_{G[B]}=1$ and therefore $[p_{a\to 1}]_G=1$.
\end{proof}

For a word $p\in \til{E}^*$ the \emph{content} $\co(p)$ is the set of all letters $a\in E$ for which $a$ or $a^{-1}$ occurs in $p$.
The importance of retractable $E$-groups for our purpose comes from
the fact that such $E$-groups admit a \emph{content function}
 (Definition~\ref{def:groupwithcontentfctn}). 
Indeed, assume that $G$ is retractable. Then, for $p,q\in \til{E}^*$ and $a\in E$ the equality $[p]_G=[q]_G$ implies $[p_{a\to 1}]_G=[q_{a\to 1}]_G$. Suppose now that $a\in \co(p)$ but $a\notin \co(q)$. Then
 the words $q$ and $q_{a\to 1}$ are identical. Hence $[p]_G=[q]_G$ implies
 \[
   [p_{a\to 1}]_G=[q_{a\to 1}]_G=[q]_G=[p]_G.
 \]
In this way, we may delete (without changing its  value $[p]_G$) every letter in a word $p$ which does not occur in every other representation $q$ of the group element $[p]_G$. 
This leads to the following definition.

\begin{Def}\label {def:G-content}
  Let $G$ be a retractable $E$-group and $g\in G$.
  The \emph{content} $\mathrm{C}(g)$ of $g$ is
\[\mathrm{C}(g):=\bigcap\big\{\co(q)\colon q\in \til{E}^*, [q]_G=g\big\}.\]
For a word $p\in \til{E}^*$ the $G$\emph{-content} of $p$ is the content $\mathrm{C}([p]_G)$.
\end{Def}

The terminology is justified as $\mathrm{C} \colon g \mapsto \mathrm{C}(g)$ 
 clearly is a content function in the sense of
 Definition~\ref{def:groupwithcontentfctn}. 
 So we have shown the following.

\begin{Prop}\label{prop:retractablehasconctent} Every retractable group  has a content function.
\end{Prop}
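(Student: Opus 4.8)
The plan is to take the set $\mathrm{C}(g)$ from Definition~\ref{def:G-content} --- the intersection of the contents of all words representing $g$ --- and to show that it is exactly the unique $\subseteq$-minimal subset of $E$ over which $g$ can be written, as demanded by Definition~\ref{def:groupwithcontentfctn}. Fix $g\in G$. The easy half is that $\mathrm{C}(g)$ is contained in every admissible set: if $q\in\til{E}^*$ satisfies $[q]_G=g$ and $\co(q)\subseteq C$ for some $C\subseteq E$, then $\mathrm{C}(g)\subseteq\co(q)\subseteq C$ directly from the definition of $\mathrm{C}(g)$ as an intersection. Thus, once we know that $g$ itself is representable over $\mathrm{C}(g)$, it will follow at once that $\mathrm{C}(g)$ is the smallest such set and that no competing minimal set can exist.

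The substance of the proof --- and the one point that genuinely needs retractability rather than a formal manipulation --- is therefore to exhibit a word $w\in\til{E}^*$ with $[w]_G=g$ and $\co(w)=\mathrm{C}(g)$. First I would choose, among all words representing $g$, one word $w$ whose content $\co(w)$ is $\subseteq$-minimal; such a word exists since $E$ is finite. By construction $\mathrm{C}(g)\subseteq\co(w)$. For the reverse inclusion, suppose toward a contradiction that some letter $a\in\co(w)$ lies outside $\mathrm{C}(g)$. Since $\mathrm{C}(g)$ is the intersection of the contents of all representations of $g$, there is a $q$ with $[q]_G=g$ and $a\notin\co(q)$, so that $q_{a\to 1}=q$. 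Retractability (Definition~\ref{def:retractable group}), applied to $[w]_G=[q]_G$, then yields $[w_{a\to 1}]_G=[q_{a\to 1}]_G=[q]_G=g$; but $\co(w_{a\to 1})=\co(w)\setminus\{a\}$ is strictly smaller than $\co(w)$, contradicting the minimal choice of $w$. Hence $\co(w)\subseteq\mathrm{C}(g)$, so $\co(w)=\mathrm{C}(g)$ and in particular $g\in G[\mathrm{C}(g)]$.

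Combining the two halves finishes the argument: $g$ is representable over $\mathrm{C}(g)$, while $\mathrm{C}(g)$ is contained in every subset of $E$ over which $g$ is representable, so $\mathrm{C}(g)$ is the unique $\subseteq$-minimal such subset and $\mathrm{C}$ is the desired content function. The only real obstacle is the middle step --- that the individually inessential letters can be removed \emph{simultaneously} without altering the value of $g$ --- and passing to a minimal-content representation is precisely what reduces this global statement to the single-letter deletion guaranteed by retractability. This is cleaner than deleting the inessential letters one at a time, which would force one to re-justify the deletion against a fresh representation after each removal.
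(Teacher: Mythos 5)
Your proof is correct and rests on exactly the same mechanism as the paper: retractability turns the equality $[w]_G=[q]_G$ into $[w_{a\to 1}]_G=[q_{a\to 1}]_G=[q]_G$ whenever $a$ is missing from $\co(q)$, which is precisely the paper's argument preceding Definition~\ref{def:G-content}. The only difference is organizational --- you package the deletion step as an extremal choice of a $\subseteq$-minimal-content representative plus a one-step contradiction, where the paper deletes the inessential letters iteratively; both variants are sound and essentially identical in substance.
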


In case $G$ is retractable, for any two subsets $A,B\subseteq E$ we have
\begin{equation}\label {eq:2acyclic}
G[A]\cap G[B]=G[A\cap B].
\end{equation}
Groups satisfying  this condition for all $A,B\subseteq E$ have been called
\emph{$2$-acyclic} by the third author in
\cite{otto0,otto3}: condition~(\ref{eq:2acyclic}) rules out 
patterns as on the left-hand side of Figure~\ref{fig:2/3-acyclic} where
$g$ would belong to $G[A]\cap G[B]$ but not to $G[A\cap B]$,
and the cosets $G[A]$ and $G[B]$ form
a non-trivial $2$-cycle.  In other words, 
condition~\eqref{eq:2acyclic} implies that the intersection of
two cosets $gG[A]$ and $hG[B]$ in $G$ is either empty or
is a coset of the form $kG[A\cap B]$. In terms of connectivity
in the Cayley graph $\mathcal{G}$ of $G$
this means that, if two vertices $u$ and $v$ are connected by an
$A$-path as well as by a $B$-path, then there is even an $(A\cap
B)$-path $u\longrightarrow v$;
this point of view will be frequently used in the paper.

But indeed, retractable groups also avoid patterns as on the
right-hand side of  Figure~\ref{fig:2/3-acyclic}. In the terminology
of~\cite{otto0,otto3}, they are even \emph{$3$-acyclic}. This means
that, for all $A,B,C \subseteq E$ and all 
$g,h,k \in G$ the following holds: 
\begin{equation}\label{eq:3acyclic}
  \begin{array}{@{}l@{}}
     gG[A]=hG[A],\ hG[B]=kG[B] \mbox{ and }\ kG[C]=gG[C]
    \\
\Longrightarrow \;\;\hnt
hG[A\cap B]\cap kG[B\cap C]\cap gG[C\cap A]\ne\varnothing,
    \end{array}
\end{equation}
as we shall see in passing, in connection with the proof 
of Lemma~\ref{lem:3-acyclic} below.

\begin{figure}[ht]
\begin{tikzpicture}[scale=0.7]
\draw plot [smooth cycle] coordinates {(-2.5,1.5) (0,2.5) (2,1.5)( 2.5,3) (0,3.5) (-2,3)};
\draw plot [smooth cycle] coordinates {(-2.3,2) (-2,2.5) (0,1) (2,2.5)( 2.3,1.3)
  (1,0)(0,-.3) (-1,0) (-2,1)};
\filldraw (-1.75,2) circle (2.5pt); 
\draw(-1.75,2) node[left]{$1$};
\filldraw(1.75,2) circle (2.5pt);
\draw(1.75,1.9) node[right]{$g$};
\draw(0,.5) node {$G[B]$};
\draw(0,3) node {$G[A]$};
\draw(0,-1) {};
\end{tikzpicture}
\qquad\qquad \begin{tikzpicture}[scale=0.6]
\filldraw(-3,-1.3)circle(2.5pt);
\draw(-3,-1)node[below left]{$g$};
\filldraw(3,-1)circle(2.5pt);
\draw(3,-1)node[right]{$k$};
\filldraw(1,3)circle(2.5pt);
\draw(1,3)node[below]{$h$};
\draw plot [smooth cycle] coordinates {(0.5,3) (1.5,3)(2.9,1.3) (4,-1.2) (2,-1)(0.7,1)};
\draw plot [smooth cycle] coordinates {(-4,-2)(0,-0.3)(1.5,3.5)(-1.5,2)};
\draw plot [smooth cycle] coordinates {(-4,-1) (0,-0.3)(2,-0.4)(3.5,-.7)(4,-1.5)(1,-1.8)(-3,-2)};
\draw(0,-1.8)node[below]{$k{G}[C]$};
\draw(3.4,2.3)node{$h{G}[B]$};
\draw(-2.1,2.6)node{$g{G}[A]$};
\end{tikzpicture}
\caption{Coset $2$- and $3$-cycles}
\label{fig:2/3-acyclic}
\end{figure}

\begin{Rmk}
\label{rem:2and3acyc}
Retractable $E$-groups are $2$- and $3$-acyclic in the sense of
satisfying conditions~\ref{eq:2acyclic} and~\ref{eq:3acyclic},
meaning that their Cayley graphs do not admit 
connectivity patterns of cosets as in Figure~\ref{fig:2/3-acyclic}.
\end{Rmk}

\begin{Def}\label{def:stable} For $A\subseteq E$, an expansion $H\twoheadrightarrow G$ of $E$-groups
is $A$-\emph{stable} if the canonical morphism is injective when
restricted to $H[A]$; it is \emph{$k$-stable} (for $k<|E|$)
if it is $A$-stable for every $k$-element subset $A$ of $E$.
\end{Def}  
We arrive at our first basic
construction. Here and in the following
we use $\sqcup$ and
$\bigsqcup$ to denote the disjoint union of graphs; recall the
definition of the transition group of a complete graph
\eqref{eq:transition group}.

\begin{Thm}\label{thm:first basic} Let $\mathcal{X}$ be a complete $E$-graph, $1\le k<|E|$ and suppose that the transition group $G=\mathrsfs{T}(\mathcal{X})$ is $k$-retractable. Then the transition group
\[H:=\mathrsfs{T}\left(\mathcal{X}\,\sqcup\,\bigsqcup \big\{\overline{\mathcal{G}[C]}\colon C\subseteq E, |C|=k\big\}\right)\] 
is $(k+1)$-retractable and is a $k$-stable expansion of $G$. Moreover, every $k$-stable expansion of $H$ is also $(k+1)$-retractable.
\end{Thm}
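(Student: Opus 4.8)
The plan is to read off equality in $H$ componentwise. Since $\mX$ is one of the components of the disjoint union, restriction to $\mX$ yields a canonical morphism $[p]_H\mapsto[p]_G$, so $H$ is an expansion of $G$. For a subset $B\subseteq E$ write $p_{B\to 1}$ for the word obtained from $p$ by deleting all occurrences of the letters of $B$ and their inverses (obtained by iterating the single-generator deletion rule). In $\ol{\mG[C]}$ every letter outside $C$ acts as a loop, by the very definition of the trivial completion, so $p$ acts on $\ol{\mG[C]}$ by right multiplication by $[p_{(E\setminus C)\to 1}]_{G[C]}$; since $G[C]\le G$, two such multiplications agree exactly when the corresponding values agree in $G$. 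Hence $[p]_H=[q]_H$ holds precisely when $[p]_G=[q]_G$ and, for every $k$-element subset $C\subseteq E$, one has $[p_{(E\setminus C)\to 1}]_G=[q_{(E\setminus C)\to 1}]_G$. This characterization is the engine for all three assertions.

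First I would prove that $H\twoheadrightarrow G$ is $k$-stable. Fix $A$ with $|A|=k$ and $p,q\in\til A^*$ with $[p]_G=[q]_G$; I must verify the right-hand side of the characterization. The condition coming from $\mX$ is given. For a $k$-subset $C$, the $C$-restriction of a word in $\til A^*$ is just its $(A\cap C)$-restriction, so I may replace $E\setminus C$ by $A\setminus C\subseteq A$. Here $k$-retractability enters: since $G$ is $A$-retractable, the group $G[A]$ is retractable, and iterating its deletion rule passes from $[p]_{G[A]}=[q]_{G[A]}$ to $[p_{(A\setminus C)\to 1}]_{G[A]}=[q_{(A\setminus C)\to 1}]_{G[A]}$, which is exactly $[p_{(E\setminus C)\to 1}]_G=[q_{(E\setminus C)\to 1}]_G$. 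Thus $[p]_H=[q]_H$, establishing $k$-stability of the expansion.

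Next I would derive $(k+1)$-retractability of $H$ from this $k$-stability. Let $|D|=k+1$, let $a\in D$, and let $p,q\in\til D^*$ with $[p]_H=[q]_H$; I want $[p_{a\to 1}]_H=[q_{a\to 1}]_H$. Put $C_0:=D\setminus\{a\}$, so $|C_0|=k$ and both $p_{a\to 1},q_{a\to 1}\in\til{C_0}^*$. Applying the characterization to the single component $\ol{\mG[C_0]}$ yields $[p_{a\to 1}]_G=[q_{a\to 1}]_G$, because for $p\in\til D^*$ the $C_0$-restriction is precisely $p_{a\to 1}$. Since these two words lie in the $k$-generated part $\til{C_0}^*$, the $k$-stability established above lifts their equality in $G$ to the desired equality $[p_{a\to 1}]_H=[q_{a\to 1}]_H$ in $H$. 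As $D$, $a$, $p$ and $q$ were arbitrary, $H$ is $(k+1)$-retractable.

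For the final clause the same delete-then-lift pattern applies with $H$ now playing the role of $G$. Let $H'\twoheadrightarrow H$ be any $k$-stable expansion; given $|D|=k+1$, $a\in D$ and $p,q\in\til D^*$ with $[p]_{H'}=[q]_{H'}$, pushing forward to $H$ gives $[p]_H=[q]_H$, whence $[p_{a\to 1}]_H=[q_{a\to 1}]_H$ by the $(k+1)$-retractability of $H$ just proved; since $p_{a\to 1},q_{a\to 1}\in\til{C_0}^*$ with $|C_0|=k$, the $k$-stability of $H'\twoheadrightarrow H$ lifts this back to $[p_{a\to 1}]_{H'}=[q_{a\to 1}]_{H'}$. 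I expect the main obstacle to be the componentwise characterization of the opening paragraph: one must check carefully that in each trivially completed component $\ol{\mG[C]}$ every generator outside $C$ acts as the identity, so that deleting one generator from a $(k+1)$-letter word drops it into the $k$-generated regime where $k$-stability is available. Once this is secured, the retractability hypothesis on $G$ is only ever invoked at the $k$-level, which is exactly what makes the inductive step close.
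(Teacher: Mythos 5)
Your proof is correct, and its first half ($k$-stability) is essentially the paper's own argument in algebraic dress: the paper checks that a word $p$ with $\vert\co(p)\vert\le k$ and $[p]_G=1$ labels a closed path in every component of the defining graph, handling the components $\ol{\mG[C]}$ with $\co(p)\not\subseteq C$ by exactly the deletion argument you use, and your componentwise characterization of $[p]_H=[q]_H$ (agreement in $G$ plus agreement of all $C$-restrictions in $G$) is a correct and convenient packaging of that computation. Where you genuinely diverge is in the second half. The paper proves the \emph{moreover} clause first: for an arbitrary $k$-stable expansion $K\twoheadrightarrow H$ it invokes the covering characterization of retractability (Proposition~\ref{prop:retractable groups}) and manufactures canonical graph morphisms $\mK[A]\twoheadrightarrow\ol{\mK[B]}$ by restricting $\mK\twoheadrightarrow\ol{\mG[B]}\cong\ol{\mK[B]}$, where the first morphism comes from the components of $H$'s defining graph and the isomorphism comes from $k$-stability; the $(k+1)$-retractability of $H$ is then read off as the special case $K=H$. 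You go the other way around: you prove $(k+1)$-retractability of $H$ directly at word level (the component $\ol{\mG[C_0]}$ forces $[p_{a\to 1}]_G=[q_{a\to 1}]_G$, and $k$-stability lifts this back to $H$), and you then deduce the moreover clause from a general transfer principle that nowhere uses the structure of $H$: any $k$-stable expansion of any $(k+1)$-retractable group is $(k+1)$-retractable. Your organization is more modular and in one respect tidier: since retractability only ever requires deleting a single letter, your words always drop from $k+1$ letters to exactly $k$, so you only need the components $\ol{\mG[C]}$ with $\vert C\vert=k$ that are literally present in the definition of $H$, whereas the paper's covering argument quantifies over all $B\subsetneq A$ and must quietly absorb the case $\vert B\vert<k$, where $\ol{\mG[B]}$ is not a component of the defining graph and the morphism $\mK\twoheadrightarrow\ol{\mG[B]}$ additionally rests on the $k$-retractability of $G$. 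What the paper's formulation buys in exchange is that it stays entirely in the language of coverings and canonical graph morphisms, which is the machinery that the subsequent sections (clusters, coset extensions, the inductive construction of $G$) actually run on.
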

\begin{proof} We first show that $H$ is a $k$-stable expansion of
  $G$. So, let $p\in \til{E}^*$ be a word {with $\vert{\co(p)}\vert\le k$} and suppose that $[p]_G=1$. We need to show that
  $[p]_H=1$. In order to do so it is sufficient to show that, for
  every vertex $v$ in
  $\mathcal{X}\sqcup\bigsqcup_{|C|=k}\overline{\mathcal{G}[C]}$ the
  path $\pi_v(p)$ which starts at $v$ and has label $p$ is a
  cycle.
  This is obvious for every $v\in \mathcal{X}$ and $v\in
  \overline{\mathcal{G}[A]}$ when $A$ is a set of $k$ letters for
  which $p\in \til{A}^*$. So, let $B\subseteq E$ with $|B|=k$ and
  suppose that $p\notin \til{B}^*$, which means that at least one
  element of the content of $p$ does not belong to $B$,
  and let $v$ be a vertex of $\overline{\mathcal{G}[B]}$. Let $p'$ be
  the word obtained from $p$ by deletion of all letters from
  $\co(p)\setminus B$.
  Since $G$ is $k$-retractable, we have $[p']_G=1$ and hence also $[p']_{G[B]}=1$ since $p'$  contains only letters from $B$.  It follows that the path $\pi_v^{{\mathcal{G}[B]}}(p')$ is closed and hence so is $\pi_v^{\overline{\mathcal{G}[B]}}(p')$. Since the paths $\pi_v^{\overline{\mathcal{G}[B]}}(p)$ and $\pi_v^{\overline{\mathcal{G}[B]}}(p')$ meet exactly the same vertices --- the two paths differ only in loop edges labelled by letters from $\co(p)\setminus B$ --- it follows that $\pi_v^{\overline{\mathcal{G}[B]}}(p)$ is also closed. Altogether, $[p]_H=1$ and the expansion $H\twoheadrightarrow G$ is $k$-stable.

Let $K\twoheadrightarrow H$ be a $k$-stable expansion; then the
expansion $K\twoheadrightarrow G$ is also $k$-stable. We show that $K$
is $(k+1)$-retractable,
which then also applies to $K=H$.
So let  $A\subseteq E$ with $|A|=k+1$; according to
Proposition~\ref{prop:retractable groups} it suffices to show that for
every subset $B\subsetneq A$ there is a canonical morphism  {
$\mK[A]\twoheadrightarrow \ol{\mK[B]}^A$, where $\ol{\mK[B]}^A$ denotes the trivial completion 
of $\mK[B]$ as an $A$-graph, that is, loop edges labelled
by letters form $A\setminus B$ (and their inverses) are added to all
vertices of $\mK[B]$.} From the definition of $H$ and the assumption on
$K$ it follows that there is a canonical morphism
$\mK\twoheadrightarrow \overline{\mG[B]}$. {Indeed, if $|B|=k$ then by definition of $K$ and $H$, $\mK\twoheadrightarrow \mH\twoheadrightarrow \overline{\mG[B]}$ since $\overline{\mG[B]}$ is a component in the graph defining $H$ as a transition group. If $|B|<k$ we may choose a set $C$ with $B\subseteq C\subseteq A$ and $|C|=k$. Again there is a canonical morphism $\mK\twoheadrightarrow \ol{\mG[C]}$. Since $G[C]$ is retractable there is a canonical morphism $\mG[C]\twoheadrightarrow{\ol{\mG[B]}^C}$, where ${\ol{\mG[B]}^C}$ is the trivial completion of $\mG[B]$ as a $C$-graph. By adding loop edges for all labels not in $C$ to all vertices, the morphism $\mG[C]\twoheadrightarrow \ol{\mG[B]}^C$ can be extended to a morphism $\ol{\mG[C]}\twoheadrightarrow \ol{\mG[B]}$. Composition with the morphism $\mK\twoheadrightarrow \ol{\mG[C]}$ then yields the desired morphism $\mK\twoheadrightarrow \ol{\mG[B]}$.}
But $K\twoheadrightarrow G$ is $k$-stable, hence $K[B]\cong G[B]$ and therefore also $\ol{\mK[B]}\cong \ol{\mG[B]}$. It follows that the restriction of the morphism $\mK\twoheadrightarrow \ol{\mG[B]}\cong \ol{\mK[B]}$ to $\mK[A]$ provides the required morphism. 
\end{proof}

The principal idea of the paper is to construct a series of $E$-generated permutation groups
\begin{equation}\label{eq:series of G}
G_1 \twoheadleftarrow G_2\twoheadleftarrow\cdots\twoheadleftarrow G_{|E|}=:G
\end{equation}
defined by an ascending sequence $\mX_1\subseteq \mX_2\subseteq
\cdots\subseteq \mX_{|E|}$ of complete $E$-graphs such that
$G_k=\mathrsfs{T}(\mX_k)$ is $k$-retractable and
$G_{k+1}\twoheadrightarrow G_k$ is $k$-stable for every $k$.
The crucial property of this sequence
in relation to the given $E$-graph $\mathcal{E}$
is the following:

\bigskip\hfill
\begin{minipage}{.97\textwidth}
For every word $p\in \til{E}^*$ on $k+1$ letters which forms a path $u\overset{p}{\longrightarrow} v$ in $\mathcal{E}$ and every letter 
$a\in A:=\co(p)$  either there is a word $q$ in the letters $A
\setminus\{a\}$ such that $[p]_{G_{k+1}}=[q]_{G_{k+1}}$ and $q$ also forms a path $u\overset{q}{\longrightarrow}v$ in $\mathcal{E}$, or otherwise (if no such $q$ exists)   there is  a component in $\mX_{k+1}\setminus \mX_k$ which guarantees that $G_{k+1}$ avoids the relation $p=p_{a\to 1}$, so that $a$  belongs to the content of
$[p]_{G_{k+1}[A]}$ and therefore to the content of $[p]_G$.
\end{minipage}

\bigskip
The graph-theoretic constructions to be introduced in the following
are designed to serve this purpose.
In order to guarantee that $G_{k+1}\twoheadrightarrow G_k$ is
$k$-stable, the new components of $\mX_{k+1}$ are constructed in a way
so that their
$B$-components for $k$-element subsets $B$ of $E$ have already
occurred as subgraphs of $\mX_k$. 
This turns out to be a  challenging task. It crucially involves
$E$-graphs whose $A$-components 
for $(k+1)$-element subsets $A$ are designed so that their transition
groups avoid certain new relations over~$A$
but preserve all relations over $B$ for every $B\subsetneq A$. The
latter is guaranteed, as already mentioned, by the fact that all
$B$-components of the new components in $\mX_{k+1}$ have been
encountered already as subgraphs at earlier stages of the construction.

\subsection{Two crucial constructions: clusters and coset extensions}
\label{subsec:clustercosetextn}
We introduce two crucial constructions involving Cayley graphs. Let $G$ be an $E$-group; for $A\subseteq E$ and $g\in G$, $g\mG[A]$ has the obvious meaning: it denotes the $A$-component of the vertex $g$ of $\mG$ and is isomorphic (as an $A$-graph) with $\mG[A]$ --- we shall call such graphs \emph{$A$-coset graphs} or simply \emph{coset graphs} if the set of labels is understood. In the following subsections we shall construct new (bigger) graphs by gluing together disjoint copies of various coset graphs for different subsets $A\subseteq E$. In this context, the notation $v\mG[A]$, where $v$ is some vertex of a graph, means that the $A$-component of $v$ in the graph in question is isomorphic with the full $A$-coset graph $\mG[A]$.

\begin{Prov}
\label{prov:Aretractable}
For the remainder of the section
(\S~\ref{subsec:clustercosetextn}.1--3)
all $E$-groups $G$ are assumed to be $A$-retractable, i.e.\ $G[A]$ is retractable for the
(arbitrary but fixed) subset $A \subseteq E$ under consideration.  
\end{Prov}

In Sections~\ref{subsubsection:clusters}
and~\ref{subsubsection:cosetextns} we discuss families of
\emph{clusters} and \emph{coset extensions} 
whose $A$-components, as subgraphs of the $E$-graphs $\mX_k$,
provide the essential information for the setup of the
expansions in the series~(\ref{eq:series of G});
as discussed above, we need to account for their $B$-components for $B \subsetneq A \subseteq E$.

\subsubsection{Clusters}
\label{subsubsection:clusters}
Let $G$ be an $E$-group, $A\subseteq E$ and
assume that, as stated in Proviso~\ref{prov:Aretractable}, 
$G[A]$ is retractable. For every set $\mathbb{P}$ of proper subsets of $A$, the graph
\[\mathsf{CL}(G[A],\mathbb{P}):=\bigcup_{B\in \bP} \mathcal{G}[B]\subseteq \mG[A]\]
is an \emph{$A$-cluster}. Note that $\mathsf{CL}(G[A],\mathbb{P})$ is the subgraph of $\mathcal{G}[A]$ which is spanned by all $B$-paths in $\mathcal{G}[A]$ starting at $1$, for $B\in\mathbb{P}$.
The \emph{core} of the cluster is the subgraph formed by the intersection $\bigcap_{B\in \bP}\mathcal{G}[B]$, and by retractability of $G[A]$, 
$\bigcap_{B\in \bP}\mathcal{G}[B]=\mathcal{G}[\bigcap_{B\in \bP} B]$.
This core is always non-empty but may consist of the vertex $1$ only; the subgraphs $\mathcal{G}[B]$, for $B\in \mathbb{P}$, are the \emph{constituent cosets} of the cluster $\mathsf{CL}(G[A],\mathbb{P})$.
Included in the definition of an $A$-cluster is, for $\bP=\{B\}$, every graph $\mG[B]$ with $B\subsetneq A$.
The structure of $\mathsf{CL}(G[A],\mathbb{P})$ as an $A$-graph  actually  only depends on the collection of the ``small'' subgroups $G[B]$, $B\in\mathbb{P}$ rather than on the entire group $G[A]$: indeed the cluster can be assembled from the constituents $\mathcal{G}[B]$ by forming their disjoint union and factoring by the congruence which identifies an element (vertex or edge) of some $\mathcal{G}[B]$ and some $\mathcal{G}[C]$ if and only if these elements coincide in $\mathcal{G}[B\cap C]$ {(recall that retractability of $G[A]$ implies that $G[B\cap C]=G[B]\cap G[C]$)}.
More precisely, let $\varphi\colon \bigsqcup_{B\in \bP}\mG[B]\to \mG$
be the morphism which maps every coset graph $\mG[B]$ to itself, considered as a subgraph of $\mG$. Let $\Theta$ be the mentioned congruence on $\bigsqcup_{B\in\bP} \mG[B]$. Then the kernel $\ker \varphi$ of $\varphi$ (that is, the equivalence relation induced by $\varphi$ on its domain) contains $\Theta$; 
retractability of $G[A]$  even implies the equality $\ker\varphi=\Theta$. From the Homomorphism Theorem we get
\begin{equation}\label{eq: cluster}
\sfCL(G[A],\bP)\cong \mathrm{im}(\varphi)\cong \left.\bigsqcup_{B\in \bP} \mG[B]\right.\big/\, \Theta.    
\end{equation}

A consequence of this fact is the next lemma which will be of
essential use in  the proof of Proposition~\ref{prop:k-stabilityH_k--->}.

\begin{Lemma}\label{lem:small cosets 1}  Let $G\twoheadrightarrow H$ be a $(k-1)$-stable expansion between k-retractable $E$-groups $G$ and $H$. Then for any $A\subseteq E$ with $|A|=k$ and any set $\bP$ of proper subsets of $A$, the labelled graphs $\sfCL(G[A],\bP)$ and $\sfCL(H[A],\bP)$ are isomorphic.
\end{Lemma}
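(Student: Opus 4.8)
The plan is to reduce the statement to the quotient description of clusters established just above the lemma, and then to build the desired isomorphism by gluing together isomorphisms of the small constituent coset graphs. Since $G$ and $H$ are $k$-retractable and $|A|=k$, both $G[A]$ and $H[A]$ are retractable, so that description applies on both sides: writing $\Theta_G$ and $\Theta_H$ for the respective gluing congruences on the disjoint unions (identifying an element of $\mG[B]$ with one of $\mG[C]$ precisely when the two coincide in the overlap $\mG[B\cap C]$, and analogously for $H$), we have $\sfCL(G[A],\bP)\cong \big(\bigsqcup_{B\in\bP}\mG[B]\big)/\Theta_G$ and $\sfCL(H[A],\bP)\cong \big(\bigsqcup_{B\in\bP}\mH[B]\big)/\Theta_H$. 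It therefore suffices to produce a label-preserving isomorphism between the two disjoint unions that carries $\Theta_G$ onto $\Theta_H$.

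Let $\varphi\colon G\twoheadrightarrow H$ be the canonical morphism of the expansion. Every $B\in\bP$ is a \emph{proper} subset of $A$, hence $|B|\le k-1$, and likewise $|B\cap C|\le k-1$. Because the expansion is $(k-1)$-stable, $\varphi$ is injective on $G[B]$ for each such $B$: indeed $B$ lies in some $(k-1)$-element subset $A'\subseteq E$, and $G[B]\subseteq G[A']$, on which $\varphi$ is injective by $A'$-stability. Since $\varphi$ respects generators we also have $\varphi(G[B])=H[B]$, so $\varphi$ restricts to a label-preserving graph isomorphism $\varphi_B\colon\mG[B]\to\mH[B]$, and similarly to an isomorphism on each overlap $\mG[B\cap C]\to\mH[B\cap C]$.

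First I would assemble the maps $\varphi_B$ ($B\in\bP$) into a component-wise isomorphism $\Phi\colon\bigsqcup_{B\in\bP}\mG[B]\to\bigsqcup_{B\in\bP}\mH[B]$ of disjoint unions of labelled graphs. The heart of the argument is then to verify that $\Phi$ transports $\Theta_G$ onto $\Theta_H$. This rests on two ingredients. Retractability of $G[A]$ and $H[A]$ yields $2$-acyclicity, giving $G[B]\cap G[C]=G[B\cap C]$ and $H[B]\cap H[C]=H[B\cap C]$, so that the overlaps are \emph{exactly} the coset graphs $\mG[B\cap C]$ and $\mH[B\cap C]$ governing $\Theta_G$ and $\Theta_H$. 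Moreover, since each $\varphi_B$ is a restriction of the single morphism $\varphi$, the $\varphi_B$ automatically agree on overlaps: for $x\in\mG[B\cap C]$ one has $\varphi_B(x)=\varphi(x)=\varphi_C(x)$. Combining these, two elements are $\Theta_G$-related iff they agree in some $\mG[B\cap C]$, iff their $\varphi$-images agree in $\mH[B\cap C]$ (using injectivity of $\varphi$ on $G[B\cap C]$ for the reverse implication), iff the images are $\Theta_H$-related. Hence $\Phi$ descends to an isomorphism of quotients, which is the required isomorphism $\sfCL(G[A],\bP)\cong\sfCL(H[A],\bP)$.

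I expect the compatibility check in the last paragraph to be the only real obstacle: everything else is bookkeeping, but showing that the separately defined isomorphisms $\varphi_B$ genuinely glue — i.e.\ that $\Phi$ identifies $\Theta_G$ with $\Theta_H$ — is precisely where both hypotheses are consumed, retractability to pin down the overlaps via $2$-acyclicity and $(k-1)$-stability both to obtain the $\varphi_B$ as isomorphisms and to realize them all as restrictions of one morphism $\varphi$. A minor point to record along the way is that $(k-1)$-stability must first be upgraded to stability on \emph{all} subsets of size at most $k-1$, which is immediate from $G[B]\subseteq G[A']$ for $B\subseteq A'$.
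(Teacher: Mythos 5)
Your proof is correct and takes essentially the same route as the paper: the paper's own proof just invokes the quotient description $\sfCL(G[A],\bP)\cong\bigl(\bigsqcup_{B\in\bP}\mG[B]\bigr)/\Theta$ and notes that $(k-1)$-stability yields $G[C]\cong H[C]$ for every $C\in\bP$ since $|C|<k$. You merely spell out the bookkeeping the paper leaves implicit, namely that these isomorphisms, all being restrictions of the single canonical morphism $G\twoheadrightarrow H$, agree on the overlaps $\mG[B\cap C]$ and therefore carry $\Theta_G$ onto $\Theta_H$.
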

\begin{proof} This follows from the above discussion since
  $(k-1)$-stability implies that  $G[C]\cong H[C]$ for all $C\in \bP$ as $|C|<|A| = k$.
\end{proof}
We next analyse the structure of $B$-components of $A$-clusters for
$B\subsetneq  A$.
Let $\mathbb{P}=\{A_1,\dots,A_k\}$ be a set of proper subsets of $A$ and let $B\subsetneq A$; let $v\in G[A]$ and $v\mathcal{G}[B]$ be the $B$-component of $v$ in $\mathcal{G}[A]$.  Then
\[\mathsf{CL}(G[A],\mathbb{P})\cap v\mathcal{G}[B] =\bigcup_{i=1}^k(\mathcal{G}[A_i]\cap v\mathcal{G}[B]).\]
The intersection $\mathcal{G}[A_i]\cap v\mathcal{G}[B]$ is either empty or a $(B\cap A_i)$-coset $v_i\mathcal{G}[B\cap A_i]$ for some (any) $v_i\in \mathcal{G}[A_i]\cap v\mathcal{G}[B]$. {In order to describe the structure of the intersection $\mathsf{CL}(G[A],\mathbb{P})\cap v\mG[B]$ of a cluster $\mathsf{CL}(G[A],\mathbb{P})$ with a coset graph $v\mG[B]$ we may ignore the constituent cosets $\mG[A_i]$ of $\mathsf{CL}(G[A],\mathbb{P})$ having empty intersection with $v\mG[B]$. Hence we} may assume that $\mathcal{G}[A_i]\cap v\mathcal{G}[B]\ne \varnothing$ for every $i$.
\begin{Lemma}\label{lem:3-acyclic} If $\mathcal{G}[A_i]\cap v\mathcal{G}[B]\ne\varnothing$ for $i=1,\dots,k$ then
\[\mathcal{G}[A_1]\cap\cdots\cap\mathcal{G}[A_k]\cap v\mathcal{G}[B]\ne\varnothing.\]
\end{Lemma}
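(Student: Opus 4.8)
The plan is to translate everything into the language of cosets in the Cayley graph $\mathcal{G}$ of $G[A]$ and argue by induction on $k$, the engine being the retract endomorphisms supplied by $A$-retractability (Proviso~\ref{prov:Aretractable}). Each $\mathcal{G}[A_i]$ is the $A_i$-component of the vertex $1$, that is, the coset $G[A_i]$, while $v\mathcal{G}[B]$ is the coset $vG[B]$. Since all the $\mathcal{G}[A_i]$ contain $1$, repeated use of $2$-acyclicity~\eqref{eq:2acyclic} gives $\mathcal{G}[A_1]\cap\cdots\cap\mathcal{G}[A_k]=\mathcal{G}[A_1\cap\cdots\cap A_k]$, so the assertion to be proved is exactly $\mathcal{G}[D_k]\cap v\mathcal{G}[B]\neq\varnothing$, where $D_k:=A_1\cap\cdots\cap A_k$. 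The case $k=1$ is the hypothesis.

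For the inductive step I would set $D:=A_1\cap\cdots\cap A_{k-1}$ and invoke the induction hypothesis on $A_1,\dots,A_{k-1}$ (each of which still meets $v\mathcal{G}[B]$) to obtain a vertex $z\in\mathcal{G}[D]\cap v\mathcal{G}[B]$; the given hypothesis furnishes a vertex $y\in\mathcal{G}[A_k]\cap v\mathcal{G}[B]$. The key move is to apply the retract endomorphism $\psi_D$ of $G[A]$ onto $G[D]$ (which exists since $G[A]$ is retractable and $D\subseteq A$) to the point $y$. As $z\in v\mathcal{G}[B]$ I may rebase the coset as $v\mathcal{G}[B]=z\mathcal{G}[B]$ and write $y=zb$ with $b\in G[B]$; because $z\in G[D]$ is fixed by $\psi_D$ and $\psi_D(b)\in G[D\cap B]\subseteq G[B]$, I get $\psi_D(y)=z\,\psi_D(b)\in z\mathcal{G}[B]=v\mathcal{G}[B]$. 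On the other hand $y\in G[A_k]$ forces $\psi_D(y)\in G[D\cap A_k]$, and of course $\psi_D(y)\in\operatorname{im}\psi_D=G[D]$. Hence $\psi_D(y)$ lies in $\mathcal{G}[D\cap A_k]\cap v\mathcal{G}[B]=\mathcal{G}[D_k]\cap v\mathcal{G}[B]$, which closes the induction.

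I expect the main obstacle to be precisely the verification that applying $\psi_D$ does not push us out of the coset $v\mathcal{G}[B]$: this is what forces the use of the induction-hypothesis vertex $z$, since it lets $v\mathcal{G}[B]$ be rebased at a vertex ($z$) that $\psi_D$ fixes; everything else is routine bookkeeping of the contents of words under the retract. Finally, I would observe that the base instance $k=2$ of this argument is, after a left translation (the left $G[A]$-action being by automorphisms of $\mathcal{G}$), exactly the $3$-acyclicity condition~\eqref{eq:3acyclic}: three pairwise meeting cosets $\mathcal{G}[A_1],\mathcal{G}[A_2],v\mathcal{G}[B]$ are shown to share a vertex. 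Thus $3$-acyclicity is established \emph{in passing}, as announced before Remark~\ref{rem:2and3acyc}, and the general statement is simply its iteration.
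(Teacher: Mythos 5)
Your proposal is correct and takes essentially the same route as the paper: an induction on the number of constituent cosets in which retractability of $G[A]$ produces an explicit witness vertex for the enlarged intersection at each step. The only (cosmetic) difference is that the paper realises the retraction at the level of words --- deleting the letters outside $B$ from words $p,q$ representing $u^{-1},w^{-1}$ and taking the witness $x=u\cdot p_1=w\cdot q_1$ --- whereas you apply the retract endomorphism $\psi_D$ onto the accumulated intersection directly to the group element $y$; in both arguments the base instance with two cosets is exactly the $3$-acyclicity condition~\eqref{eq:3acyclic}, as the paper also points out.
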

\begin{proof} Let $t\le k$ and assume that we have already proved that
\[\big(\bigcap_{i=1}^{t-1}\mathcal{G}[A_i]\big) \cap v\mathcal{G}[B]\ne\varnothing.\] In order to simplify the notation we set $C:=A_1\cap\cdots\cap A_{t-1}$ and $D:=A_t$; then $\bigcap_{i=1}^{t-1}\mathcal{G}[A_i] =\mG[C]$.
{The situation is depicted in Figure~\ref{fig:3acyclicinproofoflemma} and the reader is invited to 
consult this illustration for the following argument.
We need to exhibit an element in $\mathcal{G}[C\cap D]\cap v\mathcal{G}[B]$.}
So, let $u\in \big(\bigcap_{i=1}^{t-1}\mathcal{G}[A_i]\big) \cap v\mathcal{G}[B] =\mathcal{G}[C]\cap v\mathcal{G}[B]$ and $w\in \mathcal{G}[D]\cap v\mathcal{G}[B]$.
\begin{figure}[ht]
\begin{tikzpicture}[scale=0.6]
\filldraw(-3,-1)circle(2.5pt);
\draw(-2.9,-0.9)node[below left]{$1$};
\filldraw(0,0.5)circle(2.5pt);
\draw(0,0.5)node[left]{$x$};
\filldraw(3,-1)circle(2.5pt);
\draw(3,-0.9)node[right]{$w$};
\filldraw(1,3)circle(2.5pt);
\draw(1,3)node[above]{$u$};
\draw[-latex][thick](1,3)--(-2.9,-0.9);
\draw[-latex][thick](1,3)--(3,-0.9);
\draw[-latex][thick](1,3)--(0.1,0.6);
\draw[-latex][thick](3,-1)--(0.1,0.4);
\draw[-latex][thick](3,-1)--(-2.9,-1);
\draw(-1,1.3)node{$p$};
\draw(0.8,1.3)node{$p_1$};
\draw(0,-0.9)node[below]{$q$};
\draw(2.3,1.3)node{$r$};
\draw(1.7,0)node{$q_1$};
\draw plot [smooth cycle] coordinates {(0.5,3.5) (1.5,3.4)(2.7,1.3) (4,-1) (2,-1)(-0.8,0)};
\draw plot [smooth cycle] coordinates {(-4,-1.5)(0,-0.3)(1.5,3.5)(-5,3)};
\draw plot [smooth cycle] coordinates {(-4,-1) (0,1)(4,-1)(1,-1.8)(-3,-2)};
\draw(0,-1.8)node[below]{$\mathcal{G}[D]$};
\draw(3.3,2)node{$v\mathcal{G}[B]$};
\draw(-2.2,2.9)node{$\mathcal{G}[C]$};
\end{tikzpicture}
\caption{Configuration as in the proof of Lemma~\ref{lem:3-acyclic}}
 \label{fig:3acyclicinproofoflemma}
\end{figure}
Let $p\in \til{C}^*$ be such that $[p]_G=u^{-1}$ and $q\in \til{D}^*$ be such that $[q]_G=w^{-1}$; moreover, let $r\in\til{B}^*$ be a word which labels a path $u\longrightarrow w$ running entirely in $v\mathcal{G}[B]$ (recall that all this happens in $\mathcal{G}[A]$). Let $p_1$ and $q_1$ be, respectively, the words obtained from $p$ and $q$ by deletion of all letters not in $B$. Since $[pq^{-1}]_G=[r]_G$ and $r\in \til{B}^*$, we have $[p_1q_1^{-1}]_G=[r]_G$, by retractability.
Let $x:=u\cdot p_1=w\cdot q_1$. Then $p^{-1}p_1$ labels a path
$1\longrightarrow x$ and so does $ q^{-1}q_1$. Since $p^{-1}p_1\in
\til{C}^*$ and $q^{-1}q_1\in
\til{D}^*$,  it follows that $x\in \mathcal{G}[C]\cap \mathcal{G}[D]=\mathcal{G}[C\cap D]$. From $x=u\cdot p_1$ and $p_1\in \til{B}^*$ it follows that
$x\in u\mathcal{G}[B]=v\mathcal{G}[B]$,
altogether
$x\in \mathcal{G}[C\cap D]\cap v\mathcal{G}[B]$.
\end{proof}

The proof of Lemma~\ref{lem:3-acyclic} implicitly shows that
retractable groups
are $3$-acyclic in the sense of condition~(\ref{eq:3acyclic}),
as stated in Remark~\ref{rem:2and3acyc}.
(Compare Figure~\ref{fig:2/3-acyclic} for coset patterns that are 
ruled out in the Cayley graph $\mathcal{G}$ of any 
$E$-group $G$ that is retractable; here now, the cosets in question,  
$1 \mathcal{G}[C]$, $1\mathcal{G}[{D}]$ and $v\mathcal{G}[B]$,
have $x$ in their intersection,
as indicated in Figure~\ref{fig:3acyclicinproofoflemma}.)

\medskip
In the situation of the proof of Lemma~\ref{lem:3-acyclic}   we consider the automorphism of $\mathcal{G}$ induced by left
multiplication by $x^{-1}$ for some $x\in G[A_1]\cap\cdots\cap
G[A_k]\cap vG[B]$. {Then $\mG[A_i]= x\inv\mG[A_i]$ for all $i$, and $x\inv v\mG[B]=\mG[B]$, so that 
\begin{equation}\label{eq:intersection coset cluster}
\begin{array}{r@{\;\;=\;\;}ll}    
x\inv\big(\mathsf{CL}(G[A],\mathbb{P})\cap v\mathcal{G}[B]\big) &
\bigcup_{i=1}^k(\mathcal{G}[A_i]\cap \mathcal{G}[B])
\\
\hnt
& \bigcup_{i=1}^k\mathcal{G}[A_i\cap B] \; = \; \mathsf{CL}(G[B],\mathbb{O})
\end{array}
\end{equation}
where
$\mathbb{O}=\{B\cap A_i\colon A_i\in \mathbb{P}\}$} (some of the sets
$B\cap A_i$ may be empty), which perhaps degenerates to a full $B$-coset. 
This allows us to characterise the $B$-components of $A$-clusters for $B\subsetneq A$.
\begin{Cor}\label{cor:Bcomponentscluster} Let $\mathbb{P}$ be a set of proper subsets of $A$ and $B\subsetneq A$. Then every $B$-component of the cluster $\mathsf{CL}(G[A],\mathbb{P})$ is either a $B$-coset, that is, isomorphic with $\mathcal{G}[B]$, or isomorphic with the $B$-cluster $\mathsf{CL}(G[B],\mathbb{O})$ where $\mathbb{O}=\{C\cap B\colon C\in\mathbb{P}\}$ (some $C\cap B$ may be empty).
\end{Cor}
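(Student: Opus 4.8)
The plan is to read the corollary off from the analysis of the intersection $\mathsf{CL}(G[A],\mathbb{P})\cap v\mathcal{G}[B]$ carried out in the paragraphs preceding the statement. Fix a $B$-component of the cluster and represent it via a vertex $v$ of the cluster, where $v\mathcal{G}[B]$ denotes the ambient $B$-component of $v$ in $\mathcal{G}[A]$; the component of the cluster is then contained in $\mathsf{CL}(G[A],\mathbb{P})\cap v\mathcal{G}[B]$. Writing $\mathbb{P}=\{A_1,\dots,A_k\}$, the displayed identity $\mathsf{CL}(G[A],\mathbb{P})\cap v\mathcal{G}[B]=\bigcup_{i=1}^{k}(\mathcal{G}[A_i]\cap v\mathcal{G}[B])$ reduces everything to the individual pieces. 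First I would discard the constituents with $\mathcal{G}[A_i]\cap v\mathcal{G}[B]=\varnothing$, so that $\mathbb{O}$ is implicitly taken over the constituents that actually meet the chosen $B$-coset (this reduction is essential: a constituent disjoint from $v\mathcal{G}[B]$ must not contribute a coset to the component); by $2$-acyclicity \eqref{eq:2acyclic} each surviving piece is a single $(A_i\cap B)$-coset.

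The key step is to show that these pieces are glued together exactly as in a $B$-cluster, i.e.\ that they all pass through one common vertex. This is precisely what Lemma~\ref{lem:3-acyclic} delivers: it produces a vertex $x\in\mathcal{G}[A_1]\cap\cdots\cap\mathcal{G}[A_k]\cap v\mathcal{G}[B]$. I would then left-multiply by $x^{-1}$, an automorphism of $\mathcal{G}$ (and of $\mathcal{G}[A]$) carrying $x$ to $1$. Since $x\in G[A_i]$ for every $i$, this automorphism fixes each coset graph $\mathcal{G}[A_i]$ setwise, and since $v\mathcal{G}[B]=x\mathcal{G}[B]$ it sends the $B$-coset onto $\mathcal{G}[B]$. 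Hence the component is carried isomorphically onto $\bigcup_i(\mathcal{G}[A_i]\cap\mathcal{G}[B])$, and applying \eqref{eq:2acyclic} once more gives $\mathcal{G}[A_i]\cap\mathcal{G}[B]=\mathcal{G}[A_i\cap B]$. Thus the component is isomorphic to $\bigcup_i\mathcal{G}[A_i\cap B]=\mathsf{CL}(G[B],\mathbb{O})$ with $\mathbb{O}=\{A_i\cap B\}$, which is the asserted $B$-cluster; the same common vertex $x$ also certifies that $\mathsf{CL}(G[A],\mathbb{P})\cap v\mathcal{G}[B]$ is $B$-connected, so it really is a single $B$-component.

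It remains to peel off the degenerate alternative. If some surviving $A_i$ satisfies $B\subseteq A_i$, then $A_i\cap B=B$ and $\mathcal{G}[B]\subseteq\mathcal{G}[A_i]$, so the union is the full coset $\mathcal{G}[B]$ and the component is a $B$-coset; otherwise every $A_i\cap B$ is a proper subset of $B$ and $\mathsf{CL}(G[B],\mathbb{O})$ is a genuine $B$-cluster, where empty intersections $A_i\cap B=\varnothing$ contribute only the basepoint $1$ and may be ignored. I expect the only real content to be the appeal to Lemma~\ref{lem:3-acyclic}, i.e.\ to $3$-acyclicity, in the middle paragraph: without a common vertex through which all surviving pieces pass, the intersection would be merely a union of cosets meeting $v\mathcal{G}[B]$ in an a priori uncontrolled configuration, and it would not be clear that it assembles into a cluster erected over the single basepoint $1$. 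Everything else is bookkeeping via the $2$-acyclic identity \eqref{eq:2acyclic} and the translation automorphism.
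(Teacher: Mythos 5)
Your proposal is correct and follows essentially the same route as the paper: the paper likewise decomposes $\mathsf{CL}(G[A],\mathbb{P})\cap v\mathcal{G}[B]$ into the pieces $\mathcal{G}[A_i]\cap v\mathcal{G}[B]$, discards the empty ones, invokes Lemma~\ref{lem:3-acyclic} to obtain a common vertex $x$, translates by $x^{-1}$ to identify the intersection with $\mathsf{CL}(G[B],\mathbb{O})$, and then uses connectedness of that cluster to conclude the $B$-component equals the whole intersection. Your side remarks (the necessity of removing constituents disjoint from $v\mathcal{G}[B]$, and the degenerate $B$-coset case when $B\subseteq A_i$) match the paper's own caveats in the discussion preceding the corollary.
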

\begin{proof} The intersection $\mathsf{CL}(G[A],\mathbb{P})\cap v\mathcal{G}[B]$ is either the $B$-coset $v\mathcal{G}[B]$ itself (if it is contained in some constituent $\mathcal{G}[C]$ with $C\in \bP$) or otherwise is isomorphic with the $B$-cluster $\mathsf{CL}(G[B],\mathbb{O})$, as indicated {in~\eqref{eq:intersection coset cluster}}. Now let $v$ be a vertex of $\mathsf{CL}(G[A],\mathbb{P})$; then the $B$-component $\mathcal{B}$ of $v$ in $\mathsf{CL}(G[A],\mathbb{P})$ is certainly contained in $\mathsf{CL}(G[A],\mathbb{P})\cap v\mathcal{G}[B]$. Since the latter intersection is a $B$-cluster, it is connected and therefore $\mathcal{B}$ must coincide with this intersection.
\end{proof}
\begin{Cor}\label{cor:BcapC for clusters} Let $B,C\subsetneq A$; then the intersection $\mathcal{B}\cap\mathcal{C}$ of a $B$-component $\mathcal{B}$ with a $C$-component $\mathcal{C}$ of an $A$-cluster $\mathsf{CL}$ is either empty or a $B\cap C$-coset or a $(B\cap C)$-cluster.
\end{Cor}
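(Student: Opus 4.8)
The plan is to reduce Corollary~\ref{cor:BcapC for clusters} to Corollary~\ref{cor:Bcomponentscluster} applied to both $B$ and $C$, and then analyse the intersection of the resulting pieces case by case. First I would invoke Corollary~\ref{cor:Bcomponentscluster} twice: the $B$-component $\mathcal{B}$ is either a full $B$-coset $v\mathcal{G}[B]$ or a $B$-cluster, and likewise the $C$-component $\mathcal{C}$ is either a full $C$-coset $w\mathcal{G}[C]$ or a $C$-cluster. In every case, both $\mathcal{B}$ and $\mathcal{C}$ are subgraphs of $\mathcal{G}[A]$ that are themselves unions of coset graphs: a $B$-cluster is a union of $(B\cap A_i)$-cosets and a $C$-cluster is a union of $(C\cap A_j)$-cosets, all with $A_i,A_j\in\mathbb{P}$. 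Thus $\mathcal{B}\cap\mathcal{C}$ is a union of intersections of the form $u_i\mathcal{G}[B\cap A_i]\cap w_j\mathcal{G}[C\cap A_j]$.

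The key computational input is $2$-acyclicity, condition~\eqref{eq:2acyclic} together with its coset reformulation (recorded in Remark~\ref{rem:2and3acyc}): since $G[A]$ is retractable, any non-empty intersection of two cosets $u\mathcal{G}[X]\cap w\mathcal{G}[Y]$ inside $\mathcal{G}[A]$ is again a coset, namely of the form $z\mathcal{G}[X\cap Y]$. Applying this to each piece, every non-empty $u_i\mathcal{G}[B\cap A_i]\cap w_j\mathcal{G}[C\cap A_j]$ is a $(B\cap C\cap A_i\cap A_j)$-coset. Since $A_i,A_j\subseteq A$, each such label is a proper subset of $B\cap C$ (or equals $B\cap C$ in the degenerate case where the full cosets occur). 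Hence $\mathcal{B}\cap\mathcal{C}$ is a union of cosets of subgraphs of $\mathcal{G}[B\cap C]$, all sharing the vertex set $\mathcal{B}\cap\mathcal{C}\cap (B\cap C)\text{-coset}$ appropriately. Collecting the surviving labels $\mathbb{O}'=\{B\cap C\cap A_i\cap A_j\}$, the intersection is exactly a $(B\cap C)$-cluster $\mathsf{CL}(G[B\cap C],\mathbb{O}')$, which degenerates to a full $B\cap C$-coset when one of the constituents already covers it, and is empty when all the individual intersections vanish.

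I expect the main obstacle to be bookkeeping the base vertices: the various coset pieces of $\mathcal{B}$ are translates from a common vertex $v$ while those of $\mathcal{C}$ are translates from $w$, and one must verify that after intersecting, all surviving pieces indeed hang off a single common vertex so that their union genuinely forms one cluster (is connected) rather than several disjoint coset graphs. This is precisely where one uses that $\mathcal{B}\cap\mathcal{C}$, being contained in a single $(B\cap C)$-coset, lies in one connected $(B\cap C)$-component; translating by the inverse of a chosen vertex in the intersection (exactly as in the proof of Corollary~\ref{cor:Bcomponentscluster}) normalises the situation so that the surviving constituents all pass through~$1$. Once the base points are aligned, the identification of $\mathcal{B}\cap\mathcal{C}$ with $\mathsf{CL}(G[B\cap C],\mathbb{O}')$ follows from the same assembly-by-congruence description of clusters used before, so the argument closes without further machinery.
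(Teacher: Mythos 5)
Your overall strategy---decompose $\mathcal{B}$ and $\mathcal{C}$ into their constituent cosets, intersect piecewise, and identify each non-empty piece as a coset via $2$-acyclicity---is sound up to the point you yourself flag as the main obstacle, and it is exactly there that the argument has a genuine gap. Knowing that $\mathcal{B}\cap\mathcal{C}$ is contained in a single coset $u\mathcal{G}[B\cap C]$ (which does follow from $2$-acyclicity applied to the ambient cosets $v\mathcal{G}[B]$ and $w\mathcal{G}[C]$) does not make the union of the surviving pieces connected: a coset is a connected graph, but a union of sub-cosets inside it can perfectly well be disconnected. Likewise, translating by the inverse of a chosen vertex $z\in\mathcal{B}\cap\mathcal{C}$ only normalises the one piece that happens to contain $z$; it does not put the other surviving pieces through $1$. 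In the discussion preceding Corollary~\ref{cor:Bcomponentscluster} the alignment of base points is \emph{not} obtained by translation alone but by Lemma~\ref{lem:3-acyclic} ($3$-acyclicity), which produces a vertex lying simultaneously in \emph{all} constituent cosets and the ambient coset; your proposal never invokes that lemma or anything equivalent, so the assertion that ``the argument closes without further machinery'' is precisely where it fails---as written, the reasoning is circular, since connectivity of $\mathcal{B}\cap\mathcal{C}$ is part of what must be proved.

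The gap is reparable within your framework: each surviving piece equals
$\mathcal{G}[A_i]\cap v\mathcal{G}[B]\cap \mathcal{G}[A_j]\cap w\mathcal{G}[C]
=\mathcal{G}[A_i\cap A_j]\cap u\mathcal{G}[B\cap C]$
(using retractability twice), so Lemma~\ref{lem:3-acyclic} applied to the subgroups $\mathcal{G}[A_i\cap A_j]$ (over the surviving pairs $(i,j)$) and the coset $u\mathcal{G}[B\cap C]$ yields a common vertex $z$ through which every piece passes, whence their union is the translate by $z$ of the cluster $\mathsf{CL}(G[B\cap C],\mathbb{O}')$ with $\mathbb{O}'=\{A_i\cap A_j\cap B\cap C\}$. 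The paper avoids the double decomposition altogether: from the proof of Corollary~\ref{cor:Bcomponentscluster} one has $\mathcal{B}=\mathsf{CL}\cap v\mathcal{G}[B]$ and $\mathcal{C}=\mathsf{CL}\cap w\mathcal{G}[C]$, hence
$\mathcal{B}\cap\mathcal{C}=\mathsf{CL}\cap\bigl(v\mathcal{G}[B]\cap w\mathcal{G}[C]\bigr)=\mathsf{CL}\cap u\mathcal{G}[B\cap C]$,
and the already-established analysis of the intersection of a cluster with a \emph{single} coset (where Lemma~\ref{lem:3-acyclic} was used once and for all) finishes the proof in one line.
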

\begin{proof}
{By Corollary~\ref{cor:Bcomponentscluster}}, $\mathcal{B}=\mathsf{CL}\cap v\mathcal{G}[B]$ and
$\mathcal{C}=\mathsf{CL}\cap w\mathcal{G}[C]$ for some cosets
$v\mathcal{G}[B]$ and $w\mathcal{G}[C]$.
The latter two have either empty intersection or their intersection is a $(B\cap C)$-coset $u\mathcal{G}[B\cap C]$ from which the claim follows.
\end{proof}

We will need a generalisation of clusters, which  we are going to
present next. Let {$G[A]$ be again} retractable (Proviso~\ref{prov:Aretractable}),
$\mathbb{P}$ be a set of proper subsets of $A$, $v$ be a vertex of $\mathsf{CL}(G[A],\mathbb{P})$ and $B\subsetneq A$. Under these assumptions we define
\[\mathsf{CL}(G[A],\mathbb{P})\circv \mathcal{G}[B]:= \bigcup_{C\in \mathbb{P}}\mathcal{G}[C]\cup v\mathcal{G}[B]\]
considered as a subgraph of $\mathcal{G}[A]$ and call the latter graph a 
\emph{$B$-augmented $A$-cluster} or, more specifically, the \emph{$B$-augmentation of $\sfCL(G[A],\bP)$  at $v$}. We have seen 
{in Corollary~\ref{cor:Bcomponentscluster} } that the intersection $\mathsf{CL}(G[A],\mathbb{P})\cap v\mathcal{G}[B]$ is a $B$-component of $\mathsf{CL}(G[A],\mathbb{P})$. 
It follows that the structure of the graph
$\mathsf{CL}(G[A],\mathbb{P})\circv \mathcal{G}[B]$ only depends on
the collection $\{G[C]\colon C\in \mathbb{P}\}$, the vertex $v$ and
$G[B]$ rather than on the entire group $G[A]$. Indeed, {as can be seen in \eqref{eq: cluster},} the structure of $\mathsf{CL}(G[A],\mathbb{P})$ depends only
on the graphs $\mathcal{G}[C]$ for $C\in \mathbb{P}${; furthermore, 
by Corollary~\ref{cor:Bcomponentscluster},} the
$B$-component of $v$ is a certain $B$-cluster $\mathcal{B}$, which is
isomorphic with a 
subgraph of $\mathcal{G}[B]$ via the
monomorphism $\iota\colon \mathcal{B}\to \mathcal{G}[B]$ determined
by $v\mapsto 1$. {(We may neglect the trivial case 
in Corollary~\ref{cor:Bcomponentscluster}, namely that $\mB=\mG[B]$:  in that case, the augmented cluster would coincide with the original one.)}
The augmented cluster $\mathsf{CL}(G[A],\mathbb{P})\circv
\mathcal{G}[B]$ can then be obtained as the disjoint union of
$\mathsf{CL}(G[A],\mathbb{P})$ and $\mathcal{G}[B]$ factored by the
congruence whose non-singleton classes are $\{x,\iota(x)\}$ for all
$x\in \mathcal{B}$ ($x$ an edge or a vertex).
As a consequence we obtain the following lemma, whose proof is
analogous to the proof of Lemma~\ref{lem:small cosets 1}; it will
similarly be used in the proof of
Proposition~\ref{prop:k-stabilityH_k--->}.  

\begin{Lemma}\label{lem:small cosets 2}  Let $G\twoheadrightarrow H$ be a $(k-1)$-stable expansion between k-retractable $E$-groups $G$ and $H$,
 $\varphi$ the associated canonical morphism. Then, for any $A\subseteq E$ with $|A|=k$, any set $\bP$ of proper subsets of $A$, any $B\subsetneq A$ and any vertex $u$ of $\sfCL(G[A],\bP)$ with $v:=\varphi(u)$ there is an isomorphism of labelled graphs
\[\sfCL(G[A],\bP)\circu\mG[B]\cong \sfCL(H[A],\bP)\circv\mH[B].\]
\end{Lemma}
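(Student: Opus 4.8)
The plan is to exhibit both augmented clusters as amalgams of their two constituent blocks, glued along a common $B$-component, and then to check that one single map --- the canonical morphism $\varphi$ itself --- induces the required isomorphism on both blocks simultaneously, so that the only delicate point, compatibility on the overlap, becomes automatic. Recall from the discussion preceding the statement that $\sfCL(G[A],\bP)\circu\mG[B]$ arises from $\sfCL(G[A],\bP)\sqcup\mG[B]$ by identifying, through the monomorphism $\iota\colon\mB\to\mG[B]$ determined by $u\mapsto 1$, the $B$-component $\mB$ of $u$ in $\sfCL(G[A],\bP)$ with its image; the graph $\sfCL(H[A],\bP)\circv\mH[B]$ admits the analogous description, with the $B$-component $\mB'$ of $v$ in $\sfCL(H[A],\bP)$ and the monomorphism $\iota'\colon\mB'\to\mH[B]$ determined by $v\mapsto 1$.

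First I would produce the two block isomorphisms, both realised as restrictions of $\varphi$. Arguing exactly as in the proof of Lemma~\ref{lem:small cosets 1}, $(k-1)$-stability gives $G[C]\cong H[C]$ for every $C\in\bP$ (as $|C|\le k-1$), so that $\varphi$ restricts to an isomorphism $\Phi\colon\sfCL(G[A],\bP)\to\sfCL(H[A],\bP)$ of labelled graphs; since $\Phi$ sends $u$ to $v=\varphi(u)$, it carries $\mB$ isomorphically onto $\mB'$. Because $|B|\le k-1$ as well, $(k-1)$-stability likewise makes $\varphi$ restrict to an isomorphism $\Psi\colon\mG[B]\to\mH[B]$.

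The hard part --- indeed the only point that really needs checking --- is that $\Phi$ and $\Psi$ agree along the gluing, i.e.\ that $\Psi\circ\iota=\iota'\circ(\Phi|_{\mB})$; this is where passing through the \emph{same} homomorphism $\varphi$ pays off. For a vertex $x\in\mB$ we have $\iota(x)=u^{-1}x$, hence
\[\Psi(\iota(x))=\varphi(u^{-1}x)=\varphi(u)^{-1}\varphi(x)=v^{-1}\varphi(x),\]
using that $\varphi$ is a homomorphism with $\varphi(u)=v$, whereas $\Phi(x)=\varphi(x)\in\mB'$ gives $\iota'(\Phi(x))=v^{-1}\varphi(x)$ as well (the computation on edges being identical). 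Thus the two amalgamation data correspond under $\Phi$ and $\Psi$, and $\Phi\sqcup\Psi$ descends to the desired isomorphism
\[\sfCL(G[A],\bP)\circu\mG[B]\cong\sfCL(H[A],\bP)\circv\mH[B].\]
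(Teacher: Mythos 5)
Your proof is correct and takes essentially the same route as the paper: the paper's own proof is a one-line remark that the claim follows, exactly as in Lemma~\ref{lem:small cosets 1}, from the amalgam description of the augmented cluster given in the discussion preceding the statement together with $(k-1)$-stability (which gives $G[C]\cong H[C]$ for $C\in\bP$ and $G[B]\cong H[B]$). Your write-up merely makes explicit what the paper leaves implicit, namely that both block isomorphisms are restrictions of the single canonical morphism $\varphi$ and therefore automatically agree on the glued $B$-component.
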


As the last result in this subsection we need to clarify, for 
$B,C\subsetneq A$, the structure of $C$-components of $B$-augmented $A$-clusters. These turn out to be $(B\cap C)$-augmented
$C$-clusters. As noticed in Corollary~\ref{cor:Bcomponentscluster},
every $C$-component {of} an $A$-cluster is a $C$-cluster (or a $C$-coset).
\begin{Cor}\label{cor:CcomponentBexpandedcluster} Let $B,C\subsetneq A$ and let $G[A]$ be retractable; then every $C$-component of a $B$-augmented
$A$-cluster is a $(B\cap C)$-augmented
 $C$-cluster (which includes $C$-clusters as a special case).
\end{Cor}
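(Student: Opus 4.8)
The plan is to describe a $C$-component of the $B$-augmented cluster $\sfCL(G[A],\bP)\circv\mG[B]$ as an intersection with a single $C$-coset $w\mathcal{G}[C]$, and then to analyse that intersection constituent-by-constituent using the machinery already established. Fix a vertex $w$ of the augmented cluster and write $w\mathcal{G}[C]$ for its $C$-component in $\mathcal{G}[A]$. Since $\sfCL(G[A],\bP)\circv\mG[B]=\bigcup_{D\in\bP}\mathcal{G}[D]\cup v\mathcal{G}[B]$, distributing the intersection over the union gives
\[
\big(\sfCL(G[A],\bP)\circv\mG[B]\big)\cap w\mathcal{G}[C]=\bigcup_{D\in\bP}\big(\mathcal{G}[D]\cap w\mathcal{G}[C]\big)\;\cup\;\big(v\mathcal{G}[B]\cap w\mathcal{G}[C]\big).
\]
As in the proof of Corollary~\ref{cor:Bcomponentscluster}, the $C$-component of $w$ in the augmented cluster is connected and hence coincides with this whole intersection, so it suffices to identify the right-hand side.

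The key observation is that each summand is a coset of a smaller subgroup, by exactly the $2$-acyclicity argument of \eqref{eq:2acyclic}. Each intersection $\mathcal{G}[D]\cap w\mathcal{G}[C]$ is either empty or a $(D\cap C)$-coset, and the extra term $v\mathcal{G}[B]\cap w\mathcal{G}[C]$ is either empty or a $(B\cap C)$-coset. After discarding the empty intersections and applying the translation automorphism of $\mathcal{G}$ that moves a chosen common vertex of the surviving cosets to $1$ (available by Lemma~\ref{lem:3-acyclic} applied to the retractable group $G[A]$, which guarantees the surviving constituents meet $w\mathcal{G}[C]$ in a common point), one can normalise so that $w\mathcal{G}[C]=\mathcal{G}[C]$ and the translated term $v\mathcal{G}[B]$ becomes a genuine $B$-coset $v'\mathcal{G}[B]$ inside $\mathcal{G}[C]$. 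Under this normalisation the union above becomes
\[
\bigcup_{D\in\bP}\mathcal{G}[D\cap C]\;\cup\;v'\mathcal{G}[B\cap C],
\]
which is precisely $\sfCL(G[C],\mathbb{O})\circ v'\,\mathcal{G}[B\cap C]$ for $\mathbb{O}=\{D\cap C\colon D\in\bP\}$, namely a $(B\cap C)$-augmentation of the $C$-cluster $\sfCL(G[C],\mathbb{O})$. This is the desired conclusion, and the special cases (where the $B$-augmenting term is absorbed into a constituent, or where only one constituent survives) degenerate exactly to the $C$-clusters and $C$-cosets already covered by Corollary~\ref{cor:Bcomponentscluster}.

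The main obstacle, and the point deserving care, is justifying the simultaneous normalisation: one must argue that whenever several of the summands $\mathcal{G}[D]\cap w\mathcal{G}[C]$ together with $v\mathcal{G}[B]\cap w\mathcal{G}[C]$ are nonempty, they share a common vertex, so that a single left-translation realises all of them as cosets through $1$ inside $\mathcal{G}[C]$. This is exactly where the $3$-acyclicity established inside the proof of Lemma~\ref{lem:3-acyclic} does the work; I would invoke it (for the retractable group $G[A]$) to produce the common vertex $x$ lying in $\bigcap_D\mathcal{G}[D\cap C]\cap v\mathcal{G}[B\cap C]$, rather than re-deriving the coset intersection facts. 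The remaining identifications are then formal applications of $G[D]\cap G[C]=G[D\cap C]$ (equation~\eqref{eq:2acyclic}) and the definition of the augmented cluster, so no further genuinely new argument is required.
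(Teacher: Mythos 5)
Your decomposition of the intersection into coset summands is fine (that is just retractability and \eqref{eq:2acyclic}), but the step that carries all the weight --- producing ``the common vertex $x$ lying in $\bigcap_D\mathcal{G}[D\cap C]\cap v\mathcal{G}[B\cap C]$'' from Lemma~\ref{lem:3-acyclic} --- is a misapplication of that lemma, and the asserted common vertex does not exist in general. Lemma~\ref{lem:3-acyclic} concerns the constituents $\mathcal{G}[D]$, which all pass through $1$, intersected with \emph{one} further coset $w\mathcal{G}[C]$; your configuration involves a \emph{second} coset $v\mathcal{G}[B]$, which need not pass through the core, and nothing forces it to. Concretely, in the free group on $E=\{a,b,c,d,e\}$ (a retractable $E$-group), take $A=E$, $\bP=\{\{a,b,e\},\{b,c\}\}$, augment by $B=\{a,d\}$ at $v=be$, and intersect with $w\mathcal{G}[C]$ for $C=\{a,c,d,e\}$ and $w=b$: the three surviving summands are $b\mathcal{G}[\{a,e\}]$, $b\mathcal{G}[c]$ and $be\mathcal{G}[\{a,d\}]$; the first two meet exactly in $\{b\}$, the first and third meet in $be\mathcal{G}[a]$, but the second and third are disjoint, so the triple intersection is empty and no single translation places all summands through $1$. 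This is not a pathology but the very reason augmentation is defined \emph{at a vertex} $v$ that may lie outside the core: the augmenting piece typically attaches to the $C$-cluster away from its core. Consequently your normalised picture $\bigcup_D\mathcal{G}[D\cap C]\cup v'\mathcal{G}[B\cap C]$ is an augmented cluster only if $v'$ can be chosen inside $\bigcup_D\mathcal{G}[D\cap C]$, which you never prove.

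There is a second, related gap: identifying the $C$-component with the \emph{whole} intersection requires that intersection to be connected, and your only argument for this is the false common-point claim. Connectivity does follow from the coset form of $3$-acyclicity \eqref{eq:3acyclic} when the constituent $\mathcal{G}[D_1]$ containing the augmentation vertex $v$ itself meets $w\mathcal{G}[C]$ (then $\mathcal{G}[D_1]$, $v\mathcal{G}[B]$, $w\mathcal{G}[C]$ pairwise intersect and the augmenting summand meets the cluster part); but if $\mathcal{G}[D_1]\cap w\mathcal{G}[C]=\varnothing$ while some other $\mathcal{G}[D_2]$ meets $w\mathcal{G}[C]$, you face a four-term coset pattern ($\mathcal{G}[D_1]\cap\mathcal{G}[D_2]\ni 1$, $\mathcal{G}[D_2]\cap w\mathcal{G}[C]\ne\varnothing$, $w\mathcal{G}[C]\cap v\mathcal{G}[B]\ne\varnothing$, $v\mathcal{G}[B]\cap\mathcal{G}[D_1]\ni v$, diagonals possibly empty) which the $2$- and $3$-acyclicity that retractability provides (Remark~\ref{rem:2and3acyc}) cannot fill; then the intersection may be disconnected, and the $C$-component is a proper piece of it, so your computation describes the wrong graph. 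The paper's proof avoids both problems by never looking at the ambient intersection: it analyses the component intrinsically, splitting on whether the $C$-component $\mathcal{C}$ of $u$ in the plain cluster meets the $B$-component $\mathcal{B}$ of $v$; if not, augmentation changes nothing and Corollary~\ref{cor:Bcomponentscluster} finishes, and if so, with $w\in\mathcal{C}\cap\mathcal{B}$, the component is exactly $\mathcal{C}\cup w\mathcal{G}[B\cap C]=\mathcal{C}\circw\mathcal{G}[B\cap C]$ by Corollary~\ref{cor:BcapC for clusters}. To repair your route you would need (a) the core point for the cluster summands only, plus a separate attachment argument for the augmenting summand via \eqref{eq:3acyclic}, and (b) a retreat to components where connectivity fails --- at which point you have essentially reconstructed the paper's case distinction.
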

\begin{proof}
Let the group $G$ and $A,B,C\subseteq E$  be as in the statement of the corollary. Let $\mathsf{CL}(G[A],\mathbb{P})\circv \mathcal{G}[B]$ be a $B$-augmentation
of the $A$-cluster $\mathsf{CL}(G[A],\mathbb{P})$ and let $u$ be a
vertex of this  cluster. If the $C$-component $\mathcal{C}$ of $u$ in
$\mathsf{CL}(G[A],\mathbb{P})$ has empty intersection with the
$B$-component $\mathcal{B}$ of $v$ in $\mathsf{CL}(G[A],\mathbb{P})$
then $\mathcal{C}$ coincides with the $C$-component of $u$ in the
augmented cluster and we are done as $\mC$ is a $C$-cluster (or a $C$-coset). Now assume that $\mathcal{C}\cap \mathcal{B}\ne\varnothing$ with $w$ a vertex in $\mathcal{C}\cap \mathcal{B}$. We know that $\mathcal{C}\cap\mathcal{B}$ is a $(C\cap B)$-cluster (Corollary~\ref{cor:BcapC for clusters}) or a $(C\cap B)$-coset and the $C$-component of $w$ within $v\mathcal{G}[B]=w\mathcal{G}[B]$ consists exactly of the coset $w\mathcal{G}[B\cap C]$. It follows that the $C$-component of $w$ in $\mathsf{CL}(G[A],\mathbb{P})\circv\mathcal{G}[B]$ coincides with $\mathcal{C}\cup w\mathcal{G}[B\cap C]=\mathcal{C}\circw\mathcal{G}[B\cap C]$ which is a $(B\cap C)$-augmentation
of the $C$-cluster $\mathcal{C}$.
\end{proof}

\subsubsection{Coset extensions}
\label{subsubsection:cosetextns}
This second construction{, coset extensions}, can be seen as a generalisation of clusters, but is more involved.
{It is a somewhat complex concept but it is perhaps \textsl{the} essential construction of the paper. Its definition will be developed over the next few pages and, in a sense, only culminates in the defining equation~\ref{eq:CEgeneral}; 
but readers should bear in mind that its justification, 
including the non-trivial verification of its well-definedness, 
crucially relies on preparations expounded in the following pages.}

Let us fix  an $E$-group $G$  and a set $A\subseteq E$ of size $|A|\ge
2$. We assume that $G$ is $A$-retractable,
according to Proviso~\ref{prov:Aretractable}.
Let $\mathcal{K}$  be a connected $A$-subgraph of the Cayley graph $\mathcal{G}$ of $G$. We recall that being an $A$-subgraph means that all labels of edges of $\mathcal{K}$ belong to $\til{A}$ (but not necessarily all such letters actually need to occur in $\mathcal{K}$).
For some set $B\subsetneq A$ let $\mathcal{B}=v\mathcal{K}[B]$ be some
$B$-component of $\mathcal{K}$; this graph is embedded in
$v\mathcal{G}[B]\cong \mathcal{G}[B]$. Moreover, for
$B_1,B_2\subsetneq B$ any $B_1$- and $B_2$-components $\mathcal{B}_1$
and $\mathcal{B}_2$ of $\mathcal{B}$ are also  embedded in
$v\mathcal{G}[B]$ via their embedding in $\mB$.

\begin{Def}[admissibility for coset extension] \label{def:admissible} 
  Let $G$  be an $E$-group,  $A\subseteq E$ with $|A|\ge 2$,
  and assume that
  $G$ is $A$-retractable (Proviso~\ref{prov:Aretractable}).
Let $\mathcal{K}$  be a connected $A$-subgraph of the Cayley graph $\mathcal{G}$ of $G$.  Consider all possible choices of subsets $B_1,B_2\subsetneq B\subsetneq A$, of $B$-components $\mB=v\mK[B]$ of $\mK$ and for each pair of vertices $v_1,v_2\in \mB$ all possible $B_1$- and $B_2$-components $\mB_1=v_1\mB[B_1]=v_1\mK[B_1]$ and $\mB_2=v_2\mB[B_2]=v_2\mK[B_2]$. Then $\mathcal{K}$ is \emph{admissible for $\pss{A}$-coset extension} (with respect to $G$) if
\begin{equation}\label{eq:freeness}
\mathcal{B}_1\cap \mathcal{B}_2=\varnothing \mbox{ in }\mathcal{B}\Longrightarrow v_1\mathcal{G}[B_1]\cap v_2\mathcal{G}[B_2]=\varnothing \mbox{ in } v\mathcal{G}[B]\subseteq \mG.
\end{equation}
\end{Def}

 In other words, the patterns depicted in
 Figure~\ref{fig:admissibility}  are forbidden in the context of a
 graph $\mK$ that is admissible for
$\pss{A}$-coset extension
(the right-hand side picture is for the case $B_1=B_2$).
\begin{figure}[ht]
\begin{tikzpicture}[xscale=0.4,yscale=0.6]
\draw (-7.3,-2)--(7,-2);
\draw (-6,-1.9) node [below left]{$\mathcal{K}$};
\draw[thick](-6,-2)--(6,-2);
\draw[very thick](-5,-2)--(-2,-2);
\draw[very thick] (2,-2)--(5,-2);
\draw(-2.8,-1.7)node{$\mathcal{B}_1$};
\draw(2.8,-1.7)node{$\mathcal{B}_2$};
\draw(0,-2.1)node[above]{$\mathcal{B}$};
\draw plot [smooth cycle] coordinates {(5,-2) (2,-2) (-2,2) (0,3)(4,3)};
\draw plot[smooth cycle]coordinates {(-6,-2) (0,-3) (6,-2)(6,3) (0,4)(-6,3)};
\draw plot [smooth cycle] coordinates {(-5,-2) (-2,-2) (2,1) (2,3)(-1,2)};
\draw(-4,-2.1)node[above]{$v_1$};
\draw(4,-2.1)node[above]{$v_2$};
\filldraw (-4,-2) circle (2.5pt);
\filldraw(4,-2)circle(2.5pt);
\draw(-1.8,-0.6)node{$v_1\mathcal{G}[B_1]$};
\draw(3,0)node{$v_2\mathcal{G}[B_2]$};
\draw(0,3.9)node[above]{$v\mathcal{G}[B]$};
\end{tikzpicture}
\begin{tikzpicture}[xscale=0.4,yscale=0.6]
\draw (-7.5,2)node{};
\draw (-7,-2)--(7,-2);
\draw (7.2,-1.9) node [below left]{$\mathcal{K}$};
\draw[thick](-6,-2)--(6,-2);
\draw[very thick](-5,-2)--(-2,-2);
\draw[very thick] (2,-2)--(5,-2);
\draw(-2.8,-1.7)node{$\mathcal{B}_1$};
\draw(2.8,-1.7)node{$\mathcal{B}_2$};
\draw(0,-2.1)node[above]{$\mathcal{B}$};
\draw plot[smooth cycle]coordinates {(-6,-2) (0,-3) (6,-2)(6,3) (0,4)(-6,3)};
\draw plot [smooth cycle] coordinates {(-5,-2) (-2,-2) (0,0) (2,-2)  (5,-2) (5,2) (-5,2)};
\draw(-4,-2.1)node[above]{$v_1$};
\draw(4,-2.1)node[above]{$v_2$};
\filldraw (-4,-2) circle (2.5pt);
\filldraw(4,-2)circle(2.5pt);
\draw(0,1.5)node{$v_1\mathcal{G}[B_1]=v_2\mathcal{G}[B_2]$};
\draw(0,3.9)node[above]{$v\mathcal{G}[B]$};
\end{tikzpicture}
\caption{Configurations violating admissibility (Definition~\ref{def:admissible})}\label{fig:admissibility}
\end{figure}
The condition formulated in Definition~\ref{def:admissible}
 corresponds to the notion of \emph{freeness} in~\cite{otto3}, here
 for the embedded graphs $\mathcal{B} = v\mK[B]$ in $v \mathcal{G}[B]$. 
  We note that, if $\mathcal{K}$ is admissible for
 $\pss{A}$-coset extension,
  then, for every $B\subseteq A$, every $B$-component
 $v\mathcal{K}[B]$
 is admissible for
 $\pss{B}$-coset extension.
 
Now let $\mathcal{K}$ be a subgraph of $\mathcal{G}$ that is
admissible for
$\pss{A}$-coset extension
and fix a set $B\subsetneq A$. Let $\mathcal{B}_1,\dots,\mathcal{B}_k$ be all the
$B$-components of $\mathcal{K}$. For every $i=1,\dots, k$ select a
vertex $v_i\in \mathcal{B}_i$. Then, in $\mathcal{G}$, the coset
$v_i\mathcal{G}[B]$ contains $\mathcal{B}_i$ as a subgraph. Let now
$\mathsf{CE}(G,\mathcal{K};B)$ be the graph obtained by 
extending each
component $\mathcal{B}_i$ in $\mK$ to the entire coset
$v_i\mathcal{G}[B]$. So $\mathsf{CE}(G,\mathcal{K};B)$ is
the graph obtained by attaching in $\mathcal{K}$ to each vertex $v_i$
a copy $v_i\mathcal{G}[B]$ of $\mathcal{G}[B]$ and then identifying
all of $\mathcal{B}_i$ with its copy inside $v_i\mathcal{G}[B]$, but
without performing any further identification (of vertices and/or
edges). The graph $\mathsf{CE}(G,\mathcal{K};B)$ thus appears as a
bunch of pairwise disjoint copies of $\mathcal{G}[B]$, connected by
edges labelled by letters from $A\setminus B$. The union of the
latter edges  with all the $\mathcal{B}_i$ then spans the graph $\mathcal{K}$.

We give a more formal definition of $\mathsf{CE}(G,\mathcal{K};B)$. Let $\mathcal{K}$ be given with $B$-components $\mathcal{B}_1,\dots,\mathcal{B}_k$ and selected vertices $v_i\in \mathcal{B}_i$  for $i=1,\dots, k$. For every $i$ let $\iota_i\colon \mathcal{B}_i\to\mathcal{G}[B]$ be the unique graph monomorphism mapping $v_i$ to $1$. Then
\begin{equation} \label{eq:CE(K,B)}
\mathsf{CE}(G,\mathcal{K};B) :=\left.\big(\mathcal{K}\cup \bigcup_{i=1}^k
  \mathcal{G}[B]\times \{i\}\big)\right.\big/\,\Theta
\end{equation}
where $\Theta$ is the equivalence relation all of whose non-singleton equivalence classes  are exactly the two-element sets
\[\{x,(\iota_i(x),i)\}\mbox{ with }x\in \mathcal{B}_i,\ i=1,\dots,k\]
where $x$ denotes a vertex or an edge of $\mB_i$.
The union on the right-hand side of \eqref{eq:CE(K,B)} is a union of
pairwise disjoint connected graphs and $\Theta$ is certainly a
congruence relation. 
The resulting graph
$\mathsf{CE}(G,\mathcal{K};B)$ is the \emph{$B$-coset extension of the
  $A$-graph $\mathcal{K}$}.
The congruence $\Theta$ does not identify any two elements (edges or vertices) of $\mK$ with each other, hence $\sfCE(G,\mK;B)$ contains $\mK$ as a subgraph in a canonical way which, in this context, is called the \emph{skeleton}
of $\mathsf{CE}(G,\mathcal{K};B)$. For $v_i\in \mB_i\subseteq
\mK\subseteq \sfCE(G, \mK;B)$ the $B$-component of $v_i$ in
$\sfCE(G,\mK;B)$ is isomorphic with the coset graph $\mG[B]$. Hence
these $B$-components of $\sfCE(G,\mK;B)$  will
also be denoted by $v_i\mG[B]$ and
addressed as \emph{constituent cosets} of $\sfCE(G,\mK;B)$ in this r\^ole. 

For $C\subsetneq B\subsetneq A$, 
condition~\eqref{eq:freeness} of  Definition~\ref{def:admissible}
(by taking $B_1=C=B_2$) implies
that  $\mathsf{CE}(G,\mathcal{K};C)$ is realised
as a subgraph of
$\mathsf{CE}(G,\mathcal{K};B)$. Moreover, for $C_1,C_2\subsetneq B$, $C_1\ne C_2$, once more condition \eqref{eq:freeness} (this time taking $C_1=B_1\ne B_2=C_2$) implies that 
\begin{equation} \label{eq:CE-boolean}
\mathsf{CE}(G,\mathcal{K};C_1)\cap\mathsf{CE}(G,\mathcal{K};C_2)= \mathsf{CE}(G,\mathcal{K};C_1\cap C_2)
\end{equation} 
where the intersection takes place in $\sfCE(G,\mK;B)$.
 Now let $\mathbb{P}$ be a set of proper subsets of $A$. Then the \emph{$\bP$-coset extension of $\mathcal{K}$} is defined as
\begin{equation}\label{eq:CEgeneral}
\mathsf{CE}(G,\mathcal{K};\mathbb{P}):=\left(\bigcup\big\{\mathsf{CE}(G,\mathcal{K};B)\times\{B\}\colon {B\in \mathbb{P}}\big\}\right) \big/\,\Psi
\end{equation}
where $\Psi$ is the congruence defined on the disjoint union of all $B$-coset extensions $\mathsf{CE}(G,\mathcal{K};B)$ with $B\in \mathbb{P}$, by setting
\[(x_1,B_1)\mathrel{\Psi}(x_2,B_2):\Longleftrightarrow x_1=x_2\in \mathsf{CE}(G,\mathcal{K};B_1\cap B_2).\]
In other words, an edge or a vertex of
$\mathsf{CE}(G,\mathcal{K};B_1)$ is identified with one in
$\mathsf{CE}(G,\mathcal{K};B_2)$ if they represent the same element in
$\mathsf{CE}(G,\mathcal{K}; B_1\cap B_2)$.
Transitivity of $\Psi$ follows from \eqref{eq:CE-boolean}:
indeed, for $i=1,2,3$, let $B_i\in \bP$ and $x_i\in \sfCE(G,\mK;B_i)$ be such that $(x_1,B_1)\mathrel{\Psi}(x_2,B_2)$ and $(x_2,B_2)\mathrel{\Psi}(x_3,B_3)$. Then
\[x_1=x_2\in \sfCE(G,\mK;B_1\cap B_2)\mbox{ and }x_2=x_3\in \sfCE(G,\mK;B_2\cap B_3)\] so that
\[x_1=x_3\in \sfCE(G,\mK;B_1\cap B_2)\cap \sfCE(G,\mK;B_2\cap B_3)=\sfCE(G,\mK;B_1\cap B_2\cap B_3)\]
by application of \eqref{eq:CE-boolean} for $C_1=B_1\cap B_2$,
$C_2=B_2\cap B_3$ and $B=B_2$,
where the intersection takes place in $\sfCE(G,\mK;B_2)$.
Provided that $B\in \mathbb{P}$, the coset extension
$\mathsf{CE}(G,\mathcal{K};B)$ is embedded in
$\mathsf{CE}(G,\mathcal{K};\mathbb{P})$ via $x\mapsto (x,B)\Psi$
where $(x,B)\Psi$ denotes the $\Psi$-class of $(x,B)$. 
For $v\in \mK$ and $B\in \bP$, the subgraphs $v\mG[B]$ of $\sfCE(G,\mK;\bP)$ are the \emph{constituent cosets} of $\sfCE(G,\mK;\bP)$
 and the subgraph $\mK$ is the \emph{skeleton} of $\sfCE(G,\mK;\bP)$.

Geometrically, the coset extension
$\mathsf{CE}(G,\mathcal{K};\mathbb{P})$ can be viewed as follows.
For every $B\in \mathbb{P}$ consider $\mathsf{CE}(G,\mathcal{K};B)$ and attach these graphs to each other by identification   of their skeleton $\mathcal{K}$, then form the largest $E$-graph quotient (that is, perform all identifications necessary to obtain an $E$-graph, but no more). 
The graph $\sfCE(G,\mK;\bP)$ then is  the union 
\[\sfCE(G,\mK;\bP)=\bigcup_{B\in \bP}\sfCE(G,\mK;B)\] of its subgraphs $\sfCE(G,\mK;B)$ with $B\in \bP$. 
 For $B_1,B_2\in \bP$ then
\begin{equation}\label{eq:boolean extended}
\sfCE(G,\mK;B_1)\cap\sfCE(G,\mK;B_2)=\sfCE(G,\mK;B_1\cap B_2).
\end{equation}
This is reminiscent of \eqref{eq:CE-boolean} but $B_1$ and $ B_2$ 
are now arbitrary members of $\bP$ (rather than subsets of some $B\subsetneq A$) and the intersection takes place in $\sfCE(G,\mK;\bP)$ (rather than in $\sfCE(G,\mK;B)$). Moreover, condition \eqref{eq:boolean extended} can be reformulated as a condition analogous to \eqref{eq:freeness}: for  any $B_1,B_2\in \bP$ and vertices $v_1,v_2\in\mK$:
\begin{equation}\label{eq:freeness extended}
v_1\mK[B_1]\cap v_2\mK[B_2]=\varnothing\Longrightarrow 
v_1\mG[B_1]\cap v_2\mG[B_2]=\varnothing
\end{equation}
where the intersections take place in $\sfCE(G,\mK;\bP)$.

If every label of $\mathcal{K}$ appears in some member $B$ of
$\mathbb{P}$, then $\mathsf{CE}(G,\mathcal{K};\mathbb{P})$ is weakly
complete since every edge of $\mathsf{CE}(G,\mathcal{K};\mathbb{P})$
occurs in some coset subgraph $v\mathcal{G}[B]$. Most relevant will be
the case $\mathbb{P}=\mathbb{P}_A$, the set of \textsl{all} proper
subsets of $A$: we call $\mathsf{CE}(G,\mathcal{K};\mathbb{P}_A)$
the \emph{full}
$\pss{A}$-coset extension
of $\mathcal{K}$. In case $\mathcal{K}=\{v\}$ (one vertex, no edge) the $\bP$-coset extension $\mathsf{CE}(G,\mathcal{K};\mathbb{P})$ reduces to the cluster $\mathsf{CL}(G[A],\mathbb{P})$.
\begin{Rmk}\label{rmk: proper A-graph} An $A$-graph $\mK$ which is admissible for
$\pss{A}$-coset extension
may actually only contain edges labelled by letters
 (and their inverses) from some set $B\subsetneq A$. In this case $\sfCE(G,\mK;B)\cong \mG[B]$; however, this is not in
conflict with the definition of the full
$\pss{A}$-coset extension.
For sets $C\subsetneq A$ with $C\nsubseteq B$, the $C$-components of
$\mK$ coincide with the $C\cap B$-components, but nevertheless every
such $C\cap B$-component is extended to a full $C$-coset $v\mG[C]$ in order to get $\sfCE(G,\mK;C)$.
\end{Rmk}

We continue with further investigations of
$\pss{A}$-coset extensions.
\begin{Prop}\label{prop:morphismCEtoGA} Let $\mathcal{K}\subseteq
  \mathcal{G}[A]$ be admissible for
$\pss{A}$-coset extension
and $\mathbb{P}$ be a set of proper subsets of $A$. Then the inclusion monomorphism $\iota\colon \mathcal{K}\hookrightarrow \mathcal{G}[A]$ admits a unique extension to a graph morphism ${\iota_\mathbb{P}}\colon \mathsf{CE}(G,\mathcal{K};\mathbb{P})\to \mathcal{G}[A]$.
\end{Prop}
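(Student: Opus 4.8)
The plan is to build $\iota_\bP$ by sending the skeleton $\mK$ into $\mG[A]$ via the inclusion $\iota$ and by sending each abstract constituent coset to the corresponding \emph{actual} coset of $\mG[A]$, and then to obtain uniqueness from the fact that $\mG[A]$, being the Cayley graph of $G[A]$, is a complete $A$-graph. Recall from \eqref{eq:CEgeneral} that $\sfCE(G,\mK;\bP)$ is a quotient of the disjoint union of $\mK$ with copies of the coset graphs $\mG[B]$ for $B\in\bP$, and that each constituent coset is attached at a vertex $v_i$ of $\mK$. Since $v_i$ is a vertex of $\mK\subseteq\mG[A]$, hence an element of $G[A]$, left multiplication by $v_i$ is an automorphism of $\mG$ that maps $\mG[B]$ (the $B$-component of $1$) isomorphically onto the actual $B$-coset $v_i\mG[B]\subseteq\mG[A]$. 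I would therefore define the candidate map on the presenting disjoint union by $\iota$ on $\mK$ and by $x\mapsto v_i x$ on the copy of $\mG[B]$ glued at $v_i$, and then verify that it descends to the quotient $\sfCE(G,\mK;\bP)$.

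For a single fixed $B\in\bP$, this prescription respects the congruence $\Theta$ of \eqref{eq:CE(K,B)}: for $x\in\mathcal{B}_i=v_i\mK[B]$ the gluing monomorphism $\iota_i$ is just left multiplication by $v_i^{-1}$, so the two representatives $x$ and $(\iota_i(x),i)$ are both sent to $x\in\mG[A]$. Thus one obtains a morphism $\iota_B\colon\sfCE(G,\mK;B)\to\mG[A]$ extending $\iota$. The crux is to check that the various $\iota_B$ agree on the overlaps prescribed by $\Psi$, i.e.\ that, writing $D:=B_1\cap B_2$, both $\iota_{B_1}$ and $\iota_{B_2}$ restrict to one and the same map on the common subgraph $\sfCE(G,\mK;D)\subseteq\sfCE(G,\mK;B_i)$. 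This reduces to the observation that a constituent $D$-coset $w\mG[D]$ of $\sfCE(G,\mK;D)$ lies inside a constituent coset $v_i\mG[B_1]$ of $\sfCE(G,\mK;B_1)$ with $w\in v_i G[D]$, and that left multiplication by $v_i$, restricted to this sub-coset, coincides with left multiplication by $w$ on $\mG[D]$; hence $\iota_{B_1}$ restricted to $\sfCE(G,\mK;D)$ equals $\iota_D$, and symmetrically for $B_2$, so $\iota_{B_1}$ and $\iota_{B_2}$ agree on the $\Psi$-overlap. Here the standing retractability of $G[A]$ (Proviso~\ref{prov:Aretractable}) is what makes the actual cosets of $\mG[A]$ behave Boolean-ly, so that $v_1\mG[B_1]$ and $v_2\mG[B_2]$ meet along an actual $(B_1\cap B_2)$-coset as in \eqref{eq:2acyclic}, matching the abstract identification \eqref{eq:boolean extended} that defines $\Psi$. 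Consequently the $\iota_B$ glue to a single morphism $\iota_\bP\colon\sfCE(G,\mK;\bP)\to\mG[A]$ extending $\iota$.

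Uniqueness is immediate from completeness: as the Cayley graph of $G[A]$, the target $\mG[A]$ has, at each vertex and for each label $a\in\til{A}$, exactly one outgoing edge, so any morphism into $\mG[A]$ from a connected $A$-graph is determined by the image of a single vertex. Since $\sfCE(G,\mK;\bP)$ is connected --- every constituent coset is attached along a non-empty, connected component of the connected skeleton $\mK$ --- and since $\iota_\bP$ must agree with $\iota$ on $\mK$, the image of any vertex of $\mK$ is prescribed, forcing $\iota_\bP$ to be unique. I expect the main obstacle to be precisely the consistency check across different members of $\bP$, i.e.\ respecting $\Psi$ through the identification $\iota_{B_1}|_{\sfCE(G,\mK;D)}=\iota_{B_2}|_{\sfCE(G,\mK;D)}=\iota_D$; everything else is the routine verification that left translations are label-, incidence- and involution-preserving graph automorphisms.
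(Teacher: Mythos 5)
Your construction is correct and essentially the paper's own proof: the paper likewise sends each copy of $\mG[B]$ into $\mG[A]$ by the unique graph monomorphism taking its base vertex $(1,i)$ to $v_i$ (which is exactly left translation by $v_i$), verifies that the congruences $\Theta$ and $\Psi$ are contained in the kernel of the resulting map so that it factors through the quotient, and your uniqueness argument via determinism of the complete $A$-graph $\mG[A]$ together with connectedness of $\sfCE(G,\mK;\bP)$ is the same (implicit) one; in fact you spell out the $\Psi$-compatibility step in more detail than the paper, which merely calls it ``similar'' to the $\Theta$ step. The only blemish is the slip ``$w\in v_iG[D]$'' --- it should read $w\in v_iG[B_1]$, equivalently $wG[D]\subseteq v_iG[B_1]$ --- but this does not affect the argument, since your subsequent identification of left translation by $v_i$ on the sub-coset with left translation by $w$ on $\mG[D]$ is correct (and can be seen even more cheaply: both restrictions of $\iota_{B_1}$ and $\iota_D$ to the embedded $\sfCE(G,\mK;D)$ are morphisms into the deterministic graph $\mG[A]$ agreeing on the connected skeleton $\mK$, hence equal).
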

\begin{proof} We first establish a unique extension $\iota_B\colon
  \mathsf{CE}(G,\mathcal{K};B)\to \mathcal{G}[A]$
  for each $B \in \mathbb{P}$.
Let $\mathcal{B}_1,\dots,\mathcal{B}_k$ be all $B$-components of $\mathcal{K}$ with {selected} vertices $v_i\in \mathcal{B}_i$ for all $i$. Then for every $i$ there is a unique graph monomorphism $\kappa_i\colon \mathcal{G}[B]\times\{i\}\to \mathcal{G}[A]$ such that $\kappa_i(1,i)=v_i$. The image of $\kappa_i$ coincides with the coset subgraph $v_i\mathcal{G}[B]$ of $\mathcal{G}[A]$. Then, the union $\kappa:=\iota\cup\bigcup_{i=1}^k\kappa_i$ is a morphism \[\kappa\colon\mathcal{K}\cup\bigcup_{i=1}^k\mathcal{G}[B]\times\{i\}\to \mathcal{G}[A]\] for which, for all $i$ and $x\in \mathcal{B}_i$,
\[ \kappa(x)=\iota(x)=x=\kappa_i(\iota_i(x),i)=\kappa(\iota_i(x),i)\]
where $\iota_i\colon \mB_i\to \mG[B]$ is the unique graph monomorphism
mapping $v_i$ to $1$ that
occurs in the definition of $\sfCE(G,\mK;B)$.
It follows that the congruence $\Theta$ in \eqref{eq:CE(K,B)} is
contained in the {kernel} of $\kappa$  
and hence $\kappa$ factors through $\mathsf{CE}(G,\mathcal{K};B)$ as
$\kappa=\iota_B\circ \pi_\Theta$  (where $\pi_\Theta$ is the canonical
projection $\pi_\Theta(x)=x\Theta$).

Next consider the disjoint union
\[\bigcup_{B\in\mathbb{P}}\mathsf{CE}(G,\mathcal{K};B)\times\{B\}\] and let
\[\kappa_{\mathbb{P}}:=\bigcup_{B\in \mathbb{P}}{\boldsymbol{\iota}_B}\colon
  \bigcup_{B\in\mathbb{P}}\mathsf{CE}(G,\mathcal{K};B)\times\{B\}\to
  \mathcal{G}[A]
\]
where ${\boldsymbol{\iota}_B}\colon \sfCE(G,\mK;B)\times \{B\}\to \mG[A]$ is defined by ${\boldsymbol{\iota}_B}(x,B)=\iota_B(x)$.
Similar to $\Theta$ and $\kappa$, the congruence $\Psi$ that occurs
in \eqref{eq:CEgeneral} is contained in the kernel of
${\kappa_{\mathbb{P}}}$,
whence ${\kappa_{\mathbb{P}}}$ factors through $\sfCE(G,\mK;\bP)$ as ${\kappa_{\mathbb{P}}}={\iota_{\mathbb{P}}}\circ\pi_\Psi$ for some unique morphism ${\iota_{\mathbb{P}}}\colon \mathsf{CE}(G,\mathcal{K};\mathbb{P})\to \mathcal{G}[A]$ (with $\pi_{\Psi}$ being again the projection $x\mapsto x\Psi$).
\end{proof}

The morphism ${\iota_B}\colon \sfCE(G,\mK;B)\to \mG[A]$ is injective when restricted
either to the skeleton $\mathcal{K}$ or to any constituent coset
$v\mathcal{G}[B]$. 
However, in general $\iota_B$ is not injective on its entire domain
$\sfCE(G,\mK;B)$. Within $\mG[A]$ it may happen that for distinct
vertices $v_i\ne v_j$ (as selected in the above proof) the
corresponding cosets coincide: $v_i\mG[B]=v_j\mG[B]=:v\mG[B]$.
In this case, $\iota_B$ maps $v_i\mG[B]$ as well as $v_j\mG[B]$ onto
$v\mG[B]\subseteq\mG[A]$, although $\iota_B(v_i\mK[B])$ and
$\iota_B(v_j\mK[B])$ are distinct (and hence disjoint) $B$-components
of $\mK$ within $v\mG[B]\subseteq\mG[A]$ (see Figure~\ref{fig:unfoldedCE}).
The coset $v\mG[B]$ then contains (at least) two distinct $B$-components
$\mB_i\ne \mB_j$ of $\mK$. As a consequence, the vertices $v_i$ and
$v_j$ can be connected by a $B$-path in $\mG[A]$, but there is no
$B$-path connecting these vertices  in $\mK$. This alludes to one of
the key ideas of the paper and will eventually lead to the proof of
the crucial Lemma~\ref{lem:contentconnected}.

\begin{Rmk} \label{rmk:unfolding}Suppose that $H\twoheadrightarrow G$ is an  expansion 
whose Cayley graph $\mH$ covers some completion of (some supergraph
of) $\mathsf{CE}(G,\mK;B)$.
Then the group $H$ avoids \textsl{every} relation $p=q$ where $p$ is any word
labelling a path in $\mK$ that connects two distinct $B$-components of
$\mK$ and $q$ is any $\til{B}$-word, essentially because the graph $\sfCE(G,\mK;B)$ unfolds the subgraph $\mK\cup\bigcup v_i\mG[B]$ of
$\mG[A]$ that arises as the image  of $\sfCE(G,\mK;B)$ under
$\iota_B$ (see Figure~\ref{fig:unfoldedCE}).
\end{Rmk}

\begin{figure}[ht]
\begin{tikzpicture}[xscale=0.55]
\draw plot[smooth cycle]coordinates {(-2,-1) (0,-1.5) (2,-1)(2,1) (0,1.5)(-2,1)};
\draw[thick](-2,-1)--(1,-1);
\draw[thick](-2,1)--(-1,1);
\draw[dashed,thick](-2,1)--(-3,1);
\draw[dashed,thick](-2,-1)--(-3,-1);
\draw(0,1.75) node{$v_i\mathcal{G}[B]=v_j\mathcal{G}[B]$};
\filldraw(-1,1)circle(1pt);
\filldraw(1,-1)circle(1pt);
\draw[above](-1,0.9)node{$v_i$};
\draw[below](1,-0.95)node{$v_j$};
\draw[-latex,blue,thick] (-1,1)--(0,0);
\draw[blue, thick] (0,0)--(1,-1);
\draw[blue,right](-0.1,0)node{$q$};
\draw[red,thick] (-1,1)--(2.5,1);
\draw[red,thick] (1,-1)--(2.5,-1);
\draw[-latex,red,thick] (2.5,1) to [out=0,in=0] (2.5,-1);
\draw[red](2.75,0)node{$p$};
\draw[above](-2.5,1)node{$\mathcal{K}$};
\end{tikzpicture}
\begin{tikzpicture}[xscale=0.5]
\draw(-4,1)node{};
\draw plot[smooth cycle]coordinates {(-2,-1) (0,-1.5) (2,-1)(2,1) (0,1.5)(-2,1)};
\draw plot[smooth cycle]coordinates {(5,-1) (7,-1.5) (9,-1)(9,1) (7,1.5)(5,1)};
\draw[thick](-2,1)--(-1,1);
\draw[dashed,thick](-2,1)--(-3,1);
\draw(0,1.75) node{$v_i\mathcal{G}[B]$};
\draw(7,1.75) node {$v_j\mathcal{G}[B]$};
\filldraw(-1,1)circle(1pt);
\filldraw(1,-1)circle(1pt);
\draw[above](-1,0.9)node{$v_i$};
\draw[below](6,-0.95)node{$v_j$};
\draw[-latex,blue,thick] (-1,1)--(0,0);
\draw[blue,thick](0,0)--(1,-1);
\draw[blue,right](0,0)node{$q$};
\draw[red,thick] (-1,1)--(2.5,1);
\draw[red,thick] (4.5,-1)--(6,-1);
\draw[-latex,red,thick]  (2.5,1) to [out=0,in=180] (4.5,-1);
\filldraw(6,-1)circle(1pt);
\draw[thick](6,-1)--(9,-1);
\draw[dashed,thick](9,-1)--(10,-1);
\draw[red](3.9,0)node{$p$};
\draw[above](-2.5,1)node{$\mathcal{K}$};
\filldraw(8,1)circle(1pt);
\draw[-latex,blue,thick](8,1)--(7,0);
\draw[blue,thick] (7,0)--(6,-1);
\draw[blue,right](6.2,0)node{$q$};
\end{tikzpicture}
\caption{Parts of $\mathcal{K}\cup\bigcup_{t=1}^{k} v_t\mathcal{G}[B]\subseteq\mathcal{G}[A]$ and of $\mathsf{CE}(G,\mathcal{K};B)$}\label{fig:unfoldedCE}
\end{figure}

Let $\mK$ be a connected $A$-graph admissible for
$\pss{A}$-coset extension,
let $B\subsetneq A$ and let $\mB = v \mK[B] \subseteq \mK$ be the $B$-component of some vertex $v$ in $\mK$.
By construction of $\sfCE(G,\mK;\bP_A)$,
\[v\in \mB\subseteq v\mG[B]\subseteq \sfCE(G,\mK;B) \subseteq
  \sfCE(G,\mK;\bP_A).\]We are able to refine this chain as follows:
$\mB$ is itself admissible for
$\pss{B}$-coset extension
and hence $\sfCE(G,\mB;\bP_B)$ is well defined. Admissibility of $\mK$
(Definition~\ref{def:admissible}) implies that in this case the
morphism $\iota_{\bP_B}\colon \sfCE(G,\mB;\bP_B)\to \mG[B]$ of
Proposition~\ref{prop:morphismCEtoGA} is injective. {Indeed, $\iota_{\bP_B}$ is injective on the skeleton $\mB$, and on every constituent coset $v\mG[C]$ for any $C\subsetneq B$ and any vertex $v$. If there were vertices $x\ne y$ such that $\iota_{\bP_B}(x)=\iota_{\bP_B}(y)$, then $x$ and $y$ would belong to two distinct constituent cosets $x\in v_1\mG[B_1]$ and $y\in v_2\mG[B_2]$ ($B_1, B_2\subsetneq B$, possibly $B_1=B_2$) so that $x$ and $y$ would coincide as elements of $v_1\mG[B]=v_2\mG[B]$. But this is excluded by Definition~\ref{def:admissible}. Hence}
we get the following. 
\begin{Lemma}\label{lem:CEBsubsetCEA}
Let $\mK$ be a subgraph of $\mG[A]$ which is admissible for
$\pss{A}$-coset extension (in particular $G[A]$ is retractable,
cf.\ Definition~\ref{def:admissible} and also Proviso~\ref{prov:Aretractable}). 
Let $B\subsetneq A$ with $|B|\ge 2$; then every
  $B$-component $\mB$ of $\mK$ is admissible for
$\pss{B}$-coset extension
and the morphism $\iota_{\bP_B}\colon \sfCE(G,\mB;\bP_B)\to \mG[B]$ is injective. In particular, for any vertex $v\in \mB$,
\[v\in \mB\subseteq \sfCE(G,\mB;\bP_B) \subseteq v\mG[B]\subseteq
  \sfCE(G,\mK;B)\subseteq  \sfCE(G,\mK;\bP_A).\]
\end{Lemma}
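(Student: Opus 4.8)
The plan is to prove the three assertions in order, with the injectivity of $\iota_{\bP_B}$ as the decisive step; the displayed chain then follows by reading off the constructions. Throughout I identify the $B$-component $\mB=v\mK[B]$ with a subgraph of the single $B$-coset $v\mG[B]\cong\mG[B]$, so that $\iota_{\bP_B}$ is the extension of the inclusion $\mB\hookrightarrow v\mG[B]$ furnished by Proposition~\ref{prop:morphismCEtoGA} (with $A$ replaced by $B$ and $\mB$ in place of $\mK$).

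The admissibility of $\mB$ for $\pss{B}$-coset extension I would dispose of first, as it is essentially the observation recorded after Definition~\ref{def:admissible}: the defining instances for $\mB$ involve sets $B_1,B_2\subsetneq B'\subsetneq B$, and since $B'\subsetneq B\subsetneq A$ each is a special case of the admissibility of $\mK$ with middle set $B'$, the $B'$-components of $\mB$ being exactly those $B'$-components of $\mK$ contained in $\mB$.

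The core is the injectivity of $\iota_{\bP_B}$. The key observation I would isolate at the outset is: for every $C\subsetneq B$, two \emph{distinct} $C$-components of $\mB$ lie in \emph{disjoint} $C$-cosets of $v\mG[B]$. This is precisely the admissibility of $\mK$ applied with middle set $B$ and $B_1=B_2=C$, so that disjointness of $w_i\mB[C]$ and $w_j\mB[C]$ in $\mB$ forces $w_i\mG[C]\cap w_j\mG[C]=\varnothing$ in $v\mG[B]$. From this I would deduce, as a first step, that each restriction $\iota_C:=\iota_{\bP_B}|_{\sfCE(G,\mB;C)}$ is injective: $\iota_C$ is an isomorphism on the skeleton and on each constituent coset $w_i\mG[C]$, and the observation rules out the only possible collisions — two constituent cosets collapsing onto a single $C$-coset of $v\mG[B]$ (the ``unfolding'' of Remark~\ref{rmk:unfolding}) and a constituent coset meeting the skeleton beyond its glued component. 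For arbitrary $C_1,C_2\subsetneq B$ I would then take $x\in\sfCE(G,\mB;C_1)$ and $y\in\sfCE(G,\mB;C_2)$ with $\iota_{\bP_B}(x)=\iota_{\bP_B}(y)=:z$ and show $z\in\mathrm{im}(\iota_C)$ for $C:=C_1\cap C_2$: the point $z$ lies in $w_1\mG[C_1]\cap w_2\mG[C_2]$, which by $2$-acyclicity \eqref{eq:2acyclic} is a single $C$-coset; admissibility of $\mK$ (the contrapositive of the key observation, again with middle set $B$) furnishes a skeleton vertex $t\in w_1\mB[C_1]\cap w_2\mB[C_2]$, so this $C$-coset equals $t\mG[C]$ and hence lies in $\mathrm{im}(\iota_C)$. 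Injectivity of $\iota_C$ yields a unique preimage $x'\in\sfCE(G,\mB;C)$ of $z$; viewing $\sfCE(G,\mB;C)$ inside $\sfCE(G,\mB;C_1)$ and $\sfCE(G,\mB;C_2)$ and invoking injectivity of $\iota_{C_1},\iota_{C_2}$ gives $x=x'=y$ in each, whence $x=y$ in $\sfCE(G,\mB;\bP_B)$ by the identification $\Psi$ of \eqref{eq:CEgeneral} together with the Boolean law \eqref{eq:CE-boolean}. As $\iota_{\bP_B}$ is label-preserving, injectivity on vertices entails injectivity on edges.

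Finally, the chain of inclusions is obtained link by link: $v\in\mB$ is given; $\mB$ is the skeleton of $\sfCE(G,\mB;\bP_B)$; injectivity of $\iota_{\bP_B}$ just proved realises $\sfCE(G,\mB;\bP_B)$ as a subgraph of $v\mG[B]$; the coset $v\mG[B]$ is the constituent coset of $\sfCE(G,\mK;B)$ arising from the $B$-component $\mB$; and $\sfCE(G,\mK;B)\subseteq\sfCE(G,\mK;\bP_A)$ holds by \eqref{eq:CEgeneral} since $B\in\bP_A$. The main obstacle is the last part of the injectivity argument: one must show that the amalgamation pattern of the coset extension (governed by $\Psi$ and the Boolean identity \eqref{eq:CE-boolean}) agrees with the coset-intersection pattern in $v\mG[B]$, and it is exactly the admissibility of $\mK$ that makes these two patterns coincide.
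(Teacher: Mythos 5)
Your proof is correct and takes the route the paper intends: the paper states this lemma with essentially no proof, asserting only that admissibility of $\mK$ (Definition~\ref{def:admissible}) implies injectivity of $\iota_{\bP_B}$, and your argument supplies exactly the missing details --- ruling out collisions inside a single $\sfCE(G,\mB;C)$ via the coset-disjointness condition \eqref{eq:freeness} (with middle set $B$), and cross-extension collisions via $2$-acyclicity \eqref{eq:2acyclic} together with the contrapositive of admissibility and the identification rule $\Psi$ of \eqref{eq:CEgeneral}. The admissibility of $\mB$ and the final chain of inclusions are handled just as the paper does (the former being the remark following Definition~\ref{def:admissible}), so nothing further is needed.
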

Another consequence 
concerns connectivity in the graph $\mK$; it will be of significant use later. In terms of~\cite{otto3} this
means that a graph $\mK$  which is admissible for
$\pss{A}$-coset extension
is $2$-acyclic.
\begin{Lemma}\label{lem:K2acyclic}
Suppose that the graph $\mathcal{K}\subseteq \mG$ is admissible for
$\pss{A}$-coset extension.
Then, for any $B,C\subsetneq A$,
the intersection $\mathcal{B}\cap\mathcal{C}$ of any $B$-component
$\mathcal{B}$ and any $C$-component $\mathcal{C}$ of $\mK$ is
connected and hence is a $(B\cap C)$-component.
\end{Lemma}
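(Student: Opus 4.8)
The plan is to show that $\mathcal{B}\cap\mathcal{C}$, when non-empty, coincides with a single $(B\cap C)$-component of $\mathcal{K}$, which is connected by definition. Write $\mathcal{B}=v\mathcal{K}[B]$ and $\mathcal{C}=w\mathcal{K}[C]$ and set $D:=B\cap C$. The crucial reduction is the observation that any $D$-path in $\mathcal{K}$ starting at a vertex of $\mathcal{B}\cap\mathcal{C}$ necessarily stays inside $\mathcal{B}\cap\mathcal{C}$: since $D\subseteq B$ such a path is a $B$-path and hence remains in the $B$-component $\mathcal{B}$, and since $D\subseteq C$ it likewise remains in $\mathcal{C}$. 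Consequently, once I know that any two vertices $x,y\in\mathcal{B}\cap\mathcal{C}$ are $D$-connected within $\mathcal{K}$, it follows (checking edges via $\til{B}\cap\til{C}=\til{D}$) that $\mathcal{B}\cap\mathcal{C}=x\mathcal{K}[D]$, a single $D$-component. So the whole problem reduces to establishing $D$-connectivity in $\mathcal{K}$ of any two vertices $x,y\in\mathcal{B}\cap\mathcal{C}$.

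First I would pass to the ambient Cayley graph. The vertices $x,y$ lie in the common $B$-component $\mathcal{B}\subseteq\mathcal{G}$, hence are $B$-connected in $\mathcal{G}$; likewise they are $C$-connected in $\mathcal{G}$. Since $G[A]$ is retractable (Proviso~\ref{prov:Aretractable}), equation~\eqref{eq:2acyclic} applied to the subsets $B,C\subseteq A$ gives $G[B]\cap G[C]=G[D]$; in terms of connectivity this means that $x$ and $y$ are $D$-connected already in $\mathcal{G}$, i.e.\ $x\mathcal{G}[D]=y\mathcal{G}[D]$.

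Now I would invoke admissibility. If $B\subseteq C$ (or symmetrically $C\subseteq B$) then $D=B$ and $x,y$ are already $B$-connected inside $\mathcal{B}\subseteq\mathcal{K}$, so there is nothing more to prove. Otherwise $D\subsetneq B$, and I would apply the contrapositive of the freeness condition~\eqref{eq:freeness} of Definition~\ref{def:admissible} to the $B$-component $\mathcal{B}$, with the choice $B_1=B_2=D$, $v_1=x$, $v_2=y$: because the cosets $x\mathcal{G}[D]$ and $y\mathcal{G}[D]$ coincide, hence meet, inside $v\mathcal{G}[B]$, admissibility forces the $D$-components $x\mathcal{K}[D]$ and $y\mathcal{K}[D]$ to meet in $\mathcal{B}$, and two $D$-components that meet must coincide. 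Thus $x$ and $y$ are $D$-connected in $\mathcal{K}$, completing the argument. (The degenerate case $D=\varnothing$ needs no freeness, since \eqref{eq:2acyclic} already yields $x=y$.)

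The only genuinely delicate point is the application of freeness: one must check that $x,y$ are indeed vertices of the \emph{same} $B$-component $\mathcal{B}$, so that the data $B_1=B_2=D$, $v_1=x$, $v_2=y$ constitute a legitimate instance of~\eqref{eq:freeness}, and that the coset equality $x\mathcal{G}[D]=y\mathcal{G}[D]$ obtained from~\eqref{eq:2acyclic} takes place inside the single coset $v\mathcal{G}[B]$ containing $\mathcal{B}$. Everything else is bookkeeping with components and the inclusions $D\subseteq B$ and $D\subseteq C$. I expect no serious obstacle beyond this careful matching of hypotheses.
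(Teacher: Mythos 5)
Your proof is correct and is essentially the paper's own argument run in the contrapositive direction: the paper assumes two vertices lie in distinct components of $\mathcal{B}\cap\mathcal{C}$, applies freeness with $B_1=B_2=B\cap C$ to get disjoint cosets, and contradicts retractability, whereas you use retractability (2-acyclicity) first to get $x\mathcal{G}[D]=y\mathcal{G}[D]$ and then the contrapositive of the same freeness instance to conclude $D$-connectivity in $\mathcal{K}$. Your explicit treatment of the degenerate cases ($B\subseteq C$, $D=\varnothing$) and of the reduction to a single $D$-component is a slightly more careful write-up of steps the paper leaves implicit, but the approach is the same.
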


\begin{proof} Suppose that $B\ne C$ and let $u,v$ be vertices of
  $\mathcal{B}\cap \mathcal{C}$ and assume that they belong to
  distinct components of $\mathcal{B}\cap \mathcal{C}$. Admissibility
  of $\mathcal{K}$ (by taking $B_1=B\cap C=B_2$) implies that the
  cosets $u\mathcal{G}[B\cap C]$ and $v\mathcal{G}[B\cap C]$ are
  disjoint (that is, distinct), and both cosets are contained in
  $u\mathcal{G}[B]=v\mathcal{G}[B]$ as well as
  $u\mathcal{G}[C]=v\mathcal{G}[C]$. Consider the graph morphism
  ${\iota_{\mathbb{P}_A}}\colon
  \mathsf{CE}(G,\mathcal{K};\mathbb{P}_A)\to \mathcal{G}[A]$. It maps
  the cosets $u\mathcal{G}[B]$ as well as $v\mathcal{G}[C]$
  injectively to the corresponding coset subgraphs of
  $\mathcal{G}[A]$. Since $u\mathcal{G}[B\cap C]$ and
  $v\mathcal{G}[B\cap C]$ are disjoint, it follows that the
  intersection of the cosets $u\mathcal{G}[B]$ and $v\mathcal{G}[C]$
  (in $\mathcal{G}[A]$) is disconnected as it has at least the two
  components $u\mathcal{G}[B\cap C]$ and $v\mathcal{G}[B\cap
  C]$; this, however, contradicts the assumption that $G[A]$ is retractable.
\end{proof}

\subsubsection{Augmented coset extensions}
Similarly to
augmented
clusters we require augmented coset extensions. Again fix an $E$-group $G$,
let $A\subseteq E$ with $|A|\ge 2$ and assume that $G[A]$ is
retractable, according to Proviso~\ref{prov:Aretractable}. 
Let $\mathcal{K}\subseteq \mathcal{G}[A]$ be admissible for
$\pss{A}$-coset extension.
Recall that the full
$\pss{A}$-coset extension
$\mathsf{CE}(G,\mathcal{K};\mathbb{P}_A)$ can be seen as the union
$\bigcup_{B\subsetneq A}\mathsf{CE}(G,\mathcal{K},B)$ where for
$B,C\subsetneq
A$, \[\mathsf{CE}(G,\mathcal{K};B)\cap\mathsf{CE}(G,\mathcal{K};C)=\mathsf{CE}(G,\mathcal{K},
  B\cap C).\]  
Every vertex $x$ of $\mathsf{CE}(G,\mathcal{K};\mathbb{P}_A)$ is
sitting in some $\mathsf{CE}(G,\mathcal{K};B)$, and, inside
$\mathsf{CE}(G,\mathcal{K};B)$ in a unique \textsl{constituent coset}
$v\mathcal{G}[B]$ with $v\in \mathcal{K}$. The vertex $v$ is not
unique, but unique is its $B$-component $v\mathcal{K}[B]$. In this
situation we say that the pair $(B,v)$ \emph{supports} the vertex $x$
or \emph{provides support} for the vertex $x$ in
$\mathsf{CE}(G,\mathcal{K};\mathbb{P}_A)$; the \emph{size} of this
support is $|B|$. This actually means that the skeleton $\mathcal{K}$
may be accessed from the vertex $x$ by a $B$-path whose terminal
vertex is $v$. We say that $(B,v)$ provides \emph{unique minimal
  support} if, whenever $(C,w)$ provides support for $x$ then
$B\subseteq C$ and $v\mathcal{K}[B]\subseteq w\mathcal{K}[C]$. Now let
$\mathcal{J}$ be a subgraph of
$\mathsf{CE}(G,\mathcal{K};\mathbb{P}_A)$; for a set $B\subsetneq A$
and a vertex $v\in \mathcal{K}$ we say that $(B,v)$ 
\emph{provides unique minimal support for $\mathcal{J}$},
or that \emph{$\mathcal{J}$ has unique minimal support through $(B,v)$},
if $(B,v)$ supports 
some vertex $x$ of $\mathcal{J}$, and if some pair $(C,w)$ supports
any vertex $y$ of $\mathcal{J}$ then $B\subseteq C$ and $v\mathcal{K}[B]\subseteq w\mathcal{K}[C]$. In this case we say that the unique minimal support of $\mathcal{J}$ is \emph{attained} 
at the vertex $x$.
Notice that the condition $v\mK[B]\subseteq w\mK[C]$ implies 
the inclusion $v\mathcal{G}[B]\subseteq w\mathcal{G}[C]=v\mG[C]$ 
for the constituent cosets involved.
It follows from~\eqref{eq:boolean extended}  
that every one-vertex subgraph of $\mathsf{CE}(G,\mathcal{K};\mathbb{P}_A)$ has unique minimal support. 

We come to a crucial property, which the full
$\pss{A}$-coset extension
of a graph $\mathcal{K}$ may or may not have.
\begin{Def}[cluster property]
\label{def:cluster property}  The full coset extension $\mathsf{CE}(G,\mathcal{K};\mathbb{P}_A)$ has the \emph{cluster property} if, for every $B\subsetneq A$ the following hold:
\begin{enumerate}
\item every $B$-component $\mathcal{B}$ of $\mathsf{CE}(G,\mathcal{K};\mathbb{P}_A)$ which has empty intersection with the skeleton $\mathcal{K}$ is a $B$-cluster or a full $B$-coset;
\item every $\mathcal{B}$ of (1) has unique minimal support which is attained at some vertex $x$ of the core of $\mathcal{B}$ (if $\mB$ is a cluster).
\end{enumerate}
\end{Def}
Note that minimal support will typically not be attained at all core vertices.
We first show that the cluster property implies that components of the coset extension intersect nicely, that is, the coset extension is $2$-acyclic in 
terms of~\cite{otto3}.
\begin{Prop}\label{prop:CE=2acyclic} Suppose that $\mathcal{K}\subseteq \mathcal{G}[A]$ is admissible for
$\pss{A}$-coset extension
and that the full $\pss{A}$-coset extension
$\mathsf{CE}(G,\mathcal{K};\mathbb{P}_A)$ has the cluster property. Then, for all pairs $B,C\subsetneq A$ the intersection $\mathcal{B}\cap\mathcal{C}$ of any $B$-component $\mathcal{B}$ and any $C$-component $\mathcal{C}$ is connected and hence is a $(B\cap C)$-component of $\mathsf{CE}(G,\mathcal{K};\mathbb{P}_A)$.
\end{Prop}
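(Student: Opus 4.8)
The plan is to reduce the assertion to a two-vertex connectivity statement and then to play off the $2$-acyclicity of the Cayley graph $\mathcal{G}[A]$ against it through the morphism $\iota:=\iota_{\mathbb{P}_A}\colon\mathsf{CE}(G,\mathcal{K};\mathbb{P}_A)\to\mathcal{G}[A]$ of Proposition~\ref{prop:morphismCEtoGA}. Set $D:=B\cap C$. Any edge belonging to both $\mathcal{B}$ and $\mathcal{C}$ is labelled in $D$, so $\mathcal{B}\cap\mathcal{C}$ is a $D$-subgraph and is automatically a disjoint union of $D$-components; moreover, since $D\subseteq B$ and $D\subseteq C$, every $D$-path starting inside $\mathcal{B}\cap\mathcal{C}$ remains inside it. Hence it suffices to show that any two vertices $u,v\in\mathcal{B}\cap\mathcal{C}$ share a common $D$-component.

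First I would transport the problem to $\mathcal{G}[A]$. Pushing a $B$-path $u\to v$ inside $\mathcal{B}$ and a $C$-path inside $\mathcal{C}$ forward by $\iota$ shows that $\iota(u)$ and $\iota(v)$ lie in a common $B$-coset and a common $C$-coset of $\mathcal{G}[A]$. Since $G[A]$ is retractable, $\mathcal{G}[A]$ is $2$-acyclic (condition~\eqref{eq:2acyclic}, Remark~\ref{rem:2and3acyc}), so $\iota(u)$ and $\iota(v)$ are joined by a $D$-path $\delta$ lying in the single coset $\iota(u)\mathcal{G}[D]=\iota(v)\mathcal{G}[D]$.

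The decisive point is to lift $\delta$ to a $D$-path from $u$ inside $\mathcal{B}\cap\mathcal{C}$; the non-injectivity of $\iota$ (Figure~\ref{fig:unfoldedCE}) is the genuine obstacle, and it is exactly what the cluster property neutralises. I would first record that $\iota$ is injective on every single component, in particular on $\mathcal{B}$: a component is assembled from constituent cosets glued along their cores, all mapped consistently into one coset of $\mathcal{G}[A]$. Now let $\mathcal{D}_u$ be the $D$-component of $u$. If $\mathcal{D}_u$ meets the skeleton, then it is a full constituent coset $w\mathcal{G}[D]$ (a complete $D$-subgraph, hence itself a $D$-component), and $\iota$ carries it isomorphically onto $\iota(u)\mathcal{G}[D]$; thus $\delta$ lifts through $\mathcal{D}_u$ to a $D$-path ending at the unique $v'\in\mathcal{D}_u$ with $\iota(v')=\iota(v)$. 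If instead $\mathcal{D}_u$ is disjoint from $\mathcal{K}$, then by the cluster property it is a $D$-cluster or a full $D$-coset carrying a \emph{unique minimal support}, attained at a core vertex; here the plan is to use this support together with the boolean identities \eqref{eq:CE-boolean} and \eqref{eq:boolean extended} and the freeness condition \eqref{eq:freeness extended} to show that $\iota(v)$ already lies in $\iota(\mathcal{D}_u)$, so that again $\delta$ lifts to some $v'\in\mathcal{D}_u$ with $\iota(v')=\iota(v)$.

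In either case the lift $v'$ lies in $\mathcal{B}\cap\mathcal{C}$ (being reached from $u$ by a $D$-path) and satisfies $\iota(v')=\iota(v)$; since $v,v'\in\mathcal{B}$ and $\iota$ is injective on $\mathcal{B}$, we conclude $v'=v$, whence $v\in\mathcal{D}_u$ and therefore $\mathcal{D}_u=\mathcal{D}_v$. Applying this to all pairs $u,v$ shows that $\mathcal{B}\cap\mathcal{C}$ is connected, hence a single $(B\cap C)$-component. I expect the support-based lifting in the cluster case to be the main difficulty, since it is the only step that genuinely invokes the cluster property and the only place where the global unfolding geometry of Figure~\ref{fig:unfoldedCE} must be controlled; the reduction, the transport along $\iota$, and the injectivity of $\iota$ on single components are comparatively routine.
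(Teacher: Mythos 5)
Your reductions are fine as far as they go: restricting to pairs of vertices, pushing forward along $\iota:=\iota_{\mathbb{P}_A}$, invoking $2$-acyclicity of $\mathcal{G}[A]$ to get the $D$-path $\delta$, and the injectivity of $\iota$ on a single component (this last claim is true, though your one-line justification already leans on the cluster property, since only that property tells you a component disjoint from $\mathcal{K}$ is a union of constituent cosets through a common core vertex). But these steps merely repackage the statement: since $\mathcal{D}_u\subseteq\mathcal{B}$ and $\iota$ is injective on $\mathcal{B}$, your target ``$\iota(v)\in\iota(\mathcal{D}_u)$'' is literally equivalent to ``$v\in\mathcal{D}_u$'', which is the proposition itself. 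At exactly this point you write only that ``the plan is to use this support together with the boolean identities \eqref{eq:CE-boolean}, \eqref{eq:boolean extended} and the freeness condition \eqref{eq:freeness extended}'' --- and no argument is given. This is a genuine gap, not a deferrable verification: the boolean identities govern intersections of the subgraphs $\mathsf{CE}(G,\mathcal{K};B)$, i.e.\ of constituent cosets attached to the skeleton, and say nothing by themselves about how components floating away from the skeleton meet one another; controlling that is precisely what the unique-minimal-support machinery exists for, and it is where all the work of the paper's proof is concentrated.

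For comparison, the paper's handling of this case runs as follows: for $u_1,u_2\in\mathcal{B}\cap\mathcal{C}$ one takes their unique minimal supports $(F_1,v_1)$, $(F_2,v_2)$ together with the supports $(M,m)$, $(N,n)$ of $\mathcal{B}$ and $\mathcal{C}$; one shows $m\mathcal{G}[M]\cup n\mathcal{G}[N]\subseteq v_1\mathcal{G}[F_1]\cap v_2\mathcal{G}[F_2]$, so this intersection is non-empty; via \eqref{eq:freeness extended} this is transferred to the skeleton, where Lemma~\ref{lem:K2acyclic} (note: $2$-acyclicity of $\mathcal{K}$, not of $\mathcal{G}[A]$) identifies $v_1\mathcal{K}[F_1]\cap v_2\mathcal{K}[F_2]$ as an $(F_1\cap F_2)$-component; it follows that $v_1\mathcal{G}[F_1]\cup v_2\mathcal{G}[F_2]$ is a two-coset cluster $\mathsf{CL}(G[A],\{F_1,F_2\})$, inside which (after a retractability argument showing intersections such as $x\mathcal{G}[B_1]\cap v_1\mathcal{G}[F_1]$ are connected) Corollary~\ref{cor:BcapC for clusters} produces the required $(B\cap C)$-path $u_1\longrightarrow u_2$; a separate, easier support argument settles the case where exactly one of $\mathcal{B},\mathcal{C}$ meets $\mathcal{K}$. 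None of this is present in your proposal. One further, reparable, imprecision: even granting $\iota(v)\in\iota(\mathcal{D}_u)$, the phrase ``$\delta$ lifts'' is unjustified, because a proper $D$-cluster is not a complete $D$-graph, so the specific path $\delta$ may leave $\iota(\mathcal{D}_u)$ and fail to lift; what your argument actually needs (and all it needs) is a preimage $v'\in\mathcal{D}_u$ of $\iota(v)$, so you should phrase it that way.
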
 
\begin{proof} We consider several cases and start with the most
difficult one: suppose that both $\mathcal{B}$ and $\mathcal{C}$
have empty intersection with the skeleton $\mathcal{K}$. We need to
show that $\mathcal{B}\cap \mathcal{C}$ is connected. We know that
$\mathcal{B}$ is a $B$-cluster, $\mathcal{C}$ is a $C$-cluster, that
is, $\mathcal{B}\cong \mathsf{CL}(G[B],\{B_1,\dots,B_k\})$ and
$\mathcal{C}\cong\mathsf{CL}(G[C],\{C_1,\dots, C_l\})$ for
$B_i\subsetneq B$ and $C_j\subsetneq C$; it may also happen that
$k=1$ and/or $l=1$ in which case it may happen that $B_1=B$ and/or
$C_1=C$ (that is, $\mB$ and/or $\mC$ is a $B$-coset and/or
$C$-coset) --- the argument for this subcase is similar but simpler.
Let $x$ be a vertex in the core of $\mathcal{B}$, $y$ a vertex in
the core of $\mathcal{C}$, such that the unique minimal support $(M,m)$
of $\mathcal{B}$ is attained at  $x$,
and the unique minimal support $(N,n)$ of $\mathcal{C}$ is attained at $y$.
Then $\mathcal{B}=\bigcup_{i=1}^k x\mathcal{G}[B_i]$ and
$\mathcal{C}=\bigcup_{j=1}^l y\mathcal{G}[C_j]$. Let $u_1\ne u_2$ be
vertices of $\mathcal{B}\cap\mathcal{C}$; we may assume that $u_1\in
x\mathcal{G}[B_1]\cap y\mathcal{G}[C_1]$ and $u_2\in
x\mathcal{G}[B_2]\cap y\mathcal{G}[C_2]$.
The vertices $u_1$ and $u_2$ 
also have unique minimal support
$(F_1,v_1)$ and $(F_2,v_2)$, say. Then $M,N\subseteq F_1,F_2$ and even more holds, namely
\[
  \begin{array}{rl@{\;\subseteq\;}c@{\;=\;}c@{\;=\;}l}
    & m\mathcal{G}[M],n\mathcal{G}[N]  & m\mathcal{G}[F_1]& v_1\mathcal{G}[F_1] & n\mathcal{G}[F_1]
    \\
    \hnt\mbox{and } & m\mathcal{G}[M],n\mathcal{G}[N]  & m\mathcal{G}[F_2] & v_2\mathcal{G}[F_2]& n\mathcal{G}[F_2].
  \end{array}
\]
The equality $m\mG[F_1]=v_1\mG[F_1]$ follows from the fact that $(F_1,v_1)$ provides some support for $\mB$, while $(M,m)$ provides unique minimal support for $\mB$ hence $M\subseteq F_1$ and $m\in m\mG[M]\subseteq v_1\mG[F_1]$; likewise, $(F_1,v_1)$ provides some support for $\mC$ while $(N,n)$ provides unique minimal support for $\mC$, hence $N\subseteq F_1$ and  $n\in n\mG[N]\subseteq v_1\mG[F_1]$ which implies $v_1\mG[F_1]=n\mG[F_1]$. The remaining two equalities are proved in the same fashion.
From \[m\mG[M]\cup n\mG[N]\subseteq v_1\mG[F_1]\cap v_2\mG[F_2]\] we get
$v_1\mathcal{G}[F_1]\cap v_2\mathcal{G}[F_2]\ne
\varnothing$, which by \eqref{eq:freeness extended} implies  $v_1\mathcal{K}[F_1]\cap
v_2\mathcal{K}[F_2]\ne \varnothing$.
By Lemma~\ref{lem:K2acyclic}, this intersection is an $F$-component of
$\mathcal{K}$ for $F=F_1\cap F_2$, that is, 
\[v_1\mK[F_1]\cap v_2\mK[F_2]=m\mK[F]=n\mK[F].\] 
From the definition of the full coset extension $\sfCE(G,\mK;\bP_A)$
and \eqref{eq:boolean extended} it follows that the intersection
$v_1\mG[F_1]\cap v_2\mG[F_2]$ itself is connected (it is isomorphic
with $m\mG[F]=n\mG[F]$). So the subgraph of
$\mathsf{CE}(G,\mathcal{K};\mathbb{P}_A)$ formed by the 
union $v_1\mathcal{G}[F_1]\cup v_2\mathcal{G}[F_2]$ is isomorphic with
the cluster $\mathsf{CL}(G[A],\{F_1,F_2\})$, see Figure~\ref{fig:CE2acyclic1}.

Moreover, the cosets $x\mG[B_1]$ and $v_1\mG[F_1]$ both are contained in some constituent coset $w\mG[D]$. Indeed, $x\mG[B_1]$ arises as the intersection of the $B$-component $\mB$ with some
 constituent coset, say $w\mG[D]$, for some vertex $w\in \mK$ and $D\subsetneq A$. Then $(D,w)$ supports $u_1$, whence $F_1\subseteq D$ and $v_1\mG[F_1]\subseteq v_1\mG[D]=w\mG[D]$. Since $G[D]$ is retractable the intersection $x\mG[B_1]\cap v_1\mG[F_1]$ 
is  connected.
 The same holds for the intersections 
\[x\mG[B_2]\cap v_2\mG[F_2],\ y\mG[C_1]\cap v_1\mG[F_1] \mbox{ and } y\mG[C_2]\cap v_2\mG[F_2].\] 

Setting $B':=(B_1\cap F_1)\cup(B_2\cap F_2)$ and $C':=(C_1\cap F_1)\cup(C_2\cap F_2)$ we see that $u_1$ and $u_2$ belong to the same $B'$- as well as $C'$-component of 
the cluster $v_1\mG[F_1]\cup v_2\mG[F_2]$, the intersection of which is a $(B'\cap C')$-component of that cluster, by Corollary~\ref{cor:BcapC for clusters}. Consequently, $u_1$ and $u_2$ are in the same $(B'\cap C')$-component of 
$v_1\mG[F_1]\cup v_2\mG[F_2]$ and hence in the same $(B\cap C)$-component of $\mathsf{CE}(G,\mathcal{K};\mathbb{P}_A)$; the configuration is depicted in Figure~\ref{fig:CE2acyclic1}.
\begin{figure}[ht]
\pgfdeclarelayer{background layer}
\pgfdeclarelayer{foreground layer}
\pgfsetlayers{background layer,main,foreground layer}
\begin{tikzpicture}[xscale=0.6,yscale=0.4]
\begin{pgfonlayer}{background layer}
 \filldraw[ gray, nearly transparent] plot coordinates {(-1,4.33)(-1,8.3)(1,8.3)(1,4.33)(1,0)(1,-4.33)(1,-8.3)(-1,-8.3)(-1,-4.33)(-1,0)(-1,4.33)}; 
  \end{pgfonlayer}
 \begin{pgfonlayer}{main}
\filldraw[white] plot [smooth cycle] coordinates {(-3,0)(-1.5,-0.9)(0.3,-0.3)(0.3,2.5)(-2,5)(-5.5,6)};
\filldraw[white] plot [smooth cycle] coordinates {(3,0)(1.5,-0.9)(-0.3,-0.3)(-0.3,2.5)(2,5)(5.5,6)};
\filldraw[white] plot [smooth cycle] coordinates {(-3,0)(-1.5,0.9)(0.3,0.3)(0.3,-2.5)(-2,-5)(-5.5,-6)};
\filldraw[white] plot [smooth cycle] coordinates {(3,0)(1.5,0.9)(-0.3,0.3)(-0.3,-2.5)(2,-5)(5.5,-6)};
 \end{pgfonlayer}
\draw[gray, dashed, opacity=.4]plot coordinates{(1,4.25)(1,-4.25)};
\draw[gray, dashed, opacity=.4]plot coordinates{(-1,4.25)(-1,-4.25)};
\filldraw (-6,0) circle (2.5pt);
\filldraw(-2.5,0) circle(2.5pt);
\filldraw(2.5,0) circle (2.5pt);
\filldraw(6,0) circle (2.5pt);
\filldraw(0,2) circle (2.5pt);
\filldraw(0,6) circle (2.5pt);
\filldraw(0,-2) circle (2.5pt);
\filldraw(0,-6) circle (2.5pt);
\draw[decorate, decoration={snake}](-6,0)--(-2.5,0);
\draw[decorate, decoration={snake}](2.5,0)--(6,0);
\draw[decorate, decoration={snake}](0,-2)--(0,-6);
\draw[decorate, decoration={snake}](0,2)--(0,6);
\draw plot [smooth cycle] coordinates {(0.6,9.5)(2,0) (0.6,-9.5)(-7,0)};
\draw plot [smooth cycle] coordinates {(-0.6,9.5)(-2,0) (-0.6,-9.5)(7,0)};
\draw[red] plot [smooth cycle] coordinates {(-3,0)(-1.5,-0.9)(0.3,-0.3)(0.3,2.5)(-2,5)(-5.5,6)};
\draw[red] plot [smooth cycle] coordinates {(3,0)(1.5,-0.9)(-0.3,-0.3)(-0.3,2.5)(2,5)(5.5,6)};
\draw[purple] plot [smooth cycle] coordinates {(-3,0)(-1.5,0.9)(0.3,0.3)(0.3,-2.5)(-2,-5)(-5.5,-6)};
\draw[purple] plot [smooth cycle] coordinates {(3,0)(1.5,0.9)(-0.3,0.3)(-0.3,-2.5)(2,-5)(5.5,-6)};
\draw[red](-3,3)node{$x\mathcal{G}[B_1]$};
\draw[red](3,3) node{$x\mathcal{G}[B_2]$};
\draw[purple](-3,-3)node{$y\mathcal{G}[C_1]$};
\draw[purple](3,-3)node{$y\mathcal{G}[C_2]$};
\draw[gray](0,6.5) node[above] {$m\mathcal{K}[F]$};
\draw(0,5.85) node[above]{$m$};
\draw(0,-5.85) node [below]{$n$};
\draw(0,2) node [below]{$x$};
\draw(0,-2) node [above]{$y$};
\draw(-6,0) node [below]{$v_1$};
\draw(-2.5,0) node [below] {$u_1$};
\draw(2.5,0) node [below]{$u_2$};
\draw(6,0) node [below] {$v_2$};
\draw (0,4) node [above right] {$M$};
\draw(0,-4) node [below right]{$N$};
\draw (-4.25,0) node [above]{$F_1$};
\draw (4.25,0) node [above] {$F_2$};
\draw (-7,0) node[left] {$v_1\mathcal{G}[F_1]$};
\draw (7,0) node[right] {$v_2\mathcal{G}[F_2]$};
\end{tikzpicture}
\caption{Configuration as in the proof of Proposition~\ref{prop:CE=2acyclic} (general case)}\label{fig:CE2acyclic1}
\end{figure}

Next we consider the case when $\mC$ has empty intersection with  the skeleton $\mK$ (as in the previous case), but $\mathcal{B}$ has not. Then $\mC\cong \sfCL(G[C],\{C_1,\dots,C_l\})$ and $\mB=v\mG[B]$ for some vertex $v\in \mK$. We let $u_1\ne u_2$ be vertices in $\mB\cap \mC$, and we may assume that $u_1\in\mC_1:=y\mG[C_1]$ and $u_2\in  \mC_2:=y\mG[C_2]$ (as in the previous case), where $y$ is a vertex in the core of $\mC$ which attains minimal support of $\mC$.
\begin{figure}[ht]
\begin{tikzpicture}[xscale=0.5,yscale=0.4]
\filldraw (-6,0) circle (2.5pt);
\filldraw(6,0) circle (2.5pt);
\filldraw(0,6) circle (2.5pt);
\filldraw(0,0) circle (2.5pt);
\draw plot [smooth cycle] coordinates {(0,8)(-5,5)(-8,0)(-4,0)(0,4)};
\draw plot [smooth cycle] coordinates {(0,8)(5,5)(8,0)(4,0)(0,4)};
\draw[decorate, decoration={snake}](0,6)--(0,0);
\draw plot [smooth cycle] coordinates {(0,7)(7,0)(0,-1.5)(-7,0)};
\draw(0,6) node[above]{$y$};
\draw(-6,0) node [above]{$u_1$};
\draw(6,0) node [above] {$u_2$};
\draw(0,0) node[below]{$v$};has the cluster property
\draw(-3.7,4.1)node [above left]{$\mathcal{C}_1$};
\draw(3.8,4.1)node [above right]{$\mathcal{C}_2$};
\draw(2,0) node {$v\mathcal{G}[B]$};
\end{tikzpicture}
\caption{Configuration as in the proof of Proposition~\ref{prop:CE=2acyclic} (special case)}\label{fig:CE2acyclic2}
\end{figure}
In this case $(B,v)$ supports $u_1$ as well as $u_2$ and therefore also $y$,
so that $u_1,y,u_2\in \mB=v\mG[B]$, see
Figure~\ref{fig:CE2acyclic2}. For the same reason as in the previous
case, the intersections $y\mG[C_1]\cap v\mG[B]$ and $y\mG[C_2]\cap
v\mG[B]$ both are connected. Hence there is a $(B\cap C)$-path
$u_1\longrightarrow y$ and also a $(B\cap C)$-path $y\longrightarrow
u_2$, and altogether there is a $(B\cap C)$-path $u_1\longrightarrow
u_2$.

Finally, the case when $\mB$ as well as $\mC$ have non-empty intersection with the skeleton $\mK$ is obvious, since in this case $\mB\cap \mC$ is a $(B\cap C)$-coset. 
\end{proof}

We are led to a further construction. Let $\mK$ be admissible for
$\pss{A}$-coset extension
and suppose that the full
$\pss{A}$-coset extension
$\sfCE(G,\mK;\mathbb{P}_A)$
has the cluster property. For a vertex $v\in \sfCE(G,\mK;\mathbb{P}_A)$ and some $B\subsetneq A$ the $B$-component $\mB$ of $v$ is either a $B$-coset $v\mG[B]$ (in this case, $\mB$ may or may not intersect with the skeleton $\mK$) or a proper $B$-cluster (in which case it does not intersect with the skeleton $\mK$). In any case, $\mB$ embeds into $\mG[B]$ via some graph monomorphism $\iota\colon \mB\hookrightarrow \mG[B]$ (which is unique if one additionally assumes that $\iota(v)=1$). We define the 
\emph{$B$-augmentation at $v$ of $\sfCE(G,\mK;\bP_A)$} by
\[\sfCE(G,\mK;\bP_A)\circv
  \mG[B]:=\left.\sfCE(G,\mK;\bP_A)\sqcup\mG[B]\right./\;\Omega\]
where $\Omega$ is the congruence whose 
non-singleton congruence classes are the two-element sets
$\{x,\iota(x)\}$ for $x\in \mB$.
We note that $\sfCE(G,\mK;\bP_A)\circv\mG[B]$ can be written as the
union $$\sfCE(G,\mK;\bP_A)\cup v\mG[B]$$ of
its two subgraphs $\sfCE(G,\mK;\bP_A)$ and $v\mG[B]$ whose intersection is just the $B$-component $\mB$ of $v$ in $\sfCE(G,\mK;\bP_A)$.
\begin{Prop}\label{prop:2acyclicexpandedAext} Let $B,C\subsetneq A$
and $\mK$ be admissible for
$\pss{A}$-coset extension
and such that the full
  $\pss{A}$-coset extension
  $\sfCE(G,\mK;\bP_A)$
enjoys the cluster property. Then every $C$-component of
any $B$-augmented  
full coset extension $\sfCE(G,\mK;\bP_A)\circv \mG[B]$ is either a
$C$-coset, a $B\cap C$-coset, a $C$-cluster or a $(B\cap C)$-augmented $C$-cluster.
\end{Prop}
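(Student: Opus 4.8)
The plan is to follow the strategy of Corollary~\ref{cor:CcomponentBexpandedcluster}, with the internal cluster structure used there now replaced by the $2$-acyclicity of the full coset extension established in Proposition~\ref{prop:CE=2acyclic}. Writing $\mathcal{Y}:=\sfCE(G,\mK;\bP_A)\circv\mG[B]$, recall that $\mathcal{Y}=\sfCE(G,\mK;\bP_A)\cup v\mG[B]$ with the two subgraphs meeting precisely in the $B$-component $\mB$ of $v$ in $\sfCE(G,\mK;\bP_A)$. If $\mB$ is already a full $B$-coset the augmentation is vacuous and the statement reduces to describing the $C$-components of $\sfCE(G,\mK;\bP_A)$; so I may assume $\mB$ is a proper $B$-cluster. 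As a preliminary I would observe that \emph{every} $C$-component of $\sfCE(G,\mK;\bP_A)$ is a $C$-coset or a $C$-cluster: a component meeting the skeleton contains a vertex $v'\in\mK$ together with the constituent coset $v'\mG[C]$, which, being a full coset, is $C$-closed and hence equals that $C$-component; a component disjoint from the skeleton is covered by the cluster property.

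First I would treat a vertex $u\in\sfCE(G,\mK;\bP_A)$ with $C$-component $\mathcal{C}_0$ there. By Proposition~\ref{prop:CE=2acyclic} the intersection $\mathcal{C}_0\cap\mB$ is connected, that is, a single $(B\cap C)$-component, and inside the completed coset $v\mG[B]$ the $C$-component of any $w\in\mathcal{C}_0\cap\mB$ is the full $(B\cap C)$-coset $w\mG[B\cap C]$, which meets $\mB$ again only in $\mathcal{C}_0\cap\mB$. Hence the new edges of $v\mG[B]$ can re-enter the old graph solely back into $\mathcal{C}_0$, so distinct $C$-components of $\sfCE(G,\mK;\bP_A)$ are never merged. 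If $\mathcal{C}_0\cap\mB=\varnothing$ the component is untouched, a $C$-coset or $C$-cluster. Otherwise the $C$-component of $u$ in $\mathcal{Y}$ equals $\mathcal{C}_0\cup w\mG[B\cap C]$, attached along $\mathcal{C}_0\cap\mB$. If $\mathcal{C}_0$ is a full $C$-coset, its internal $(B\cap C)$-component of $w$ is itself a full $(B\cap C)$-coset $P=\mathcal{C}_0\cap\mB\subseteq\mB\subseteq v\mG[B]$, forcing $P=w\mG[B\cap C]$; the attached coset is then already contained in $\mathcal{C}_0$ and the component remains a $C$-coset. If $\mathcal{C}_0$ is a proper $C$-cluster, $\mathcal{C}_0\cup w\mG[B\cap C]$ is precisely a $(B\cap C)$-augmentation of that $C$-cluster.

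It remains to treat $u\in v\mG[B]\setminus\mB$, where the $C$-component of $u$ inside $v\mG[B]$ is the full $(B\cap C)$-coset $u\mG[B\cap C]$. If $u\mG[B\cap C]\cap\mB=\varnothing$ this coset is disjoint from $\sfCE(G,\mK;\bP_A)$ and $\mathcal{C}=u\mG[B\cap C]$ is a plain $(B\cap C)$-coset. If $u\mG[B\cap C]$ meets $\mB$ at some $w$, let $\mathcal{C}_0$ be the $C$-component of $w$ in $\sfCE(G,\mK;\bP_A)$; then, exactly as before, $\mathcal{C}=\mathcal{C}_0\cup u\mG[B\cap C]$ glued along the single $(B\cap C)$-component $\mathcal{C}_0\cap\mB$. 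The step I expect to be the crux is ruling out that $\mathcal{C}_0$ is a full $C$-coset: were it one, its internal $(B\cap C)$-component of $w$ would be a full $(B\cap C)$-coset $P=\mathcal{C}_0\cap\mB\subseteq\mB\subseteq v\mG[B]$, whence $P=u\mG[B\cap C]$ and therefore $u\in P\subseteq\mB$, contradicting $u\notin\mB$. Thus $\mathcal{C}_0$ is a proper $C$-cluster and $\mathcal{C}$ is a $(B\cap C)$-augmented $C$-cluster. The four outcomes --- $C$-coset, $(B\cap C)$-coset, $C$-cluster, and $(B\cap C)$-augmented $C$-cluster --- exhaust the cases and prove the claim.
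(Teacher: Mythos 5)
Your proof is correct and takes essentially the same approach as the paper's: decompose a $C$-component of the augmented graph into its part inside $\sfCE(G,\mK;\bP_A)$ and its part inside the attached coset $v\mG[B]$, use Proposition~\ref{prop:CE=2acyclic} to identify the overlap as a single $(B\cap C)$-component, recognise the part inside $v\mG[B]$ as the full coset $w\mG[B\cap C]$, and eliminate the degenerate cases by the same coset-containment arguments. The only point worth noting (and it applies equally to the paper's own wording) is that your assertion that $w\mG[B\cap C]$ meets $\mB$ only in $\mC_0\cap\mB$ relies, beyond Proposition~\ref{prop:CE=2acyclic}, on the fact that the intersection of the $B$-cluster $\mB$ with an ambient $(B\cap C)$-coset of $v\mG[B]$ is connected, i.e.\ on Corollary~\ref{cor:Bcomponentscluster} (which in turn rests on retractability of $G[B]$).
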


\begin{proof}
Let $\mC$ be a $C$-component of $\sfCE(G,\mK;\bP_A)\circv\mG[B]$.
If $\mC\subseteq \sfCE(G,\mK;\bP_A)$ or $\mC\subseteq v\mG[B]$ we are done: $\mC$ happens to be a $C$-coset or a $B\cap C$-coset or a $C$-cluster. Let us assume that $\mC$ is contained neither in $\sfCE(G,\mK;\bP_A)$ nor in $v\mG[B]$. We have
\[\mC=\underbrace{(\sfCE(G,\mK;\bP_A)\cap\mC)}_{\mC_1}\cup
 \underbrace{(v\mG[B]\cap\mC)}_{\mC_2}\]
  and $\mC_1$ is a proper $C$-cluster (if it were a $C$-coset it would coincide with $\mC$, which would  be contained in $\sfCE(G,\mK;\bP_A)$). Let $\mB_v$ be the $B$-component of $v$ in $\sfCE(G,\mK;\bP_A)$. 
Our assumption implies that $\mC\cap\mB_v\ne \varnothing$.  Let $w$ be a vertex of $\mC\cap\mB_v$. By Proposition~\ref{prop:CE=2acyclic},    $\mC\cap \mB_v=\mC_1\cap \mB_v$ is the $(B\cap C)$-component of $w$ in $\sfCE(G,\mK;\bP_A)$, which is a $(B\cap C)$-cluster or a $(B\cap C)$-coset. Moreover,
\[\mC_2=\mC\cap v\mG[B]=\mC\cap w\mG[B]=w\mG[B\cap C].\] 
If  $\mC_1\cap \mB_v$ were a $(B\cap C)$-coset, then it would coincide
with $w\mG[B\cap C]$ and again $\mC\subseteq
\sfCE(G,\mK;\bP_A)$. Hence, under our assumption, $\mC_1\cap\mB_v$ is
indeed a proper $(B\cap C)$-cluster.
So we see that $\mC=\mC_1\cup w\mG[B\cap C]$ and 
\[\mC_1\cap w\mG[B\cap C]=\sfCE(G,\mK;\bP_A)\cap w\mG[B\cap C]\] 
is the $(B\cap C)$-component of $w$ in $\sfCE(G,\mK;\bP_A)$.
Altogether this just means that 
$\mC=\mC_1\circw \mG[B\cap C]$, that is, $\mC$ is the $(B\cap C)$-augmentation of the $C$-cluster $\mC_1$ at $w$.
\end{proof}

\section{Two crucial inductive procedures}
\label{sec:2results}
In this section we formulate and prove two important technical results. They will be
essential to set up the inductive procedure to gain the series
\eqref{eq:series of G}. In order to do so, we need another crucial
definition {(Definition~\ref{def:bridge free} below)}. Assume, as above, that $|A|\ge 2$, that $G[A]$ is
retractable and  that $\mK\subseteq \mG[A]$ is admissible for
$\pss{A}$-coset extension.

{\begin{Def}[embedded coset extension]\label{def:embedded}
  The full coset extension \[\mathsf{CE}(G,\mathcal{K};\mathbb{P}_A)\] is \emph{embedded}
if the morphism ${\iota_{\mathbb{P}_A}}\colon \mathsf{CE}(G,\mathcal{K};\mathbb{P}_A)\to \mathcal{G}[A]$ 
(of Proposition~\ref{prop:morphismCEtoGA})  is an embedding.
\end{Def}
\begin{Def}[bridge freeness]\label{def:bridge free}
The embedded full coset extension \[\mathsf{CE}(G,\mathcal{K};\mathbb{P}_A)\] is \emph{bridge free} in $\mG[A]$ if
for every $B\subsetneq A$, if two vertices $u,v\in \mathsf{CE}(G,\mathcal{K};\mathbb{P}_A)\subseteq \mathcal{G}[A]$ (as per Definition~\ref{def:embedded}) are $B$-connected in $\mathcal{G}[A]$, then they are $B$-connected even in $\mathsf{CE}(G,\mathcal{K};\mathbb{P}_A)$. 
\end{Def}}

  The two above-mentioned technical results will, in fact, be two
  inductive  procedures --- \emph{forward induction} (Theorem~\ref{thm:forward induction}) and \emph{upward induction} (Theorem~\ref{thm:upward induction}). Roughly speaking, forward induction
  guarantees that bridge freeness implies the cluster property --- in
  the same group but with the number of letters being increased by one;
  upward induction, on the other hand, guarantees that the cluster
  property implies bridge freeness --- with respect to the same set of
  letters but for the next group.
  For the construction of the series~\eqref{eq:series of G}, these two
  procedures are applied alternatingly; the essence of the whole procedure is
  as follows (details will be worked out in
  Section~\ref{subsec:HktoGk+1}).
  Suppose we have already defined the $k$-retractable group $G_k$.
  We apply Theorem~\ref{thm:first basic} and produce a
  $(k+1)$-retractable and $k$-stable expansion $H_k$ of $G_k$. Then
  take any connected $A$-subgraph $\mL$ of the Cayley graph $\mH_k$ of
  $H_k$ for a subset $A\subseteq E$ of size $k+1$ and assume that
  $\mL$ is admissible for $\pss{A}$-coset extension (with respect to
  $H_k$). For $B\subsetneq A$, all $B$-components $v\mL[B]$ of $\mL$
  are subgraphs of $\mH_k[B]$ and hence of $\mG_k[B]$, by
  $k$-stability. Assuming inductively that all corresponding coset
  extensions  $\sfCE(G_k,v\mL[B];\mathbb{P}_B)$ are bridge-free, the
  same is true for the corresponding coset extensions
  $\sfCE(H_k,v\mL[B];\mathbb{P}_B)$ with respect to
  $H_k$. \textsl{Forward induction} (Theorem~\ref{thm:forward
    induction})  now implies that the coset extension
  $\sfCE(H_k,\mL;\mathbb{P}_A)$ of the $A$-graph $\mL$ has the cluster
  property. Finally, \textsl{upward induction}
  (Theorem~\ref{thm:upward induction}) implies that for a suitable
  $k$-stable expansion $G_{k+1}$ of $H_k$, any $\mG_{k+1}$-cover
$\hat{\mL}$ of $\mL$
  is admissible for $\pss{A}$-coset extension (with respect to $G_{k+1}$) and
  that the coset extension
$\sfCE(G_{k+1},\hat{\mL};\mathbb{P}_A)$
is bridge-free (for a
  precise definition of \emph{cover} see Definition~\ref{def:cover} below).

The following lemma
is the essential technical step to obtain the inductive procedure
\emph{forward induction} (Theorem~\ref{thm:forward induction}).
For this lemma take into account Lemma~\ref{lem:CEBsubsetCEA}: 
if some subgraph $\mL\subseteq \mH[A]$ of the Cayley graph
of the group $H$ is admissible for $\pss{A}$-coset extension,
        then all its $B$-components $v\mL[B]$, for $B\subsetneq A$,
        are admissible for $\pss{B}$-coset extension and  
       the morphisms of Proposition~\ref{prop:morphismCEtoGA}
       are embeddings $\sfCE(H,v\mL[B];\bP_B)\hookrightarrow v\mH[B]$.
 
\begin{Lemma}\label{lem:forward induction} Let $H$ be
  an $E$-group, $A\subseteq E$, $|A|\ge 3$ and suppose that $H[A]$ is
  retractable. Let $\mL\subseteq \mH[A]$ be a connected $A$-graph
  which is admissible for
$\pss{A}$-coset extension.
Assume that for all $B\subsetneq A$ and every vertex $v\in \mL$,
the full $\pss{B}$-coset extension $\sfCE(H,v\mL[B];\bP_B)$
\begin{enumerate}
\item
is {embedded and} bridge-free in $\mH[B]$, and 
\item
has the cluster property.
\end{enumerate}
Then the full
$\pss{A}$-coset extension
$\sfCE(H,\mL;\bP_A)$ has the cluster property.
\end{Lemma}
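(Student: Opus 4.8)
The plan is to verify the two clauses of Definition~\ref{def:cluster property} for a fixed proper subset $B\subsetneq A$ and a fixed $B$-component $\mB$ of $\sfCE(H,\mL;\bP_A)$ with $\mB\cap\mL=\varnothing$; the $B$-components meeting the skeleton are irrelevant for clause~(1), and each of them is anyway a full $B$-coset, since the constituent cosets $v\mH[B]$ are $B$-complete. No induction on $\lvert A\rvert$ is carried out inside the lemma: the hypotheses already furnish, for every $B'\subsetneq A$ and every $v\in\mL$, both bridge-freeness and the cluster property of $\sfCE(H,v\mL[B'];\bP_{B'})$. By Lemma~\ref{lem:CEBsubsetCEA} these smaller coset extensions are precisely the portions of $\sfCE(H,\mL;\bP_A)$ lying inside the single constituent cosets $v\mH[B']$, so every local computation inside one constituent coset may be run with the inductive hypotheses and, crucially, with Proposition~\ref{prop:CE=2acyclic} available for those smaller extensions (whose cluster property we are given).

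For clause~(1) I would describe $\mB$ through its constituent cosets. Writing $\sfCE(H,\mL;\bP_A)=\bigcup_{C\subsetneq A}\sfCE(H,\mL;C)$ and using the Boolean law \eqref{eq:boolean extended}, each vertex $x\in\mB$ has a unique minimal support $(C_x,w_x)$, and the $(B\cap C_x)$-coset $x\mH[B\cap C_x]$ lies inside the constituent coset $w_x\mH[C_x]$ and inside $\mB$. If some $x$ has $B\subseteq C_x$ then $\mB$ is the full $B$-coset $x\mH[B]$; otherwise all these cosets are proper, they cover $\mB$, and their maximal members are the candidate constituents of a $B$-cluster. The task is then to show they share a common core. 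Pairwise, two such cosets meet (if at all) in a coset of the intersection of their label sets: inside a single constituent coset this is immediate from retractability of $H[C]$ (Remark~\ref{rem:2and3acyc}), and across distinct constituent cosets it follows from admissibility of $\mL$ via \eqref{eq:freeness extended}, Lemma~\ref{lem:K2acyclic}, and the cluster property of the relevant $\sfCE(H,v\mL[B'];\bP_{B'})$ through Proposition~\ref{prop:CE=2acyclic}. Passing from pairwise intersections to one global common intersection is a $3$-acyclicity step, modelled on the proof of Lemma~\ref{lem:3-acyclic} and on the argument of Proposition~\ref{prop:CE=2acyclic}; this exhibits $\mB$ as a genuine $B$-cluster $\bigcup_i v_i\mH[D_i]$ with core $m\mH[M]$, $M=\bigcap_i D_i$.

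For clause~(2) I would take the minimal support $(M,m)$ realised at a core vertex $x_0\in\bigcap_i v_i\mH[D_i]$ and show it is the unique minimal support of the whole component. A core vertex lies in every constituent $v_i\mH[D_i]\subseteq w_i\mH[C_i]$, hence carries each support $(C_i,w_i)$; comparing these accesses through the uniqueness of the vertex-supports and the Boolean law \eqref{eq:boolean extended} forces $(M,m)$ to sit below the support of every vertex of $\mB$, and admissibility excludes two incomparable candidates. Here bridge-freeness of the smaller extensions is indispensable: it guarantees that each $\sfCE(H,v\mL[B'];\bP_{B'})$ embeds faithfully into the constituent coset $v\mH[B']$ and that any $B'$-connection visible in the ambient coset $\mH[B']$ is already realised inside the extension. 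This is what lets us transport the $2$- and $3$-acyclicity of the retractable full cosets down to $\sfCE(H,\mL;\bP_A)$ and rules out a spurious extra constituent (a ``bridge'') that would merge or split the $v_i\mH[D_i]$ and thereby destroy both the cluster shape and the comparability of supports.

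The main obstacle is clause~(2): establishing that the minimal support is unique and attained on the core. Clause~(1) is essentially a bookkeeping of intersections governed by $2$- and $3$-acyclicity, which is available locally (retractability inside each constituent coset) and globally (freeness \eqref{eq:freeness extended} together with the cluster property of the smaller extensions). The delicate work lies in translating the purely combinatorial connectivity inside $\sfCE(H,\mL;\bP_A)$ into statements about how the skeleton is accessed, and this is exactly where the bridge-freeness hypothesis does the heavy lifting. I therefore expect the bulk of the argument to consist in carefully matching supports across the several constituent cosets through which $\mB$ weaves, applying \eqref{eq:boolean extended} and \eqref{eq:freeness extended} repeatedly and invoking the faithful, connectivity-reflecting embeddings supplied by bridge-freeness of the $\sfCE(H,v\mL[B'];\bP_{B'})$.
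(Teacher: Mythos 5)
Your scaffolding agrees with the paper's proof: restrict attention to a $B$-component $\mB$ disjoint from the skeleton, cover it by pieces $\mB_i=\mB\cap v_i\mH[A_i]$ lying in constituent cosets, and feed in hypotheses (1) and (2) for the smaller coset extensions sitting inside those cosets. The genuine gap is at the decisive step, which you describe as ``passing from pairwise intersections to one global common intersection is a $3$-acyclicity step''. That step, as stated, is not available. First, the pieces of $\mB$ are not known to intersect pairwise: connectivity of $\mB$ only yields a chain in which \emph{consecutive} pieces meet, and the configuration $\mB_1\cap\mB_2\ne\varnothing$, $\mB_2\cap\mB_3\ne\varnothing$, $\mB_1\cap\mB_3=\varnothing$ --- precisely what must be excluded --- does not fall under the hypotheses of \eqref{eq:2acyclic} or \eqref{eq:3acyclic}, so no acyclicity statement rules it out. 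Second, acyclicity is a property of the Cayley graph $\mH[A]$, whereas the pieces live in $\sfCE(H,\mL;\bP_A)$, which deliberately \emph{unfolds} $\mH[A]$ (Remark~\ref{rmk:unfolding}): cosets meeting in $\mH[A]$ may be disjoint in the extension, and transporting a common point found in $\mH[A]$ back into the extension is exactly the problem. Acyclicity of the big extension itself is a \emph{consequence} of its cluster property (Proposition~\ref{prop:CE=2acyclic}), i.e.\ of the statement being proved, so invoking it for $\sfCE(H,\mL;\bP_A)$ would be circular; your licence to use Proposition~\ref{prop:CE=2acyclic} covers only the smaller extensions, and those do not govern how pieces in \emph{distinct} constituent cosets meet.

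What closes this gap in the paper is a mechanism your proposal never identifies: a support-matching induction along the connected chain, driven by retractability used as a rewriting device. For adjacent pieces $\mB_1,\mB_2$ the hypotheses give unique minimal supports $(D_1,u_1)$, $(D_2,u_2)$ of the pieces and $(D,u)$ of $\mB_1\cap\mB_2$; taking a vertex $s\in\mB_1\cap\mB_2$ attaining $(D,u)$, a vertex $s_1\in\mB_1$ attaining $(D_1,u_1)$, a $D$-path $s_1\longrightarrow s$ (routed via the skeleton) and a $(B\cap A_1)$-path $s_1\longrightarrow s$ inside $\mB_1$, retractability of $H[A_1]$ lets one delete from the latter path all letters outside $D$, which forces $s_1\in u\mH[D]\subseteq v_1\mH[A_1]\cap v_2\mH[A_2]$, hence $s_1\in\mB_1\cap\mB_2$ and $D_1=D=D_2$. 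This sliding of attaining vertices into the intersections is what propagates along the chain and produces $s_1,\dots,s_n\in\bigcap_i\mB_i$ with one common support $(D,u)$, proving clauses (1) and (2) of the cluster property \emph{simultaneously}; only then does Corollary~\ref{cor:Bcomponentscluster} identify $\mB$ as a $B$-cluster. A second, smaller omission: to apply hypotheses (1) and (2) to a piece $\mB_i$ at all, one needs $\mB_i\cap\sfCE(H,v_i\mL[A_i];\bP_{A_i})\ne\varnothing$, i.e.\ a vertex of $\mB_i$ supported by fewer than $|A|-1$ letters; the paper secures this by its opening edge argument (an edge of $\mB$ leaving a constituent coset witnesses a smaller support), which also disposes of the degenerate case where $\mB$ lies in a single constituent coset. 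Your dichotomy (some $C_x\supseteq B$ versus all supports proper) does not supply this.
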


\begin{proof}
Let $B\subsetneq A$,  let $\mB$ be a $B$-component of
$\sfCE(H,\mL;\bP_A)$ and suppose that $\mB$ has empty intersection
with the skeleton $\mL$. We first show the following: if $\mB$ is not
fully contained in any one constituent coset of $\sfCE(H,\mL;\bP_A)$,
then the intersection of $\mB$ with \textsl{any} constituent coset is
either empty or
contains a vertex that is supported by fewer than $|A|-1$ elements.
Indeed, let $\mB\cap v_1\mH[A_1]\ne\varnothing$, w.l.o.g.\
$|A_1|=|A|-1$, and assume that $\mB$ is not contained in $v_1\mH[A_1]$.
Then some vertex $s_1\in \mB\cap
v_1\mH[A_1]$ must be connected by an edge $e$ in $\mathcal{B}$
to some vertex $s_2\in (\mB\cap
v_2\mH[A_2])\setminus v_1\mH[A_1]$ in some other
constituent coset $v_2\mH[A_2]$, for some
$A_2\ne A_1$.
Since $s_2\notin v_1\mH[A_1]$, also $e\notin v_1\mH[A_1]$.
Then $e$ belongs to a coset $v_3\mH[A_3]$
(possibly coinciding with $v_2\mH[A_2]$)
with $A_3\ne A_1$. In any case, $s_1,s_2\in v_3\mH[A_3]$ (if a graph
contains an edge then also its initial and terminal vertices). It
follows that $s_1$ is supported by $(A_3,v_3)$, that is, $s_1\in
v_1\mH[A_1]\cap v_3\mH[A_3]=v\mH[A_1\cap A_3]$ for some vertex
$v$, and $|A_1\cap A_3| < |A|-1$. 

Therefore,  if no vertex of $\mB$ has support of size smaller than
$|A|-1$, then $\mB$ is contained in some constituent coset
$v_1\mH[A_1]$ with $|A_1|=|A|-1$, and therefore is a $B\cap
A_1$-coset with minimal support 
$(A_1,v_1)$. 

We are left with the case that $\mB$ admits support of size strictly
smaller than $|A|-1$. 
We collect some constituent cosets $v_1\mH[A_1],\dots,v_n\mH[A_n]$ of $\sfCE(H,\mL;\bP_A)$
for generator sets $A_i \subsetneq A$ of size $|A_i|=|A|-1$ 
such that $\mB\subseteq \bigcup_{i=1}^nv_i\mH[A_i]$ and we assume that the choice of the constituent cosets $v_i\mH[A_i]$ is minimal for $\mB\subseteq \bigcup_{i=1}^nv_i\mH[A_i]$ 
in the sense that $\mB$ is not contained in any union of fewer than $n$ constituent cosets. Then
\[\mB=\mB\cap\left(\bigcup_{i=1}^n v_i\mH[A_i]\right)=\bigcup_{i=1}^n (\mB\cap v_i\mH[A_i])=\bigcup_{i=1}^n\mB_i\]
for $\mB_i=\mB\cap v_i\mH[A_i]$. 
Every $\mB_i$ is a non-empty $B_i$-coset subgraph of $v_i\mH[A_i]$
where $B_i=B\cap A_i$ and all $B_i$ have size at most $|A|-2$. (If for
some $i$, $|B_i|=|A|-1$ then $B_i=A_i$ and $\mB_i=v_i\mH[A_i]$ would
have non-empty intersection with the skeleton $\mL$.) In addition,
every $\mB_i$ has a vertex supported by fewer than $|A_i|=|A|-1$
letters: if $n=1$ this is immediate and if $n>1$ then $\mB$ is not
contained in a single constituent coset,
and the situation is as discussed  at the start of the proof.

We need to verify items (1) and (2) of Definition~\ref{def:cluster
  property}. For $i=1,\dots,n$ denote by $\mA_i$ the $A_i$-component
$v_i\mL[A_i]$ of $v_i$ in $\mL$. By Lemma~\ref{lem:CEBsubsetCEA},
$\mA_i$ is admissible for
$\pss{A_i}$-coset extension
and the full $\pss{A_i}$-coset extension
$\sfCE(H,\mA_i;\bP_{A_i})$
embeds into $v_i\mH[A_i]$ (via the mapping of
Proposition~\ref{prop:morphismCEtoGA}).  Since $\mB_i$
admits vertices supported by
fewer than $|A_i|=|A|-1$ letters, we have that $\mB_i\cap\sfCE(H,\mA_i;\bP_{A_i})\ne\varnothing$ --- once more we take into account  that
\[\sfCE(H,\mA_i;\bP_{A_i})\subseteq v_i\mH[A_i]\subseteq \sfCE(H,\mL;\bP_A).\]
Bridge freeness of $\sfCE(H,\mA_i;\bP_{A_i})$ (assumption (1)) implies
that \[\mB_i\cap\sfCE(H,\mA_i;\bP_{A_i})\] is connected. By assumption
(2) therefore, $\mB_i\cap\sfCE(H,\mA_i;\bP_{A_i})$ has unique minimal
support in $\sfCE(H,\mA_i;\bP_{A_i})$, say $(D_i,u_i)$. But then the
pair $(D_i,u_i)$ also provides unique minimal support of $\mB_i$  in
$\sfCE(H,\mL;\bP_A)$. If $n=1$ we are already done; so let us assume that $n\ge 2$. 
Minimality of $(D_i,u_i)$ implies in particular that any path
connecting $\mB_i$ to $\mL$ in
$\sfCE(H,\mL;\bP_A)$
must use (at least) all labels in $D_i$ and, in case it uses only labels from $D_i$, 
necessarily leads to the $D_i$-component $u_i\mA_i[D_i]$.
So, for every  $i$, there exist vertices $s_i\in \mB_i$, $u_i\in \mA_i$ and a
word $m_i\in \til{D_i}^*$ labelling a path $s_i\longrightarrow u_i$
which runs entirely inside the coset $u_i\mH[D_i]$, which in turn is
contained in $v_i\mH[A_i]=u_i\mH[A_i]$.

Since $\mB=\bigcup_{i=1}^n\mB_i$ is connected, there are $i,j$ such
that $\mB_i\cap\mB_j\ne\varnothing$; after some renumbering we may
assume that $\mB_1\cap \mB_2\ne\varnothing$. Then also
$v_1\mH[A_1]\cap v_2\mH[A_2]\ne \varnothing$;
from (\ref{eq:freeness extended}) we get $\mA_1\cap\mA_2\ne\varnothing$ and by  Lemma~\ref{lem:K2acyclic}, $\mA_1\cap\mA_2$ 
is an $(A_1\cap A_2)$-component
of $\mL$, say $v\mL[A_1\cap A_2]$ for some $v\in \mA_1\cap \mA_2$. 
From~\eqref{eq:boolean extended} it follows that
$v_1\mH[A_1]\cap
v_2\mH[A_2]=v\mH[A_1\cap A_2]$. The intersection
$\mB_1\cap\mB_2$ is a $(B\cap A_1\cap A_2)$-coset in $v\mH[A_1\cap
A_2]$. Similarly as for $\mB_1$ one argues that $\mB_1\cap \mB_2$ has
unique minimal support in $\sfCE(H,\mA_1\cap\mA_2;\bP_{A_1\cap A_2})$,
$(D,u)$ say, which (as for $\mB_1$)  provides unique minimal support
of $\mB_1\cap \mB_2$ in $\sfCE(H,\mL;\bP_A)$. Let $s\in \mB_1\cap
\mB_2$ be a vertex which attains the support $(D,u)$. So far, the
situation is depicted as in Figure~\ref{fig:forward1}.
\begin{figure}[ht]
\begin{tikzpicture}[scale=0.5]
\draw (-7,-2)--(7,-2);
\draw (7.2,-2.1) node [above left]{$\mathcal{L}$};
\filldraw (-4,-2) circle (2.5pt);
\draw (-4,-2) node[below]{$u_1$};
\draw plot [smooth cycle] coordinates {(-5,-3) (1,-3) (1,3) (-5,3)};
\draw plot [smooth cycle] coordinates {(-1,-3) (5,-3) (5,3) (-1,3)};
\filldraw(-4,0) circle(2.5pt);
\draw(-3.9,0)node[left]{$s_1$};
\filldraw(0,-2)circle(2.5pt);
\draw(0,-2)node[below]{$u$};
\filldraw(0,2)circle(2.5pt);
\draw(-0.1,2)node[right]{$s$};
\draw[-latex][thick] (-4,0)--(-0.1,2);
\draw[-latex][thick](-4,0)--(-4,-1.9);
\draw[-latex][thick] (-4,-2)--(-0.1,-2);
\draw[-latex][thick](0,-2)--(0,1.9);
\draw(-5,-3)node[left]{$v_1\mathcal{H}[A_1]$};
\draw(5,-3)node[right]{$v_2\mathcal{H}[A_2]$};
\draw plot [ smooth cycle] coordinates {(-4.5,-1.5) (0.5,0.5) (0,2.7)(-4.5,2.5)};
\draw plot [ smooth cycle] coordinates {(4.5,-1) (0,0.5) (0,2.5)(4.5,2.5)};
\draw(-3,2.3)node{$\mathcal{B}_1$};
\draw(3.3,2.3)node{$\mathcal{B}_2$};
\draw(-0.1,-0.9)node[right]{$m$};
\draw(-4.1,-0.9)node[right]{$m_1$};
\draw(-2,-1.9)node[below]{$k$};
\draw(-2.2,1.3)node{$p$};
\end{tikzpicture}
\caption{Configuration as in the proof of Lemma~\ref{lem:forward induction} (generic)}\label{fig:forward1}
\end{figure}
We note that $D\subseteq A_1\cap A_2$ and so \[u\mH[D]\subseteq
  u\mH[A_1\cap A_2]= v_1\mH[A_1]\cap v_2\mH[A_1].\]  Since $(D,u)$ is
some support of $\mB_1$, we have $D_1\subseteq D$ and
$u_1\mH[D_1]\subseteq u\mH[D]$. Hence there is a $D$-path
$u_1\longrightarrow u$ labelled $k$, say, which runs inside $\mA_1$,
and a $D$-path $u\longrightarrow s$ labelled $m$. Altogether, there is
a $D$-path $s_1\longrightarrow s$ labelled $m_1km$ (this path runs
entirely in $v_1\mH[A_1]$). 
Since $s_1,s\in \mB_1$, there is also a $B_1$-path $s_1\longrightarrow
s$ where $B_1=B\cap A_1$, labelled $p$, say. Again, this path runs
inside $v_1\mH[A_1]$. Since $H[A_1]$ is retractable, we have
$[p]_{H[A_1]}=[p']_{H[A_1]}$ where $p'$ is the word obtained from $p$
by deletion of all letters not in $D$. Hence there is a $D$-path
$s_1\longrightarrow s$ which runs entirely in $\mB_1\cap u\mH[D]$ and,
in particular, $s_1\in u\mH[D]\subseteq u\mH[A_1\cap A_2]=
v_1\mH[A_1]\cap v_2\mH[A_2]$ so that $s_1\in \mB_1\cap\mB_2$. Since
$(D_1,u_1)$ supports $s_1$ and therefore also $\mB_1\cap\mB_2$, 
it follows that $D\subseteq D_1$ and therefore $D=D_1$ as the converse inclusion has been already shown. In particular, $(D,u)$ provides unique minimal support of $\mB_1$ which is attained at $s_1\in \mB_1\cap \mB_2$.
So the configuration in Figure~\ref{fig:forward1} really looks as depicted in Figure~\ref{fig:forward2}.
\begin{figure}[ht]
\begin{tikzpicture}[scale=0.5]
\draw (-7,-2)--(7,-2);
\draw (7.2,-2.1) node [above left]{$\mathcal{L}$};
\filldraw (-3,-2) circle (2.5pt);
\draw (-3,-2) node[below]{$u_1$};
\draw plot [smooth cycle] coordinates {(-5,-3) (1,-3) (1,3) (-5,3)};
\draw plot [smooth cycle] coordinates {(-3,-3) (5,-3) (5,3) (-3,3)};
\filldraw(-2,1) circle(2.5pt);
\draw(-2,1)node[above]{$s_1$};
\filldraw(0,-2)circle(2.5pt);
\draw(0,-2)node[below]{$u$};
\filldraw(0,2)circle(2.5pt);
\draw(-0.1,2)node[right]{$s$};
\draw[-latex][thick] (-2,1)--(-0.1,2);
\draw[-latex][thick](-2,1)--(-3,-1.9);
\draw[-latex][thick] (-3,-2)--(-0.1,-2);
\draw[-latex][thick](0,-2)--(0,1.9);
\draw(-5,-3)node[left]{$v_1\mathcal{H}[A_1]$};
\draw(5,-3)node[right]{$v_2\mathcal{H}[A_2]$};
\draw plot [ smooth cycle] coordinates {(-4.5,-1.5) (0.5,0.5) (0,2.7)(-4.5,2.5)};
\draw plot [ smooth cycle] coordinates {(4.5,-1) (-2,0.5) (-2,2.5)(4.5,2.5)};
\draw(-3,2.3)node{$\mathcal{B}_1$};
\draw(3.3,2.3)node{$\mathcal{B}_2$};
\draw(-0.1,-0.9)node[right]{$m$};
\draw(-3.9,-0.4)node[right]{$m_1$};
\draw(-1.4,-1.9)node[below]{$k$};
\draw(-0.9,1.1)node{$p$};
\end{tikzpicture}
\caption{Configuration as in the proof of Lemma~\ref{lem:forward induction} (updated)}\label{fig:forward2}
\end{figure}
By the same reasoning we obtain that $s_2\in \mB_1\cap \mB_2$ and $D_2=D$. Altogether, $s_1,s_2\in \mB_1\cap \mB_2$ and $(D,u)$ provides unique minimal support of $\mB_1$ as well as $\mB_2$, attained at $s_1$ as well as $s_2$.
Now we continue by induction. Let $2\le k<n$ and suppose,
subject to some renumbering of the cosets $\mB_i$, we have already shown that $s_1,\dots,s_k\in \mB_1\cap\cdots\cap\mB_k$ and  all these $\mB_i$ have unique minimal support $(D,u)$ attained at all these $s_i$. Again there are $j\in \{1,\dots,k\}$ and $i\in\{k+1,\dots,n\}$ such that $\mB_j\cap\mB_i\ne\varnothing$ and after some renumbering we may assume that $j=k$ and $i=k+1$. Then, as for the case $k=1$, $s_k,s_{k+1}\in\mB_k\cap\mB_{k+1}$ and the unique minimal support of $\mB_k\cap \mB_{k+1}$ is $(D,u)$. Again, $s_k\in \mB_1\cap\cdots\cap\mB_k\cap\mB_{k+1}$ and so $\mB_j\cap \mB_{k+1}\ne\varnothing$ for all $j\le k$, therefore $s_j,s_{k+1}\in \mB_j\cap\mB_{k+1}$ and hence $s_1,\dots,s_{k+1}\in \mB_1\cap\cdots\cap\mB_{k+1}$ and $(D,u)$ provides unique minimal support for $\mB_{k+1}$ attained at $s_{k+1}$.
So $s_1,\dots,s_n\in \mB_1\cap\cdots\cap\mB_n$ and $\bigcup_{i=1}^n \mB_i$ has unique minimal support $(D,u)$  attained at some vertices of $\bigcap_{i=1}^n\mB_i$. 

It remains to argue that $\mB$ is indeed a $B$-cluster.
From $\bigcap_{i=1}^n\mB_i\ne \varnothing$ we have in particular that $\bigcap_{i=1}^n v_i\mH[A_i]\ne \varnothing$. 
By induction and using \eqref{eq:freeness extended} and Lemma~\ref{lem:K2acyclic} we can show that $\bigcap_{i=1}^n v_i\mH[A_i]=w\mH[C]$ for some vertex $w\in \mL$ and $C=\bigcap_{i=1}^n A_i$.
From the definition of $\sfCE(H,\mL;\bP_A)$ it follows that the graph $\bigcup_{i=1}^n v_i\mH[A_i]=\bigcup_{i=1}^n w\mH[A_i]$ is isomorphic with the $A$-cluster $\sfCL(H[A],\{A_1,\dots,A_n\})$. Corollary~\ref{cor:Bcomponentscluster} now implies that  $\mB=\bigcup_{i=1}^n \mB_i$ is isomorphic with the $B$-cluster $\sfCL(H[B],\{B\cap A_1,\dots,B\cap A_n\})$.
\end{proof}
The case $|A|=2$, which is not handled in Lemma~\ref{lem:forward induction},
is actually trivial.
\begin{Prop}\label{prop:|A|=2} Let $H$ be an $E$-group, $A\subseteq E$
  with $|A|=2$ and $H[A]$ be retractable. Then every connected
  $A$-subgraph $\mL$ of $\mH[A]$
  is admissible for
  $\pss{A}$-coset extension
  and the full 
  $\pss{A}$-coset extension
  $\sfCE(H,\mL;\bP_A)$ has the cluster property.
\end{Prop}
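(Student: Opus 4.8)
The plan is to exploit how few subsets a two-element set has. Write $A=\{a,b\}$; its only proper subsets are $\varnothing,\{a\},\{b\}$, so $\bP_A=\{\varnothing,\{a\},\{b\}\}$, and every nested chain $B_1,B_2\subsetneq B\subsetneq A$ is forced to satisfy $|B|=1$ and $B_1=B_2=\varnothing$. This collapse of the subset lattice is what makes the statement trivial, and the whole proof amounts to observing that all the configurations governed by admissibility and the cluster property degenerate into singletons and full cosets.

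First I would dispose of admissibility (Definition~\ref{def:admissible}). Since the only available nested sets are $B_1=B_2=\varnothing\subsetneq B\subsetneq A$, the relevant components $\mB_1=v_1\mL[\varnothing]$ and $\mB_2=v_2\mL[\varnothing]$ are the singletons $\{v_1\},\{v_2\}$, and likewise $v_i\mG[\varnothing]=\{v_i\}$. Thus $\mB_1\cap\mB_2=\varnothing$ means exactly $v_1\ne v_2$, which at once yields $v_1\mG[\varnothing]\cap v_2\mG[\varnothing]=\varnothing$. Hence the implication~\eqref{eq:freeness} holds vacuously, and $\mL$ is admissible for $\pss{A}$-coset extension.

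Next I would pin down the concrete shape of $\sfCE(H,\mL;\bP_A)$. Because $\sfCE(H,\mL;\varnothing)=\mL$, the full coset extension is obtained from the skeleton $\mL$ by extending every $\{a\}$-component to a full $a$-coset $v\mG[\{a\}]$ and every $\{b\}$-component to a full $b$-coset, with all identifications taking place along $\mL$ (as $\{a\}\cap\{b\}=\varnothing$). Two structural facts then drive the argument: every full $a$-coset meets $\mL$, since it arises from extending an $\{a\}$-component of $\mL$ (symmetrically for $b$); and the only genuinely new vertices are those completing an $a$- or a $b$-cycle, a new vertex on an $a$-cycle carrying no $b$-edge and vice versa. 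In particular the only $a$-isolated vertices are the new vertices lying on $b$-cosets.

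Finally I would verify the cluster property (Definition~\ref{def:cluster property}) by running through $B\in\bP_A$. For $B=\varnothing$ a $B$-component is a single vertex, hence a (degenerate) full $\varnothing$-coset, and by~\eqref{eq:boolean extended} every one-vertex subgraph has unique minimal support, so (1) and (2) hold. For $B=\{a\}$ the $\{a\}$-components are either full $a$-cosets, which all meet $\mL$ and therefore impose no condition, or single $a$-isolated vertices $w$; such a $w$ is the degenerate $\{a\}$-cluster $\sfCL(H[\{a\}],\{\varnothing\})$, lies on exactly one constituent coset $v\mG[\{b\}]$ and on no constituent coset of smaller size (no $\varnothing$-coset, as $w\notin\mL$), so $(\{b\},v)$ is its unique minimal support, attained at $w$ itself, which is its own core. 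The case $B=\{b\}$ is symmetric. This settles items (1) and (2) in all cases and establishes the cluster property. The proof has no hard step; the only point requiring a little care is the bookkeeping of the new isolated vertices and the verification that each sits on a single constituent coset, which is exactly what fixes its minimal support.
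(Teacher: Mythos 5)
Your proof is correct and takes essentially the same route as the paper's own (very terse) proof: both exploit the collapse of the subset lattice of a two-element set, note that every constituent coset of $\sfCE(H,\mL;\bP_A)$ is a single-letter coset meeting the skeleton, and conclude that the only $B$-components disjoint from $\mL$ are singleton vertices, which have unique minimal support. Your write-up merely makes explicit the admissibility check and the case analysis over $B\in\{\varnothing,\{a\},\{b\}\}$ that the paper compresses into ``trivial reasons''.
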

\begin{proof} Definition~\ref{def:cluster property} is fulfilled for trivial reasons: only the empty set $C=\varnothing$ satisfies $C\subsetneq B\subsetneq A$. Every constituent coset of $\sfCE(H,\mL;\bP_A)$ is of the form $v\mH[a]$ for some letter $a\in A$. Hence, for $B\subsetneq A$, the only $B$-components of $\sfCE(H,\mL;\bP_A)$ which have empty intersection with $\mL$ are singleton vertices which clearly have unique minimal support. 
\end{proof}

Combination of this with Lemma~\ref{lem:forward induction} implies the following result; it encapsulates 
the first of the two inductive procedures discussed above.

\begin{Thm}[forward induction]\label{thm:forward induction} Let $H$ be an $E$-group,
  $A\subseteq E$, $|A|\ge 3$ and suppose that $H[A]$ is
  retractable. Let $\mL\subseteq \mH[A]$ be a connected $A$-graph
  which is admissible for
  $\pss{A}$-coset extension.
  Assume that for all $B\subsetneq A$  and every vertex
  $v\in \mL$ the full
  $\pss{B}$-coset extension
  $\sfCE(H,v\mL[B];\bP_B)$ embeds into $v\mH[B]$ and  is
  bridge-free; then the full
  $\pss{A}$-coset extension
  $\sfCE(H,\mL;\bP_A)$ has the cluster property.
\end{Thm}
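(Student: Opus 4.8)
The plan is to derive this theorem from Lemma~\ref{lem:forward induction} by a straightforward induction on $n=|A|\ge 3$, anchored by Proposition~\ref{prop:|A|=2}. The only discrepancy between the hypotheses here and those of Lemma~\ref{lem:forward induction} is that the latter additionally demands, for every $B\subsetneq A$ and every $v\in\mL$, that the coset extension $\sfCE(H,v\mL[B];\bP_B)$ have the cluster property; bridge-freeness is assumed in both, and by Definition~\ref{def:bridge free}(1) bridge-freeness already subsumes the embedding into $v\mH[B]$. So it suffices to supply the missing cluster property for all proper subsets $B$, and this I would obtain inductively. Throughout I would use that all the constructions involved (admissibility, coset extensions, bridge-freeness, the cluster property) are invariant under the left action of $H$ on its Cayley graph, so that a $B$-component $v\mL[B]$ may be translated by $v^{-1}$ to sit in $\mH[B]$ with base vertex $1$.

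For the inductive step, fix $B\subsetneq A$ and $v\in\mL$ and verify that $\sfCE(H,v\mL[B];\bP_B)$ has the cluster property. If $|B|\le 1$ this is immediate: the only components to inspect are single vertices, which always carry unique minimal support. If $|B|=2$, Proposition~\ref{prop:|A|=2} applies directly to the connected $B$-graph $v\mL[B]$ and yields both admissibility and the cluster property. The substantial case is $3\le|B|<n$, where I would apply the theorem itself to the smaller set $B$ and the $B$-graph $\mL':=v\mL[B]$. By Lemma~\ref{lem:CEBsubsetCEA} the graph $\mL'$ is a connected $B$-subgraph of $\mH[B]$ that is admissible for $\pss{B}$-coset extension, so the only thing left to check before invoking the induction hypothesis is the bridge-freeness hypothesis of the theorem for all $C\subsetneq B$.

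Here lies the one point requiring care, which I expect to be the main (if modest) obstacle: matching components across the nesting. For $C\subseteq B$ and $w\in\mL'$, the $C$-component of $w$ taken inside $\mL'=v\mL[B]$ coincides with its $C$-component $w\mL[C]$ taken inside $\mL$, since every $\til C$-labelled path issuing from $w$ stays within the $B$-component $v\mL[B]$; that is, $w\,(v\mL[B])[C]=w\mL[C]$. Consequently $\sfCE(H,w\,\mL'[C];\bP_C)=\sfCE(H,w\mL[C];\bP_C)$, and since $C\subsetneq B\subsetneq A$ gives $C\subsetneq A$, the original hypothesis of the theorem guarantees that this coset extension embeds into $w\mH[C]$ and is bridge-free. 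Thus every hypothesis of the theorem is met by $\mL'$ and $B$, and as $|B|<n$ the induction hypothesis gives that $\sfCE(H,v\mL[B];\bP_B)$ has the cluster property.

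With the cluster property established for every $B\subsetneq A$ and every $v\in\mL$, and bridge-freeness already assumed, both hypotheses~(1) and~(2) of Lemma~\ref{lem:forward induction} are in force; applying that lemma concludes that $\sfCE(H,\mL;\bP_A)$ has the cluster property, completing the inductive step. The base case $n=3$ is just the instance in which every proper subset $B$ has $|B|\le 2$, so only the trivial cases and Proposition~\ref{prop:|A|=2} are needed before the final appeal to Lemma~\ref{lem:forward induction}. In short, all the genuine geometric work is carried out in Lemma~\ref{lem:forward induction}, and the theorem itself reduces to a clean bookkeeping induction whose only subtlety is the component-matching identity $w\,(v\mL[B])[C]=w\mL[C]$ that lets the bridge-freeness hypothesis descend from $A$ to its proper subsets.
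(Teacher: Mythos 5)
Your proposal is correct and takes essentially the same route as the paper: the paper likewise reduces the theorem to Lemma~\ref{lem:forward induction} by an induction on $|A|$ that supplies the missing cluster property of $\sfCE(H,v\mL[B];\bP_B)$ for all $B\subsetneq A$ (applying the theorem itself to $v\mL[B]$ in the r\^ole of $\mL$), with Proposition~\ref{prop:|A|=2} anchoring the base case $|A|=3$. The only difference is one of detail: you make explicit the verification that $v\mL[B]$ inherits the theorem's hypotheses --- via Lemma~\ref{lem:CEBsubsetCEA} and the component-matching identity $w\,(v\mL[B])[C]=w\mL[C]$ --- which the paper's very terse proof leaves implicit.
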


\begin{proof} 
In order to reduce the claim of the theorem to
Lemma~\ref{lem:forward induction}, 
we merely need to argue that the 
graphs $\sfCE(H,v\mL[B];\bP_B)$ for $B
\subsetneq A$ have the cluster property. This is proved by induction on $|A|$.
For $|A|=3$ we only need to consider $|B|=2$, so that 
$\sfCE(H,v\mL[B];\bP_B)$ has the cluster property by Proposition~\ref{prop:|A|=2}.
For $|A|>3$ we can use the inductive claim for all 
$|B|<|A|$ (in the r\^ole of $A$) to find that
$\sfCE(H,v\mL[B];\bP_B)$ has the cluster property.
\end{proof}

For the following recall
Definition~\ref{def:covering relation} 
 of when a Cayley graph $\mG$  covers a graph $\mC$
(in terms of canonical morphisms),
 Definition~\ref{def:retractable group} of a $k$-retractable group and Definition~\ref{def:stable}  of a $k$-stable expansion.

 \begin{Def}\label{def:cover}\rm
Suppose that a Cayley graph $\mG$ covers a complete connected graph $\mC$ via a canonical morphism $\varphi\colon \mG\twoheadrightarrow \mC$ and let $\mL\subseteq \mC$ be a connected subgraph.  A
\emph{cover of $\mL$ in $\mG$} (a \emph{$\mG$-cover} for short) is any
connected component of the graph $\varphi\inv(\mL)\subseteq \mG$. 
\end{Def}
Recall
that a crucial feature of covers is the \emph{path lifting property}:
if $\mL$
admits a path $u\longrightarrow v$ labelled $p\in \til{E}^*$ and $u'$
is any vertex of $\varphi^{-1}(\mL)$ such that $\varphi(u')=u$, then
$\varphi^{-1}(\mL)$
admits a path  labelled $p$ with initial vertex $u'$ 
that maps onto the original path in $\mL$ under $\varphi$.

\begin{Thm}[upward induction]\label{thm:upward induction} Let $1\le k<|E|$ and let $H$ be an $E$-group which is $(k+1)$-retractable. Let $A\subseteq E$ with $|A|=k+1$ and let $\mL_H$ be a connected $A$-subgraph of $\mH[A]$ such that
\begin{enumerate}
\item $\mL_H$ is admissible for
$\pss{A}$-coset extension (with respect to $H$),
\item the full
$\pss{A}$-coset extension
$\sfCE(H,\mL_H;\bP_A)$ has the cluster property.
\end{enumerate}
Let $G\twoheadrightarrow H$ be a $k$-stable expansion of $E$-groups
such that the Cayley graph $\mG$ of $G$ covers all graphs of the form
$\overline{\sfCE(H,\mL_H;\bP_A)\circv \mH[B]}$ for $B\subsetneq A$ and
$v$ a vertex of $\sfCE(H,\mL_H;\bP_A)$ 
(thus, in particular, 
$\mG$ covers the graph $\overline{\sfCE(H,\mL_H;\bP_A)}$ itself).
Let $\mL_G$ be any cover of $\mL_H$ in $\mG$.
Then the following hold:
\begin{enumerate}
\item[(i)]  $\mL_G$ is admissible for
$\pss{A}$-coset extension (with respect to $G$),
\item[(ii)] the full
$\pss{A}$-coset extension
 $\sfCE(G, \mL_G;\bP_A)$ embeds into $\mG[A]$,
\item[(iii)]  the {embedded} full
$\pss{A}$-coset extension
  $\sfCE(G,\mL_G;\bP_A)$ is bridge-free in $\mG[A]$.
\end{enumerate}
\end{Thm}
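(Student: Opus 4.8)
The plan is to transport the structure of $\sfCE(H,\mL_H;\bP_A)$ up to $G$ through the canonical covering morphisms supplied by the hypothesis, exploiting two numerical coincidences: since $|A|=k+1$, $k$-stability of $G\twoheadrightarrow H$ gives $G[B]\cong H[B]$ for every $B\subsetneq A$ (so every proper coset lifts isomorphically), while $(k+1)$-retractability of $G$ makes $G[A]$ retractable, so that $\mG[A]$ is $2$- and $3$-acyclic (Remark~\ref{rem:2and3acyc}). Write $\varphi\colon\mG\twoheadrightarrow\ol{\sfCE(H,\mL_H;\bP_A)}$ for the canonical covering and let $\mathcal N$ be the connected component of $\varphi^{-1}(\sfCE(H,\mL_H;\bP_A))$ containing $\mL_G$. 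First I would identify $\mathcal N$ with the image of $\iota_{\bP_A}\colon\sfCE(G,\mL_G;\bP_A)\to\mG[A]$: each constituent coset $v\mG[B]$ attached to a skeleton $B$-component $\mB$ of $\mL_G$ lies over the full coset $\varphi(v)\mH[B]\subseteq\sfCE(H,\mL_H;\bP_A)$ and is connected to $\mL_G$, hence sits in $\mathcal N$, while conversely $\mathcal N$ is covered by such cosets. Thus $\mathrm{im}(\iota_{\bP_A})=\mathcal N$, and the content of (ii)$=$(iii)(1) is exactly that this covering does not over-identify, i.e.\ that $\iota_{\bP_A}$ is injective.

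For (i) I would argue by transfer. Fix $B_1,B_2\subsetneq B\subsetneq A$ and a $B$-component $\mB=v\mL_G[B]$ with $B_i$-components $\mB_i=v_i\mL_G[B_i]$; since $v$ is a skeleton vertex, $\varphi(v)\mL_H[B]$ is a \emph{full} $B$-coset of $\sfCE(H,\mL_H;\bP_A)$, so path lifting makes $\varphi$ restrict to an isomorphism $v\mG[B]\xrightarrow{\ \sim\ }\varphi(v)\mH[B]$ (equal finite cardinalities, as $|B|\le k$). This carries the configuration $(\mB,\mB_1,\mB_2)$ onto its counterpart in $\mL_H$, so disjointness of $\mB_1,\mB_2$ becomes disjointness of $\varphi(\mB_1),\varphi(\mB_2)$; admissibility of $\mL_H$ (Definition~\ref{def:admissible}) then gives $\varphi(v_1)\mH[B_1]\cap\varphi(v_2)\mH[B_2]=\varnothing$, which pulls back through the same isomorphism to $v_1\mG[B_1]\cap v_2\mG[B_2]=\varnothing$. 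The very same ``full coset'' mechanism proves (ii): a failure of injectivity of $\iota_{\bP_A}$ would place two distinct $B$-components of $\mL_G$ in a single $B$-coset $C$ of $\mG$; projecting, both land in the \emph{one} $B$-component of $\varphi(C)$ in $\sfCE(H,\mL_H;\bP_A)$, which meets the skeleton and is therefore a full coset, so $\varphi|_C$ is injective and the two components coincide.

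The heart of the theorem is the bridge-freeness clause (iii)(2), where I expect the real work. Suppose $u,v\in\sfCE(G,\mL_G;\bP_A)=\mathcal N$ are $B$-connected in $\mG[A]$; then they share a full $B$-coset $C=u\mG[B]=v\mG[B]$ of $\mG$, and since the trivial completion only adds loops, their images are $B$-connected already inside $\sfCE(H,\mL_H;\bP_A)$, so lie in one $B$-component $\mathcal D$. By the cluster property (Definition~\ref{def:cluster property}), $\mathcal D$ is either a full coset (in particular whenever it meets the skeleton) or a proper $B$-cluster with unique minimal support attained at a core vertex $w$. If $\mathcal D$ is a full coset, then $\varphi|_C\colon C\to\mathcal D$ is an isomorphism; since the $B$-components $\mB_u,\mB_v$ of $u,v$ in $\mathcal N$ both lie in $C$ and both map onto $\mathcal D$, injectivity forces $\mB_u=\mB_v$, i.e.\ $u,v$ are $B$-connected in $\sfCE(G,\mL_G;\bP_A)$.

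The genuinely hard case is $\mathcal D$ a \emph{proper} cluster: then $\mathrm{im}(\varphi|_C)=\mathcal D$ is strictly smaller than $C$, so $\varphi|_C$ is not injective and the covering over $\ol{\sfCE(H,\mL_H;\bP_A)}$ cannot separate the lifts. This is precisely the obstruction the augmentation hypothesis removes. I would invoke the covering $\psi\colon\mG\twoheadrightarrow\ol{\sfCE(H,\mL_H;\bP_A)\circw\mH[B]}$ provided for the support vertex $w$ of $\mathcal D$; in this augmented target the $B$-component of $w$ is completed to the full coset $w\mH[B]$, so $\psi|_C\colon C\to w\mH[B]$ becomes an isomorphism of equal-size cosets. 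The key compatibility I must verify is that $\psi$ and $\varphi$ induce the same covering on $\mathcal N$ (both are canonical morphisms determined by the common $\le k$-dimensional coset data together with the inclusion $\sfCE(H,\mL_H;\bP_A)\hookrightarrow\sfCE(H,\mL_H;\bP_A)\circw\mH[B]$, whose $C$-components are controlled by Proposition~\ref{prop:2acyclicexpandedAext}); granting this, $\mB_u$ and $\mB_v$ both map under $\psi$ onto the image of $\mathcal D$ in $w\mH[B]$, and injectivity of $\psi|_C$ again forces $\mB_u=\mB_v$. Establishing this agreement of the two canonical morphisms along $\mathcal N$, and checking that $\mL_G$ lifts coherently under $\psi$, is the main obstacle; everything else reduces to the full-coset bookkeeping above together with the $2$/$3$-acyclicity of $\mG[A]$.
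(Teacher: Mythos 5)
Your overall route is the same as the paper's: (i) by transferring admissibility through the $k$-stable cover, (ii) via the cover of $\ol{\sfCE(H,\mL_H;\bP_A)}$, and (iii) by projecting a $B$-path downstairs, splitting on the cluster property, and invoking the covers of the augmented extensions in the cluster case; your (i) is essentially the paper's argument and is fine. The first genuine gap is in (ii): your claim that any failure of injectivity of $\iota_{\bP_A}$ ``would place two distinct $B$-components of $\mL_G$ in a single $B$-coset of $\mG$'' is false as a case analysis. Since $\sfCE(G,\mL_G;\bP_A)$ is the quotient of $\bigsqcup_{B}\sfCE(G,\mL_G;B)$ by $\Psi$, injectivity can also fail \emph{across types}: $x\in\sfCE(G,\mL_G;B)$ and $y\in\sfCE(G,\mL_G;C)$ with $B\ne C$, not $\Psi$-equivalent, yet $\iota_B(x)=\iota_C(y)=z$ in $\mG[A]$. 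One must then show that $z$ lies in a coset $c'\mG[B\cap C]$ based at a vertex $c'\in\mL_G$, so that $x$ and $y$ are already identified inside $\sfCE(G,\mL_G;B\cap C)$. This is where the paper does most of its work for (ii): it uses \eqref{eq:boolean extended} downstairs to write the intersection of the two image cosets as a constituent $(B\cap C)$-coset $c\mH[B\cap C]$ based at a \emph{skeleton} vertex $c\in\mL_H$, and then lifts explicit paths through $c$ via the $k$-stable isomorphisms of constituent cosets to produce the required vertex of $\mL_G$. Your ``full coset mechanism'' says nothing about this case, and it is not subsumed by the same-type case.

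The second gap is the concluding step of your cluster case in (iii): the assertion that $\mB_u$ and $\mB_v$ map under $\psi$ \emph{onto} (the image of) $\mathcal{D}$ is unjustified and essentially circular, since surjectivity of $\psi\restriction\mB_u$ onto $\mathcal{D}$ says exactly that every $B$-path of $\mathcal{D}$ from $\psi(u)$ lifts into $\sfCE(G,\mL_G;\bP_A)$ --- a form of the bridge-freeness being proved. What is actually available is weaker but sufficient: writing $\mathcal{D}=\mB_1\cup\cdots\cup\mB_n$ with $\psi(u)\in\mB_1$ and $\psi(v)\in\mB_2$, the constituent cosets of $\sfCE(H,\mL_H;\bP_A)$ containing $\mB_1$ and $\mB_2$ lift through the $k$-stable cover to constituent cosets of $\sfCE(G,\mL_G;\bP_A)$ containing $u$ and $v$, so $\psi(\mB_u)\supseteq\mB_1$ and $\psi(\mB_v)\supseteq\mB_2$; since $\mB_1\cap\mB_2$ contains the \emph{core} of the cluster and $\psi\restriction u\mG[B]$ is injective, $\mB_u\cap\mB_v\ne\varnothing$ and hence $\mB_u=\mB_v$. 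The paper reaches the same end by building an explicit $B$-path $p_1p_2$ through a core vertex $s$ and using $[p]_H=[p_1p_2]_H$ in the augmented cover plus $k$-stability to get $[p]_G=[p_1p_2]_G$; either way, the shared core is the indispensable ingredient missing from your argument, and the same unsupported ``onto'' claim also infects your full-coset case (there the repair is to lift a skeleton vertex of $\mL_H\cap\mathcal{D}$ into $\mL_G$ through a constituent coset, exactly the mechanism of the cross-type case you skipped in (ii)). By contrast, the item you flag as the main obstacle --- agreement of $\psi$ with $\varphi$ along $\mL_G$ and the constituent cosets --- is the routine part: choose the base point of $\psi$ so that it agrees with $\varphi$ at one vertex of $\mL_G$; since $\mL_H$ and the constituent cosets sit inside both target graphs and labelled paths in $E$-graphs are deterministic, the agreement propagates automatically.
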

\begin{proof}
  As for~(i), that $\mL_G$ is admissible for
$\pss{A}$-coset extension
follows from the fact that $\mL_H$ is admissible for
$\pss{A}$-coset extension
and that the canonical morphism $G\twoheadrightarrow H$ is
$k$-stable. In this case,  the canonical morphism $\varphi\colon
\mG\twoheadrightarrow \mH$ is injective on $B$-components for
$B\subsetneq A$ so that condition \eqref{eq:freeness} is satisfied
for $\mL_G$ if it is satisfied for $\mL_H=\varphi(\mL_G)$.

Towards injectivity as required for~(ii),
let $\psi\colon \sfCE(G,\mL_G;\bP_A)\to \mG[A]$ be the canonical graph
morphism of Proposition~\ref{prop:morphismCEtoGA}. We first show that
for every $B\subsetneq A$ the restriction of $\psi$
to $\sfCE(G,\mL_G;B)$ is injective. Suppose this were not the case. Since the restriction to $\mL_G$ is an embedding,
that could only happen if two vertices of two distinct constituent
cosets $u\mG[B]$ and $v\mG[B]$ of $\sfCE(G,\mL_G;B)$ were mapped
to the same vertex of $\mG[A]$ and therefore the cosets $u\mG[B]$ and
$v\mG[B]$ coincide as cosets of $\mG[A]$ (see the discussion leading to Remark~\ref{rmk:unfolding}). 
The result in $\mG[A]$ is depicted in Figure~\ref{fig:upward0} (left-hand side).
\begin{figure}[ht]
\begin{tikzpicture}[xscale=0.3,yscale=0.5]
\draw (-7,-2)--(7,-2);
\draw (7.2,-1.9) node [below left]{$\mathcal{L}_G$};
\draw[very thick](-5,-2)--(-2,-2);
\draw[very thick] (2,-2)--(5,-2);
\draw plot [smooth cycle] coordinates {(-5,-2) (-2,-2) (0,0) (2,-2)  (5,-2) (5,2) (-5,2)};
\draw(-4,-2)node[above]{$u$};
\draw(4,-2)node[above]{$v$};
\filldraw (-4,-2) circle (2.5pt);
\filldraw(4,-2)circle(2.5pt);
\draw(0,1.2)node{$u\mathcal{G}[B]=v\mathcal{G}[B]$};
\end{tikzpicture}
\begin{tikzpicture}[scale=0.6]
\draw (-4,0) -- (4,0);
 \filldraw (-2,0) circle (2.5pt);
\filldraw (2.3,0) circle (2.5pt);
\draw[very thick] (-3,0)-- (-1,0);
\draw[very thick] (1,0)--(3,0);
\draw (-2,0) node[above] {$u$};
\draw (2.3,0) node[above] {$v$};
\filldraw (0,2.5) circle (2.5pt);
\draw(0,2.15) node {$z$};
\draw(2.8,0.08) node[below right]  {$\mathcal{L}_G$};
\draw(-1.4,0.9) node{$u\mathcal{G}[B]$};
\draw(1.2,.8) node {$v\mathcal{G}[C]$};
\draw plot [smooth cycle] coordinates {(-3,0) (-1,0)  (1,2.5) (-1,2.5)};
\draw plot [smooth cycle] coordinates {(1,0) (3,0) (1,3) (-2,3)};
\end{tikzpicture}
\caption{Forbidden patterns in the proof of Theorem~\ref{thm:upward induction} (ii)}\label{fig:upward0}
\end{figure}
By assumption, $\mG$ covers the graph
$\overline{\sfCE(H,\mL_H;\bP_A)}$; 
so there is a canonical graph morphism $\varphi\colon \mG\twoheadrightarrow
\overline{\sfCE(H,\mL_H;\bP_A)}$ mapping $\mL_G$ onto $\mL_H$.
Since the expansion
$G\twoheadrightarrow H$ is $k$-stable and $|B|\le k$, the morphism
$\varphi$ maps $u\mG[B]=v\mG[B]$ isomorphically onto
$\varphi(u)\mH[B]$ and likewise onto $\varphi(v)\mH[B]$. Hence
$\varphi(u)\mH[B]=\varphi(v)\mH[B]$ in $\sfCE(H,\mL_H;\bP_A)$, so that
$\varphi(u)$ and $\varphi(v)$ are in the same $B$-component of
$\mL_H$. It follows that $\varphi(u)$ and $\varphi(v)$ can be
connected by a $B$-path which runs in $\mL_H$.
Under $\varphi$ that path lifts to a path $u\longrightarrow v'$ which runs in $\mL_G\cap u\mG[B]$. In particular, $v'\in u\mG[B]=v\mG[B]$ and $\varphi(v')=\varphi(v)$. Since $\varphi$ is injective on $B$-cosets,  $v'=v$ and therefore $u$ and $v$ belong to the same $B$-component of $\mL_G$.
It follows that the constituent cosets $u\mG[B]$ and $v\mG[B]$ of ${\sfCE(G,\mL_G;B)}$ coincide.

So, for the injectivity claim of~(ii), it remains to consider the case when vertices of distinct
coset extensions $\sfCE(G,\mL_G;B)$ and $\sfCE(G,\mL_G;C)$
would violate injectivity. 
Let $B,C\subsetneq A$, $B\ne C$ and  $x\in \sfCE(G,\mL_G;B)$, $y\in \sfCE(G,\mL_G;C)$ be vertices such that $\psi(x)=\psi(y)$. We need to show that $x=y$ in $\sfCE(G,\mL_G;\bP_A)$ (that is, $x$ and $y$ both are in $\sfCE(G,\mL_G;B\cap C)$ and coincide). From $\psi(x)=\psi(y)$ we see that in $\mG[A]$  the  situation is as depicted in Figure~\ref{fig:upward0} (right-hand side)
with $\psi(x)=z=\psi(y)$. That is, $u$ and $z$ are connected by a
$B$-path while $v$ and $z$ are connected by a $C$-path, so that 
$z\in u\mG[B]\cap v\mG[C]$.
Let us consider some canonical graph
morphism $\mG\twoheadrightarrow \overline{\sfCE(H,\mL_H;\bP_A)}$
(according to the statement of the Theorem), which maps $\mL_G$ onto
$\mL_H$. Let $u',v',z'$ be the image vertices of $u,v,z$,
respectively, under this morphism. Then $u',v'\in \mL_H$ and $z'\in
u'\mH[B]\cap v'\mH[C]$. 
The latter intersection is a constituent $(B\cap C)$-coset of
$\sfCE(H,\mL_H;\bP_A)$, having non-empty intersection with the
skeleton $\mL_H$,  say $u'\mH[B]\cap v'\mH[C]=c\mH[B\cap C]$ for some
$c\in \mL_H$. Moreover, the intersections $\mL_H\cap u'\mH[B]$ and
$\mL_H\cap v'\mH[C]$ both are connected (namely $B$- respectively $C$-components of $\mL_H$). This situation is depicted in Figure~\ref{fig:upward2}.
\begin{figure}[ht]
\begin{tikzpicture}[scale=0.6]
\filldraw (-3,0) circle (2.5pt);
\filldraw (3,0) circle (2.5pt);
\filldraw (0,2) circle (2.5pt);
\filldraw (0,4) circle (2.5pt);
\draw[-latex][thick](-3,0)--(-0.1,1.9);
\draw[-latex][thick](3,0)--(0.1,1.9);
\draw [dashed, thick] (-4.5,-1) -- (-3,0);
\draw[dashed, thick] (3,0)--(4.5,-1);
\draw[-latex][thick](0,2)--(0,3.85);
\draw (-3,0.4)node  {$u'$};
\draw (3.1,0.4) node {$v'$};
\draw (0,4)node[above]{$z'$};
\draw(0,1.9)node[below ] {$c$};
\draw(-1.3,0.8)node[above left] {$p$};
\draw(1.3,0.8)node[above right]{$q$};
\draw(-0.1,3) node[right] {$r$};
\draw plot [smooth cycle] coordinates {(-4,0) (-1,-1) (1,4.5) (-2,4.5)};
\draw plot [smooth cycle] coordinates {(2,-1) (4,0)(2,4.5)(-3,5)};
\draw(4.3,-1.2)node[above right]{$\mathcal{L}_H$};
\draw(-2.3,2)node{$u'\mathcal{H}[B]$};
\draw(2.3,2)node{$v'\mathcal{H}[C]$};
\end{tikzpicture}
\caption{Configuration as in the proof of Theorem~\ref{thm:upward induction} (ii)}\label{fig:upward2}
\end{figure}
So there are paths $u'\overset{p}{\longrightarrow}c$ in $\mL_H\cap u'\mH[B]$, $v'\overset{q}{\longrightarrow}c$ in $\mL_H\cap v'\mH[C]$ and $c\overset{r}{\longrightarrow}z'$ in $u'\mH[B]\cap v'\mH[C]$. In particular, $pr$ labels a path $u'\longrightarrow z'$, $qr$  labels a path $v'\longrightarrow z'$. From $k$-stability of the expansion $G\twoheadrightarrow H$ it follows that the morphism $\mG\twoheadrightarrow \overline{\sfCE(H,\mL_H;\bP_A)}$ is injective on all cosets $x\mG[D]$ for all $D\subsetneq A$. In particular, this morphism is bijective between $u\mG[B]$ and $u'\mH[B]$ as well as between $v\mG[C]$ and $v'\mH[C]$. From this it follows that the paths in $u'\mH[B]\cup v'\mH[C]$ just mentioned lift to paths in $u\mG[B]\cup v\mG[C]$: hence there is a path $u\longrightarrow z$ labelled $pr$ and one $v\longrightarrow z$ labelled $qr$. It follows that, in $\sfCE(G,\mL_G;\bP_A)$,
\[u\cdot p=z\cdot r^{-1}=v\cdot q.\]
Since $p\colon u'\longrightarrow c$ runs in $\mL_H$ and so does $q\colon v'\longrightarrow c$, the path $p\colon u\longrightarrow z\cdot r^{-1}$ runs in $\mL_G$, and so does the path $q\colon v\longrightarrow z\cdot r^{-1}$. It follows that \[u\mG[B]=(z\cdot r^{-1})\mG[B]\mbox{ and }v\mG[C]=(z\cdot r^{-1})\mG[C],\] thus  $u\mG[B]\cap v\mG[C]=(z\cdot r^{-1})\mG[B\cap C]$ so that, in $\sfCE(G,\mL_G;\{B, C\})$: \[x=(z\cdot r^{-1})\cdot r=y,\] that is, $x$ and $y$ represent the same vertex in $\sfCE(G,\mL_G;B\cap C)$, as required. Altogether, $\sfCE(G,\mL_G;\bP_A)$ embeds in $\mG[A]$ via the morphism of Proposition~\ref{prop:morphismCEtoGA}.

It remains to argue for~(iii), that $\sfCE(G,\mL_G;\bP_A)$ is bridge-free.
So we look at a pair of 
vertices $v_1,v_2\in \mL_G$, subsets $A_1,A_2\subsetneq A$, and vertices
$s_1\in v_1\mG[A_1]$, $s_2\in v_2\mG[A_1]$, 
and assume that,
for some $B\subsetneq A$, there is a $B$-path $ s_1\overset{p}{\longrightarrow} s_2$ running in $\mG[A]$ (all the following takes place in $\mG[A]$ as depicted in Figure~\ref{fig:upward3}).
\begin{figure}[ht]
\begin{tikzpicture}[scale=0.5]
\draw plot [smooth cycle] coordinates {(-7,-3) (0,-4) (7,-3)(7,3) (0,4)(-7,3)};
\draw (-7,-2)--(7,-2);
\draw (7.2,-2.1) node [above left]{$\mathcal{L}_G$};
\filldraw (-4,-2) circle (2.5pt);
\filldraw(4,-2) circle (2.5pt);
\draw (-4,-2) node[below]{$v_1$};
\draw(4,-2) node[below] {$v_2$};
\draw plot [smooth cycle] coordinates {(-5,-3) (-3,-3) (-3,3) (-5,3)};
\draw plot [smooth cycle] coordinates {(3,-3) (5,-3) (5,3) (3,3)};
\filldraw(-4,2) circle(2.5pt);
\draw(-4,2)node[above]{$s_1$};
\filldraw(4,2) circle (2.5pt);
\draw(4,2)node[above]{$s_2$};
\draw[-latex][thick](-4,2)--(3.9,2);
\draw[-latex][thick](-4,-2)--(3.9,-2);
\draw[-latex][thick](-4,-2)--(-4,1.9);
\draw[-latex][thick](4,-2)--(4,1.9);
\draw(0,2)node[above] {$p$};
\draw(0,2)node[below] {$B$};
\draw(0,-2)node[below]{$q$};
\draw(-4.1,-0)node[right]{$f_1$};
\draw(4.1,0)node[left]{$f_2$};
\draw(7,3)node[right]{$\mathcal{G}[A]$};
\draw(-5.9,0.5)node{$v_1\mathcal{G}[A_1]$};
\draw(5.8,0.5)node{$v_2\mathcal{G}[A_2]$};
\end{tikzpicture}
\caption{First configuration as in the proof of Theorem~\ref{thm:upward induction} (iii)}\label{fig:upward3}
\end{figure}
In addition, there are an $A$-path $v_1\overset{q}{\longrightarrow}
v_2$ running in $\mL_G$ and $A_i$-paths 
$v_i\overset{f_i}{\longrightarrow} s_i$ running in
$v_i\mG[A_i]$. Consider the canonical graph morphism
$\varphi\colon\mG\twoheadrightarrow \overline{\sfCE(H, \mL_H;\bP_A)}$, which maps
$\mL_G$ onto $\mL_H$.  
Let $v_i'$ be the image of $v_i$ in $\mL_H$ under this morphism. The path $
v_1\overset{q}{\longrightarrow} v_2$ is mapped to the path
$v_1'\overset{q}{\longrightarrow} v_2'$
in $\mL_H$.
Let us denote the image of $s_i$ by $s_i'$; then the path
$v_i\overset{f_i}{\longrightarrow} s_i$ running in $v_i\mG[A_i]$ is
mapped to the path $v_i'\overset{f_i}{\longrightarrow} s_i'$ which
runs in $v_i'\mH[A_i]$. So far, all these paths run in $\sfCE(H,\mL_H;\bP_A)$. Further, the path $s_1\overset{p}{\longrightarrow} s_2$ is mapped to the path $s_1'\overset{p}{\longrightarrow} s_2'$, which runs in $\overline{\sfCE(H,\mL_H;\bP_A)}$. It follows that there is a $B$-path
$s_1'\overset{ p^\circ}{\longrightarrow}s_2'$ which runs in $\sfCE(H,\mL_H;\bP_A)$ (in fact, $p^\circ$ is the word obtained from $p$ by deletion of the letters which traverse loop edges of $\ol{\sfCE(H,\mL_H;\bP_A)}\setminus \sfCE(H,\mL_H;\bP_A)$).

So consider the $B$-component $\mB$ of $\sfCE(H,\mL_H;\bP_A)$ which contains the two vertices $s_1'$ and $s_2'$.
The cluster property of $\sfCE(H,\mL_H;\bP_A)$ shows the following:
either $\mB$ has non-empty intersection with the skeleton $\mL_H$, or
else $\mB$ is a $B$-cluster (the existence of unique minimal support
is not needed in this context).
Assume the latter case first: as a $B$-cluster, $\mB$ is the union
$\mB=\mB_1\cup\cdots\cup \mB_n$ of $(B\cap C_i)$-cosets where
$C_i\subsetneq A$, $|C_i|=|A|-1$ and assume first that $n\ge 2$;
the case $n=1$ will be handled below.
We may assume that $s_i'\in
\mB_i$ for $i=1,2$.
The pairs $(A_1,v_1')$ and $(A_2,v_2')$ provide support for $s_1'$ and
$s_2'$, respectively.
The cosets $\mB_1=s_1'\mH[C_1\cap B] \subseteq v_1'\mH[C_1]$
and $\mB_2=s_2'\mH[C_2\cap B] \subseteq v_2'\mH[C_2]$ have 
non-empty intersection (indeed,
 $\mB_1\cap \mB_2$ contains the core of $\mB$). Hence $v_1'\mH[C_1]\cap v_2'\mH[C_2]\ne \varnothing$ so that $v_1'\mH[C_1]\cap v_2'\mH[C_2]=v\mH[C]$ for $C=C_1\cap C_2$ and some vertex $v\in \mL_H$. The situation is depicted in Figure~\ref{fig:upward4}.
\begin{figure}[ht]
\begin{tikzpicture}[xscale=0.7,yscale=0.7]
\draw (-7,-2)--(7,-2);
\draw (7.2,-2.1) node [above left]{$\mathcal{L}_H$};
\filldraw (-4,-2) circle (2.5pt);
\filldraw(4,-2) circle (2.5pt);
\draw (-4,-2) node[above]{$v'_1$};
\draw(4,-2) node[above] {$v'_2$};
\draw plot [smooth cycle] coordinates {(-5,-3) (1,-3) (1,3) (-5,3)};
\draw plot [smooth cycle] coordinates {(-1,-3) (5,-3) (5,3) (-1,3)};
\filldraw(-4,2) circle(2.5pt);
\draw(-4,2)node[left]{$s'_1$};
\filldraw(4,2) circle (2.5pt);
\draw(4,2)node[right]{$s'_2$};
\filldraw(0,-2)circle(2.5pt);
\draw(0,-2)node[below]{$v$};
\filldraw(0,2)circle(2.5pt);
\draw(0,2.04)node[below]{$s$};
\draw[-latex][thick] (-4,2)--(-0.1,2);
\draw[-latex][thick](0,2)--(3.9,2);
\draw(-2.2,1.9)node[above]{$p_1$};
\draw(-2,2.85) node {$\mB_1$};
\draw(2.3,1.85)node[above]{$p_2$};
\draw(2,2.9) node{$\mB_2$};
\draw(-5,-3)node[left]{$v_1'\mathcal{H}[C_1]$};
\draw(5,-3)node[right]{$v_2'\mathcal{H}[C_2]$};
\draw(-2.1,2.1)node[below]{$B\cap C_1$};
\draw(2.2,2.1)node[below]{$B\cap C_2$};
\draw plot [ smooth cycle] coordinates {(-5,-2.5) (-3,-2.5) (-3,2.5)(-5,2.5)};
\draw(-4,0)node{$v_1'\mathcal{H}[A_1]$};
\draw plot [ smooth cycle] coordinates {(5,-2.5) (3,-2.5) (3,2.5)(5,2.5)};
\draw(4,0)node{$v_2'\mathcal{H}[A_2]$};
\draw plot [ smooth cycle] coordinates {(-4.5,1.5) (0,1.5) (0,2.5)(-4.5,2.5)};
\draw plot [ smooth cycle] coordinates {(4.5,1.5) (0,1.5) (0,2.5)(4.5,2.5)};
\end{tikzpicture}
\caption{Second configuration as in the proof of Theorem~\ref{thm:upward induction} (iii)}\label{fig:upward4}
\end{figure}
In particular, there is a vertex $s\in s_1'\mH[B\cap C_1]\cap s_2'\mH[B\cap C_2]$ and there are $B\cap C_i$-paths
\[s_1'\overset{p_1}{\longrightarrow}s\overset{p_2}{\longrightarrow}s_2'\]
labelled $p_i$ ($i=1,2$). We now consider the $B$-augmentation
of $\sfCE(H,\mL_H;\bP_A)$ at the vertex $s$ and the canonical graph morphism
\[\psi\colon\mG\twoheadrightarrow \ol{\sfCE(H,\mL_H;\bP_A)\circs
    \mH[B]}
\]
which maps the covering graph
$\mL_G$ onto $\mL_H$.
The graphs $\sfCE(H,\mL_H;\bP_A)$ and   
$\sfCE(H,\mL_H;\bP_A)\circs\mH[B]$ are almost the same except that
the cluster $\mB$ in the coset extension $\sfCE(H,\mL_H;\bP_A)$
is blown up to the full coset $s\mH[B]$ in the latter graph.
The morphism $\psi$ now
 maps the path $s_1\overset{p}{\longrightarrow} s_2$ to the path
 $s_1'\overset{p}{\longrightarrow} s_2'$ which runs in $s\mH[B]$; but
 $s_1'\overset{p_1}{\longrightarrow}s\overset{p_2}{\longrightarrow}s_2'$
 also run in $s\mH[B]$ which implies that $[p]_H=[p_1p_2]_H$. Since
 the expansion $G\twoheadrightarrow H$ is $k$-stable and $|B|\le k$,
 it follows that $[p]_G=[p_1p_2]_G$. In addition, $k$-stability implies that $\psi$ provides isomorphisms $v_1\mG[C_1]\twoheadrightarrow v_1'\mH[C_1]$ and $v_2\mG[C_2]\twoheadrightarrow v_2'\mH[C_2]$
and therefore also an isomorphism $v_1\mG[C_1]\cup
v_2\mG[C_2]\twoheadrightarrow v_1'\mH[C_1]\cup v_2'\mH[C_2]$ (see Lemma~\ref{lem:small cosets 1}). 
It follows that the path $s_1\overset{p_1}{\longrightarrow}s\cdot p_1$ runs in $v_1\mG[C_1]$ while $s_1\cdot p_1\overset{p_2}{\longrightarrow}s_1\cdot p_1p_2=s_2$ runs in $v_2\mG[C_2]$. 
So the path $s_1\overset{p_1p_2}{\longrightarrow} s_2$ runs entirely in $v_1\mG[C_1]\cup v_2\mG[C_2]\subseteq\sfCE(G,\mL_G;\bP_A)$ and thus provides a $B$-path between $s_1$ and $s_2$ in the coset extension $\sfCE(G,\mL_G;\bP_A)$.
 Finally, for the same reason, we see that in case $n=1$, that is, $\mB=\mB_1\subseteq v_1'\mH[C_1]$ the  path $s_1\mathrel{\overset{p_1p_2}{\longrightarrow}}s_2$ runs in $v_1\mG[C_1]$ which is contained in $\sfCE(G,\mL_G;\bP_A)$.

The remaining case, where  
$\mB$ has non-empty intersection with the skeleton $\mL_H$, is easy: 
in this case $\mB$ is a full $B$-coset $\mB=v\mH[B]$ for some vertex $v\in \mL_H$. The canonical morphism $\varphi\colon \mG\twoheadrightarrow\ol{\sfCE(H,\mL_H;\bP_A)}$ induces an isomorphism 
$\phi\colon s_1\mG[B]=s_2\mG[B]\twoheadrightarrow v\mH[B]$ where $\phi=\varphi\restriction s_1\mG[B]$ is the restriction. Then $s_1\mG[B]=s_2\mG[B]=\phi^{-1}(v\mH[B])=\phi^{-1}(v)\mG[B]$. But $\phi^{-1}(v)\in \mL_G$ so that $s_1\mG[B]=s_2\mG[B]$ is contained in $\sfCE(G,\mL_G;\bP_A)$.
\end{proof}

\section{Construction of the group $G$}\label{sec:groupoids}
The  group $G$  announced in
Lemma~\ref{thm:main theorem} will be constructed via a series of expansions
\begin{equation}\label{eq:series of H and G}
  G_1\twoheadleftarrow H_1\twoheadleftarrow G_2\twoheadleftarrow\cdots\twoheadleftarrow G_{|E|-1}\twoheadleftarrow H_{|E|-1}\twoheadleftarrow G_{|E|}=G
  \end{equation}
  where, for every $k$, the expansions $G_k\twoheadleftarrow H_k$ and
  $H_k \twoheadleftarrow G_{k+1}$  are $k$-stable and the groups $H_k$
  and $G_{k+1}$ are $(k+1)$-retractable.
Here the series~\eqref{eq:series of G} is interleaved with 
the intermediate stages~$H_k$ in~\eqref{eq:series of H and G}.  
The series \eqref{eq:series of H and G} is defined by an ascending series
\begin{equation} \label{eq:series of graphs}
\mX_1\subseteq \mY_1\subseteq \mX_2\subseteq \cdots\subseteq \mX_{|E|-1}\subseteq \mY_{|E|-1}\subseteq \mX_{|E|}
\end{equation}
of complete $E$-graphs such that
each group in the series \eqref{eq:series of H and G}  is the transition group of the
corresponding graph in \eqref{eq:series of graphs}, that is, 
\[G_k=\mathrsfs{T}({\mathcal X}_k)\mbox{ and }H_k=\cT(\mY_k)\]
for all $k$ in question.
Every graph in the series~\eqref{eq:series of graphs} is obtained from its predecessor by adding certain complete components.
These components are constructed by an inductive procedure, the idea of
which is as follows.
The graph ${\mathcal X}_1$ is obtained as a suitable completion of
the given oriented graph $\mE=(V,\til{E};\alpha,\omega,{}^{-1})$,
here considered as an $E$-labelled graph where every edge gets its
own label. This serves to initialise the series \eqref{eq:series of H and G}
with $G_1:=\mathrsfs{T}({\mathcal X}_1)$.

Suppose that for
$k\ge 1$ the graph $\mX_k$ and therefore its
transition group $G_k$ have already been constructed.
Then the step $\mX_k\leadsto \mY_k$, and hence the step $G_k\leadsto H_k$,
raises the ``degree of retractability" from $k$ to $k+1$ and thereby
lays the ground for the transition $H_k\leadsto G_{k+1}$.
{The latter} step is intended to
ensure
the following: suppose that $p$ is a word over $k+1$ letters
which forms a path $u\longrightarrow v$ in $\mE$ and $a\in \co(p)$ for
some $a\in E$; if $H_k$ satisfies the relation $p=p_{a\to 1}$, but
there is no word $q$ in the letters
$B:=\co(p)\setminus\{a\}$ (and their inverses)
forming a path $u\longrightarrow v$ in $\mE$ such that $H_k$ satisfies the relation 
$p=q$,
then some component of $\mX_{k+1}\setminus \mY_k$ guarantees that $G_{k+1}$ avoids the relation $p=p_{a\to 1}$ and therefore \textsl{every} relation $p=q$ with $q\in \til{B}^*$.

\subsection{Definition of $G_1$ and the transition $G_k\leadsto H_k$}
\label{subsec:GktoHk}
The idea {of the construction of} the graph ${\mathcal X}_1$ is to
extend the given oriented graph
$\mE=(V,\til{E},\alpha,\omega,{}^{-1})$ to a complete $E$-graph on
the vertex set $V$ in whose transition group the permutation $[e]$
corresponding to any non-loop edge $e$ is the transposition in $V$ 
that swaps the two vertices $\alpha e$ and $\omega e$. Let $\mE=(V,
\til{E};\alpha,\omega,{}\inv)$ be a finite connected oriented
graph. We let the set of positive edges $E$ be our alphabet and label
every edge $e$ by itself.
Thereby we get the $E$-labelled graph $(V,
\til{E};\alpha,\omega,{}^{-1},\ell, E)$
where  $\ell$ is the identity function mapping every $e\in \til{E}$,
considered as an edge, to itself, considered as a label.
The resulting graph is
an $E$-graph for
trivial reasons, since every label appears exactly once. 

Next,  for every non-loop edge $e$ we add a
new  edge $\bar{e}$ and set
\[\alpha \bar{e}:=\omega e,\ \omega \bar{e}:= \alpha e,\ \ell(\bar{e}):=\ell(e)=e.\]
We have thus completed every non-loop edge\begin{tikzpicture}[scale=0.35, baseline = {(0, -0.44)}]
\draw[-latex] (0.2,-1) to (1.8,-1);
\filldraw(0,-1+0)circle(1.5pt);
\filldraw(2,-1+0)circle(1.5pt);
\draw (1,-1-0.4)node{$e$};
\draw[left](0,-1+0)node{$u$};
\draw[right](2,-1+0)node{$v$};
\end{tikzpicture}\!
to a $2$-cycle
\begin{tikzpicture}[scale=0.4,  baseline = {(0, -0.49)}]
\draw[-latex] (0.1,-1-0.1)..controls (1,-1-0.75)..(1.9,-1-0.1);
\draw[-latex] (1.9,-1+0.1)..controls (1,-1+0.75).. (0.1,-1+0.1);
\filldraw(0,-1+0)circle(1.5pt);
\filldraw(2,-1+0)circle(1.5pt);
\draw (1,-1-0.9)node{$e$};
\draw(1,-1+0.9)node{$e$};
\draw[left](0,-1+0)node{$u$};
\draw[right](2,-1+0)node{$v.$};
\end{tikzpicture}
Let us denote the set of all positive edges so obtained (the original ones and the added ones) by $D$; then 
the oriented $E$-graph $\mD=(V,\til{D};\alpha,\omega,{}\inv,\ell,E)$  is weakly complete. 
Let ${\mathcal X}_1:=\ol{\mD}$ be its trivial completion.
The transition group $G_1:=\cT({\mathcal X}_1)$ is an $E$-group of permutations acting on the vertex set $V$.  For every $e\in E$, $[e]_{G_1}$ is either a transposition (if $e$ is not a loop edge then $[e]$ swaps $\alpha e$ and $\omega e$) or the identity permutation (if $e$ is a loop edge). Note that two distinct labels $e,f\in E$ may represent the same permutation of $V$ (since we allow multiple edges in $\mE$). 
\begin{Rmk}  Instead of completing all non-loop edges to  $2$-cycles we could equally well complete every such edge $e$ to an $n$-cycle for any fixed $n\ge 2$, by attaching to  the edge $u\overset{e}{\longrightarrow} v$ an $e$-path $u\overset{e}{\longleftarrow}\cdots\overset{e}{\longleftarrow} v$ consisting of a sequence of $n-1$ new edges labelled  $e$ and $n-2$ new intermediate vertices. In the resulting transition group, the permutation $[e]$ assigned to  $e$ then is a cyclic permutation of length $n$ mapping $\alpha e$ to $  \omega e $. Distinct labels coming from non-loop edges then automatically represent different permutations provided that $n\ge 3$.
\end{Rmk}

The transition from $G_k$ to $H_k$ is easily described.
Suppose we have already defined the graph ${\mathcal X}_k$ and thus the group $G_k=\cT({\mathcal X}_k)$. We set
\begin{equation}\label{eq: mY_k}
\mY_k:={\mathcal X}_k\,\sqcup\,\bigsqcup \bigl\{\ol{\mG_k[A]}\colon
A\subseteq E, |A|=k\bigr\}.
\end{equation}
Provided that $G_k$ is $k$-retractable, 
the transition group  $H_k = \cT({\mathcal Y}_k)$ is $(k+1)$-retractable and the expansion $G_k\twoheadleftarrow H_k$ is $k$-stable (Theorem~\ref{thm:first basic}).
In particular, $H_1$ is $2$-retractable.

\subsection{The transition $H_k\leadsto G_{k+1}$}
\label{subsec:HktoGk+1}
The expansion $H_k\twoheadleftarrow G_{k+1}$
is more delicate.
  We assemble  a complete $E$-graph
   ${\mathcal X}_{k+1} =\mY_k\sqcup \ol{\mZ_k}$ to obtain
   $G_{k+1}$ as the transition group  $G_{k+1} = \cT({\mathcal X}_{k+1})$.
   The new, weakly complete components of $\mZ_k$ will be constructed
   as augmentations of clusters and coset extensions based
   on $H_k$. To this end we first collect, for $k\ge 2$, properties
   of the precursors $G_k$ and $H_{k-1}$ of $H_k$,
   which then serve as conditions to be maintained
   inductively also in the passage to $G_{k+1}$.
   At level~$k$, we denote these inductive conditions as
   $\mbox{\sc Cond}_k$ for the pair $(G_k, H_{k-1})$.  
   So $\mbox{\sc Cond}_k$ will serve as inductive hypothesis
  for the construction of $\mZ_k$, and hence $G_{k+1}$,
   which then needs to guarantee that 
   the conditions  $\mbox{\sc Cond}_{k+1}$
   are  satisfied by the pair $(G_{k+1}, H_k)$. In the following, we
   identify subgraphs of $\mE$ with their labelled versions inside
   $\mX_1$.

 \begin{Cond}\label{def:condk}\rm
As conditions $\mbox{\sc Cond}_k$, for $k \geq 2$, we collect the following:
   \begin{enumerate}
   \item[(i)]
    $H_{k-1}$ and $G_k$ are $k$-retractable and the expansion
     $H_{k-1}\twoheadleftarrow G_{k}$ 
     is $(k-1)$-stable,
   \end{enumerate}
   and,  for every $B\subseteq E$ with $\vert B\vert\le k$,
   for any $\mG_k$-cover $\mC_{\mG_k}$ 
of any connected component
$\mC$ of $\mB=\langle B\rangle$ in $\mE\subseteq \mX_1$, 
the following hold:
\begin{enumerate}
\item[(ii)] $\mC_{\mG_k}$ is admissible for $\pss{B}$-coset extension, 
\item[(iii)] the full $\pss{B}$-coset extension
$\sfCE(G_k, \mC_{\mG_k};\bP_B)$ embeds into $\mG_k[B]$,
\item[(iv)] the {embedded} full $\pss{B}$-coset extension $\sfCE(G_k,\mC_{\mG_k};\bP_B)$ is bridge-free.
\end{enumerate}
\end{Cond}

By Theorem~\ref{thm:first basic}, (i) implies that
$H_k\twoheadrightarrow G_k$ in particular is $k$-stable {and $H_k$ is $k+1$-retractable by construction}, as already mentioned 
in connection with the definition of $H_k$.
Let $\psi_k\colon \mG_k\twoheadrightarrow {\mathcal X}_1$ be some
canonical graph morphism, $\chi_k\colon \mH_k\twoheadrightarrow \mG_k$
the graph morphism induced by the canonical morphism
$H_k\twoheadrightarrow G_k$, and let $\varphi_k=\psi_k\circ\chi_k$.
By $k$-stability, $\chi_k$ is injective on connected $B$-subgraphs for
$|B|\le k$.

Let $A\subseteq E$ be a set of $|A|=k+1$ (positive) edges of
$\mE\subseteq {\mathcal X}_1$ and 
$\mA=\langle A\rangle$ be the subgraph of $\mE$ spanned by $A$.
Let $\mC$ be a connected component of 
$\mA$ and $\mC_{\mH_k}$ be  an
{$\mH_k$-cover} of $\mC$, that is, some connected component of
$\varphi_k\inv (\mC)$.
We show that 
$\mC_{\mH_k}$ is admissible for $\pss{A}$-coset extension
(with respect to $H_k$)
and that the full $\pss{A}$-coset extension
$\sfCE(H_k,\mC_{\mH_k}; \bP_A)$ has the cluster property. 
From this it will follow that augmented coset extensions of the form
$\sfCE(H_k,\mC_{\mH_k}; \bP_A)\circv \mH_k[B]$ are well defined; they
will be essential ingredients of the graph $\mZ_k$ to be defined below
(Definition~\ref{def:Z_k}). 

Let $B\subsetneq A$ and let $\mU\subseteq \mC_{\mH_k}$ be some
$B$-component of $\mC_{\mH_k}$. Then $\varphi_k(\mU)\subseteq \mC$
is a $B$-component of $\mC$ and hence is a connected component of
$\langle B\rangle\subseteq \mC$.
\begin{center}
{\begin{tikzcd} \mathcal{U}\ar[draw=none]{r}[sloped,auto=false]{\hspace{-0.3cm}\subseteq\hspace{-0.435cm}} \arrow[d, two heads]
	&\mathcal{C}_{\mathcal{H}_k}\ar[draw=none]{r}[sloped,auto=false]{\hspace{-0.3cm}\subseteq\hspace{-0.435cm}} \arrow[d, two heads]
		&\mathcal{H}_k
		\arrow[d, two heads,  "\varphi_k"']\\
\varphi_k(\mathcal{U}) \ar[draw=none]{r}[sloped,auto=false]{\hspace{-0.3cm}\subseteq\hspace{-0.435cm}} 
	&\mathcal{C} \ar[draw=none]{r}[sloped,auto=false]{\hspace{-0.3cm}\subseteq\hspace{-0.435cm}}
		& \mathcal{X}_1
              \end{tikzcd}}
          \end{center}

By the inductive hypothesis, any $\mG_k$-cover $\mU'$ of
$\varphi_k(\mU)$ is admissible for
$\pss{B}$-coset extension
(with respect to $G_k$) and  $\sfCE(G_k,\mU';\bP_B)$ embeds
into $\mG_k[B]$ and is bridge-free by~(ii)--(iv). 
Since the morphism $\chi_k\colon \mH_k\twoheadrightarrow \mG_k$ is injective on $B$-components (that is, injective on $B$-cosets), it follows that $\mU'\cong\mU$ and hence also
\begin{equation}\label{eq:small components}
\sfCE(G_k,\mU';\bP_B)\cong \sfCE(H_k,\mU;\bP_B).
\end{equation}
Altogether, by~(iii) we have 
\begin{center}
{\begin{tikzcd}
\mathsf{CE}(H_k,\mU;\bP_B) \ar[draw=none]{d}[sloped,auto=false]{\hspace{-0.3cm}\cong\hspace{-0.435cm}} 
			&\mathcal{H}_k[B] \ar[draw=none]{d}[sloped,auto=false]{\hspace{-0.3cm}\cong\hspace{-0.435cm}} \\
\sfCE(G_k,\mU';\bP_B) \ar[r, hookrightarrow]
	& \mathcal{G}_k[B] 
              \end{tikzcd}}
\end{center}
so that $\sfCE(H_k,\mU;\mathbb{P}_B)$ canonically embeds into $\mH_k[B]$. 
It follows that condition \eqref{eq:freeness} of
Definition~\ref{def:admissible}  is fulfilled. Since this is true for every $B$-component $\mU$ for every proper subset $B$ of $A$ this implies that $\mC_{\mH_k}$
is admissible for
$\pss{A}$-coset extension
(with respect to $H_k$). Once more by the inductive
hypothesis (iv), 
every graph in \eqref{eq:small components} is
bridge-free. Then, by Theorem~\ref{thm:forward induction}, the full
$\pss{A}$-coset extension
$\sfCE(H_k,\mC_{\mH_k};\bP_A)$ itself has the cluster property.  As already mentioned, this guarantees that the augmented coset extensions $\sfCE(H_k,\mC_{\mH_k}; \bP_A)\circv \mH_k[B]$ of Definition~\ref{def:Z_k} (2) below are well defined.  We therefore can now define the components of the graph $\mZ_k$.
\begin{Def}\label{def:Z_k} The graph $\mZ_k$ is the disjoint union of
\begin{enumerate}
\item  all augmented $A$-clusters
\[\sfCL(H_k[A],\bP)\circv\mH_k[B]\]
for $A\subseteq E$ with $|A|=k+1$, $\bP$ a set of proper subsets of $A$, $v$ a vertex of $\sfCL(H_k[A],\bP)$ and $B\subsetneq A$;
\item all augmented full
  $\pss{A}$-coset extensions
\[\sfCE(H_k, \mC_{\mH_k};\bP_A)\circv \mH_k[B]\]
for $A\subseteq E$ with $|A|=k+1$, $\mC$ a connected component of $\mA=\langle A\rangle$, $\mC_{\mH_k}$ an $\mH_k$-cover of $\mC$, $\bP_A$ the set of all proper subsets of $A$,  $v$ a vertex of $\sfCE(H_k,\mC_{\mH_k};\bP_A)$ and $B\subsetneq A$.
\end{enumerate}
\end{Def}
We note that the augmented clusters and augmented 
coset extensions contain, for $B=\varnothing$, all ``plain'' clusters and coset extensions. Recall that $\mH_k = \cT({\mathcal Y}_{k})$ and $G_{k+1} =\cT({\mathcal X}_{k+1}) = \cT({\mathcal Y}_{k}\sqcup \ol{\mZ_k})$; see \eqref{eq: mY_k} for $\mY_k$.

\begin{Prop}\label{prop:k-stabilityH_k--->} The expansion $H_k\twoheadleftarrow G_{k+1}$ is $k$-stable and hence $G_{k+1}$ is $(k+1)$-retractable.
\end{Prop}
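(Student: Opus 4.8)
Since $\mX_{k+1}=\mY_k\sqcup\ol{\mZ_k}$ contains $\mY_k$, the inclusion of graphs induces the canonical morphism $G_{k+1}\twoheadrightarrow H_k$, and its $k$-stability is what has to be shown. Following the scheme of the proof of Theorem~\ref{thm:first basic}, it suffices to fix $A\subseteq E$ with $|A|=k$ and a word $p\in\til{A}^*$ with $[p]_{H_k}=1$ and to prove that the path $\pi_v(p)$ is closed at \emph{every} vertex $v$ of $\mX_{k+1}$; this yields $[p]_{G_{k+1}}=1$ and hence injectivity of the canonical morphism on $G_{k+1}[A]$. For $v\in\mY_k$ the path is closed because $H_k=\cT(\mY_k)$ and $[p]_{H_k}=1$, so the entire argument concerns the new components in $\ol{\mZ_k}$.

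Let then $v$ lie in a component $\mathcal N$ of $\ol{\mZ_k}$, i.e.\ in the trivial completion of an augmented $A'$-cluster $\sfCL(H_k[A'],\bP)\circw\mH_k[B]$ or of an augmented full coset extension $\sfCE(H_k,\mC_{\mH_k};\bP_{A'})\circw\mH_k[B]$ with $|A'|=k+1$. As $p\in\til{A}^*$, every letter outside $\til{A'}$ acts by a loop on $\mathcal N$; deleting these letters alters $\pi_v(p)$ only in loop edges and, by $(k+1)$-retractability of $H_k$, preserves the relation $[\,\cdot\,]_{H_k}=1$, so we may assume $\co(p)\subseteq A'$ and accordingly replace $A$ by $A\cap A'\subsetneq A'$. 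Then $\pi_v(p)$ remains inside the trivial completion $\ol{\mathcal D}$ of the $A$-component $\mathcal D$ of $v$ in the underlying graph of $\mathcal N$. By Corollary~\ref{cor:CcomponentBexpandedcluster} in the first case, and by Proposition~\ref{prop:2acyclicexpandedAext} together with Corollary~\ref{cor:Bcomponentscluster} in the second, $\mathcal D$ is a $(B\cap A)$-augmented $A$-cluster $\sfCL(H_k[A],\bO)\circu\mH_k[B\cap A]$ over $H_k$ (possibly degenerating to an $A$-cluster or to a full $A$-coset), for a suitable set $\bO$ of proper subsets of $A$.

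The expansion $H_k\twoheadrightarrow G_k$ is $k$-stable by Theorem~\ref{thm:first basic}, hence $(k-1)$-stable, and both $H_k$ and $G_k$ are $k$-retractable; Lemmas~\ref{lem:small cosets 1} and~\ref{lem:small cosets 2} therefore furnish an isomorphism of labelled $A$-graphs between $\mathcal D$ and the corresponding augmented $A$-cluster $\sfCL(G_k[A],\bO)\circv\mG_k[B\cap A]$ over $G_k$, which is a subgraph of $\mG_k[A]$. Moreover $[p]_{H_k}=1$ together with $|A|\le k$ gives $[p]_{G_k[A]}=1$ by $k$-stability of $H_k\twoheadrightarrow G_k$, i.e.\ $p$ labels closed paths in $\mG_k[A]$. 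The one remaining point---and the technical heart of the proof---is to transfer closedness from $\mG_k[A]$ to the completion $\ol{\mathcal D}$ of the \emph{cluster} $\mathcal D$, whose constituents are the full cosets $\mG_k[C]$ ($C\in\bO$) and the augmenting coset, all of size strictly less than $k$. Concretely, I would show that $\mG_k[A]$ covers $\ol{\mathcal D}$, so that $\cT(\ol{\mathcal D})$ is a quotient of $G_k[A]$ and inherits every relation $q=1$ of $G_k[A]$, in particular $p=1$. For the covering one argues in the spirit of Theorem~\ref{thm:first basic}: on each constituent $\mG_k[C]$ the word $p$ runs, up to loop edges, as the word $p'$ obtained by deleting all letters outside $C$, and retractability of $G_k[A]$ makes $p'$ close inside $\mG_k[C]$; the $2$- and $3$-acyclicity of retractable groups (Remark~\ref{rem:2and3acyc}), which controls how the constituents meet along their shared cores, then forces these locally closed pieces to close up globally. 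I expect this gluing step, rather than the bookkeeping of the component analysis, to be the delicate part.

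Granting the crux, $\pi_v(p)$ is closed at every vertex of $\mX_{k+1}$, so $[p]_{G_{k+1}}=1$ and the expansion $G_{k+1}\twoheadrightarrow H_k$ is $k$-stable. Finally, $H_k=\cT(\mY_k)=\cT\bigl(\mX_k\sqcup\bigsqcup_{|C|=k}\ol{\mG_k[C]}\bigr)$ is exactly a group of the form produced by Theorem~\ref{thm:first basic} and is $(k+1)$-retractable; since $G_{k+1}$ is a $k$-stable expansion of it, the final assertion of that theorem (every $k$-stable expansion of such a group is $(k+1)$-retractable) shows at once that $G_{k+1}$ is $(k+1)$-retractable.
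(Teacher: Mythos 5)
Your overall skeleton matches the paper's proof: reduce to the new components of $\ol{\mZ_k}$, identify the relevant component of the path via Corollaries~\ref{cor:Bcomponentscluster}, \ref{cor:CcomponentBexpandedcluster} and Proposition~\ref{prop:2acyclicexpandedAext} as an (augmented) cluster or coset, and get $(k+1)$-retractability at the end from the last assertion of Theorem~\ref{thm:first basic}. But the step you yourself flag as the crux --- that $\mG_k[A]$ covers $\ol{\mathcal D}$, so that $\cT(\ol{\mathcal D})$ is a quotient of $G_k[A]$ and inherits the relation $p=1$ --- is false whenever $\mathcal D$ is a proper cluster, and no appeal to retractability or $2$-/$3$-acyclicity can rescue it. Concretely, let $A=\{a,b\}$ and let $G[A]=\mathbb{Z}_n\times\mathbb{Z}_n$ with $a,b$ the two coordinate generators; this group is retractable (relations are detected by exponent sums, which survive deletion of a generator). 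The cluster $\sfCL(G[A],\{\{a\},\{b\}\})=\mG[\{a\}]\cup\mG[\{b\}]$ is the cross formed by the two axes, and in its trivial completion the vertex $[a]_{G[A]}$ carries a $b$-loop. For $p=aba^{-1}b^{-1}$ one has $[p]_{G[A]}=1$, yet the path labelled $p$ starting at the core vertex $1$ ends at $[b]_{G[A]}^{-1}\neq 1$, so it is not closed. Hence the transition group of the completed cluster \emph{avoids} a relation that $G[A]$ \emph{satisfies}, and no canonical morphism $\mG[A]\twoheadrightarrow\ol{\sfCL(G[A],\{\{a\},\{b\}\})}$ exists. Proposition~\ref{prop:retractable groups} yields such coverings only for single cosets $\ol{\mG[B]}$, never for unions of several cosets; indeed the entire construction depends on completed clusters creating genuinely new relations to be avoided --- if they were covered by $\mG_k$, adjoining them would accomplish nothing.

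The paper closes exactly this gap not by an intrinsic group-theoretic argument at the level of $G_k$, but by bookkeeping one level further down the tower: using $k$-stability of $H_k\twoheadrightarrow G_k$ and then Lemmas~\ref{lem:small cosets 1} and~\ref{lem:small cosets 2} applied to the $(k-1)$-stable expansion $G_k\twoheadrightarrow H_{k-1}$, the $C$-component in question is isomorphic to the corresponding (augmented) cluster over $H_{k-1}$ --- and those completed clusters are, by Definition~\ref{def:Z_k} at stage $k-1$, \emph{literally components of} $\ol{\mZ_{k-1}}\subseteq\mX_k$. Since $[p]_{H_k}=1$ forces $[p]_{G_k}=1$ ($H_k$ is an expansion of $G_k$; stability is not even needed for this direction), and since $G_k=\cT(\mX_k)$ acts on those components, the $p$-labelled paths close there by construction, not because of any covering by $\mG_k[A]$. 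This is the missing idea in your proposal: the clusters one encounters inside $\mZ_k$ were deliberately built into $\mX_k$ at the previous stage, and it is only this inductive fiat that makes $[p]_{H_k}=1$ ``know about'' them. Your transfer stops at clusters over $G_k$ sitting inside $\mG_k[A]$ --- one level too high --- which is precisely the point where the false covering claim becomes unavoidable.
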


\begin{proof} We need to prove $k$-stability, the second assertion then follows from Theorem~\ref{thm:first basic} by inductive hypothesis (i) and the definition of $H_k$. Let $C\subseteq E$ with $|C|=k$, let $p\in \til{C}^*$ and assume that $[p]_{G_{k+1}}\ne 1$; we need to show that $[p]_{H_k}\ne 1$. There exists a component $\mL$ of $\mY_k$ or of $\ol{\mZ_k}$ witnessing the inequality
$[p]_{G_{k+1}}\ne 1$.
That is, in this component there is a vertex $v$ such that $v\cdot
p\ne v$. If the witnessing component $\mL$ belongs to $\mY_k$, then we
are done since then $[p]_{H_k}\ne 1$ immediately follows from
$H_k = \cT({\mathcal Y}_k)$.
If $\mL$ is a component of $\ol{\mZ_k}$, then $\mL=\ol{\mM}$ where
$\mM$ is of the form (1) or (2) in Definition~\ref{def:Z_k},
and the path $p\colon v\longrightarrow v\cdot p$ runs in the
$C$-component $v\ol{\mM}[C]$. Recall that $v\ol{\mM}[C]$ denotes the
$C$-component of $v$ in the graph $\ol{\mM}$ while $\ol{v\mM[C]}$ is
the trivial completion of $v\mM[C]$, that is, the trivial completion
of the $C$-component of $v$ in $\mM$.
Obviously
\[v\mM[C]\subseteq v\ol{\mM}[C]\subseteq \ol{v\mM[C]},
\]
and the latter two graphs differ only in loop edges having labels
not in $C$.
Hence $C$-paths in $v\ol{\mM}[C]$ and $\ol{v\mM[C]}$
traverse the same edges and meet the same vertices. It is therefore
sufficient to look at $\ol{v\mM[C]}$ instead of $v\ol{\mM}[C]$. From
Corollaries~\ref{cor:Bcomponentscluster},
\ref{cor:CcomponentBexpandedcluster} and 
Proposition~\ref{prop:2acyclicexpandedAext}, and since the 
(plain) coset extensions in Definition~\ref{def:Z_k}~(2)
have the cluster property, it follows that, for the graph $\mM$ in
question,  the $C$-component $v\mM[C]$ must be isomorphic with one of the following:
\begin{romanenumerate}
\item
  a full $C$-coset $\mH_k[C]$, or
\item
a $C$-cluster $\sfCL(H_k[C],\bP)$ for some set $\bP$ of proper subsets
of $C$ (this includes, for $\bP=\{B\}$, also $B$-cosets $\mH_k[B]$ for
$B\subsetneq C$), or
\item
a $D$-augmented
$C$-cluster $\sfCL(H_k[C],\bP)\circu\mH_k[D]$ for
some set $\bP$ of proper subsets of $C$, some vertex $u$ of
$\sfCL(H_k[C],\bP)$ and  some proper subset $D$ of $C$. 
\end{romanenumerate}
In case~(i),  $v\mM[C]\cong\mH_k[C]$, so the claim $[p]_{H_k}\ne 1$ again
follows immediately. 
In case~(ii) we get
\[ v\mM[C] \cong \sfCL(H_k[C],\bP)\cong \sfCL(G_k[C],\bP)\cong
    \sfCL(H_{k-1}[C],\bP)\]
where the second isomorphism is obvious since $H_k[C]\cong G_k[C]$ by $k$-stability of $H_k\twoheadrightarrow G_k$ while the third isomorphism follows  from Lemma~\ref{lem:small cosets 1}. In case~(iii) we get
\begin{align*}
v\mM[C] &\cong \sfCL(H_k[C],\bP)\circu \mH_k[D]\\
&\cong
                              \sfCL(G_k[C],\bP)\circt \mG_k[D]
    \cong \sfCL(H_{k-1}[C],\bP)\circs \mH_{k-1}[D]
\end{align*}
where $t$ and $s$ are the images of $u$ under the canonical morphisms $H_k\twoheadrightarrow G_k$ and $H_k\twoheadrightarrow H_{k-1}$, respectively, and, again, the second isomorphism is obvious since $H_k[C]\cong G_k[C]$ and $\mH_k[D]\cong \mG_k[D]$ by $k$-stability of $H_k\twoheadrightarrow G_k$ while  the third isomorphism follows from Lemma~\ref{lem:small cosets 2}.
Hence, in cases~(ii) and (iii),
$\ol{v\mM[C]}$  is isomorphic with a component of $\ol{\mZ_{k-1}}$ so that $[p]_{G_k}\ne 1$,
from which again $[p]_{H_k}\ne 1$ follows. 
\end{proof}

From Theorem~\ref{thm:upward induction} it follows that for every set
$A\subseteq E$ with $|A|=k+1$ and every connected component
$\mC$ of $\mA$, every $\mG_{k+1}$-cover $\mC_{\mG_{k+1}}$
(that is, every connected component of $\psi_{k+1}\inv(\mC)$ in
$\mG_{k+1}$ where $\psi_{k+1}\colon \mG_{k+1}\twoheadrightarrow
{\mathcal X}_1$ is a canonical graph morphism) is admissible for
$\pss{A}$-coset extension,
and the full
$\pss{A}$-coset extension
$\sfCE(G_{k+1},\mC_{\mG_{k+1}};\bP_A)$ embeds into
$\mG_{k+1}[A]$ and is bridge-free.  If $|A|=l<k+1$ we have by
induction that, for every connected component $\mC$ of $\mA$,  the full
$\pss{A}$-coset extension
$\sfCE(G_l,\mC_{\mG_l};\bP_A)$ embeds into $\mG_l[A]$. But the
expansion $G_l\twoheadleftarrow G_{k+1}$ is $l$-stable whence
$\sfCE(G_l,\mC_{\mG_l};\bP_A)\cong
\sfCE(G_{k+1},\mC_{\mG_{k+1}};\bP_A)$ and $\mG_l[A]\cong
\mG_{k+1}[A]$.
We have thus maintained Condition~\ref{def:condk} in the passage from $k$ to
$k+1$ by having verified $\mbox{\sc Cond}_{k+1}$:
\begin{enumerate}
\item[(i)] $H_k$ and $G_{k+1}$ are $(k+1)$-retractable and the expansion $G_{k+1}\twoheadrightarrow H_k$ is $k$-stable (by Proposition~\ref{prop:k-stabilityH_k--->})
\end{enumerate}
and, 
for every $A\subseteq E$ with $|A|\le k+1$,  for any $\mG_{k+1}$-cover $\mC_{\mG_{k+1}}$ of every connected component $\mC$ of $\mA=\langle A\rangle$ in $\mE\subseteq \mX_1$, the following hold:
\begin{enumerate}
\item[(ii)] $\mC_{\mG_{k+1}}$ is admissible for
$\pss{A}$-coset extension,
\item[(iii)] the full $\pss{A}$-coset extension $\sfCE(G_{k+1},\mC_{\mG_{k+1}};\bP_A)$ embeds into $\mG_{k+1}[A]$,
\item[(iv)] the {embedded} full $\pss{A}$-coset extension $\sfCE(G_{k+1},\mC_{\mG_{k+1}};\bP_A)$ is bridge-free.
\end{enumerate}
We check that the base case for this inductive procedure,
             $\mbox{\sc Cond}_{2}$ for the pair $(G_2,H_1)$, goes through.
The group $H_1$ is $2$-retractable and so is $G_2$ since
$G_2\twoheadrightarrow H_1$ is $1$-stable (cf.\ Theorem~\ref{thm:first basic}). 
By Proposition~\ref{prop:|A|=2}, for every set
$A\subseteq E$ with $|A|=2$, every $\mH_1$-cover $\mC_{\mH_1}$ of every
component $\mC$ of $\mA$ is admissible for
$\pss{A}$-coset extension (with respect to $H_1$)
and $\sfCE(H_1,\mC_{\mH_1}; \bP_A)$ has the cluster
property. Theorem~\ref{thm:upward induction} then implies that the $\mG_2$-cover $\mC_{\mG_2}$ is admissible for $\pss{A}$-coset extension (with respect to $G_2$) and that
$\sfCE(G_2,\mC_{\mG_2};\bP_A)$ embeds in $\mG_2[A]$ and is bridge-free
(the assertions for $G_2$ can also be checked by direct inspection). In other words, we have shown that conditions $\mbox{\sc Cond}_{2}$ are satisfied by the pair $(G_2,H_1)$.
Altogether the series of expansions
\[G_1\twoheadleftarrow H_1\twoheadleftarrow G_2\twoheadleftarrow\cdots\twoheadleftarrow G_{|E|-1}\twoheadleftarrow H_{|E|-1}\twoheadleftarrow G_{|E|}\] is well defined and $G=G_{|E|}$ is retractable.

\subsection{Properties of $G=G_{|E|}$}
We need to argue that $G$ satisfies 
the requirements of Lemma~\ref{thm:main theorem}. Requirement~(2),
that  $G$ is retractable, and therefore has a content function
by Proposition~\ref{prop:retractablehasconctent},
has already been proved.

We are left with showing requirements~(1) and~(3):
\begin{enumerate}
\item[(1)] that every permutation of $E$ induced by an
automorphism of $\mE$ extends to an automorphism of $G$,
and
\item[(3)] that for every word which forms a path $u\longrightarrow v$ in
$\mE$ there is a $G$-equivalent word which also 
forms a path $u\longrightarrow v$ and uses only edges of the (common)
$G$-content, or $u=v$ in case
of empty content.
\end{enumerate}

We start with item (1);
(3) will then be dealt with
 in Lemma~\ref{lem:contentconnected} and Corollary~\ref{cor:crucial content}.
In the context of~(1), ``an automorphism of $\mE$'' refers to any automorphism
of the \textsl{unlabelled oriented} graph $\mE=(V, \til{E};\alpha,\omega,{}^{-1})$.
Recall from the definition of an
automorphism of an oriented graph that every such automorphism of
$\mE$ is required to induce a permutation on the set $E$ of positive
edges of $\mE$, hence induces a permutation on our labelling alphabet $E$.
Similarly, ``an automorphism of $G$'' means automorphism of the mere
group $G$ (rather than of $G$ as an $E$-group,
which cannot have non-trivial automorphisms).

\begin{Prop}\label{prop:auto extends} Every permutation $E\to E$ induced by an automorphism of the oriented graph $\mE$ extends to an automorphism of $G$.
\end{Prop}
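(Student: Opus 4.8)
The plan is to reduce the assertion to an invariance property of the relations satisfied by $G$, and then to establish that property by carrying the symmetry $\tau$ through the inductive construction of the defining graph $\mX_{|E|}$. Let $\tau$ be an automorphism of the oriented graph $\mE$ and let $\pi\colon E\to E$ be the induced permutation, extended to an involution-respecting permutation of $\til E$ by $\pi(e^{-1})=\pi(e)^{-1}$ and to the letterwise automorphism $\hat\pi$ of the free involutory monoid $\til E^*$. First I would reduce the claim to showing $[p]_G=1\Longleftrightarrow[\hat\pi(p)]_G=1$ for every $p\in\til E^*$. Indeed, granting this, the assignment $[p]_G\mapsto[\hat\pi(p)]_G$ is well defined (for $[p]_G=[q]_G$ means $[pq^{-1}]_G=1$, hence $[\hat\pi(p)\hat\pi(q)^{-1}]_G=1$, i.e.\ $[\hat\pi(p)]_G=[\hat\pi(q)]_G$), is a homomorphism because $\hat\pi$ respects concatenation and inversion, and is invertible with inverse induced by $\pi^{-1}$ (the permutation induced by $\tau^{-1}$); the resulting automorphism of $G$ extends $\pi$.

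Since $G=\cT(\mX_{|E|})$, the relation $[p]_G=1$ holds exactly when every $p$-labelled path in $\mX_{|E|}$ is closed. I will call a bijection of the vertices and edges of an $E$-labelled graph a \emph{$\pi$-relabelling} if it respects $\alpha,\omega,{}^{-1}$ and sends each edge of label $a$ to an edge of label $\pi(a)$; such a map carries $p$-paths bijectively to $\hat\pi(p)$-paths and hence preserves closedness, so a $\pi$-relabelling of $\mX_{|E|}$ onto itself yields the desired equivalence. A $\pi$-relabelling of the defining graph of any transition group induces in turn a relabelling automorphism $[p]\mapsto[\hat\pi(p)]$ of that group, and thus a $\pi$-relabelling of its Cayley graph that fixes $1$ and sends each coset graph $\mG[A]$ to $\mG[\pi(A)]$. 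The plan is to produce, by induction along the series \eqref{eq:series of graphs}, $\pi$-relabellings $T_k$ of $\mX_k$ and $S_k$ of $\mY_k$ (up to isomorphism of components, which is all that the transition group detects). For the base graph $\mX_1=\ol{\mF}$ this is immediate: $\tau$ sends each positive edge $e$ and its added twin to $\tau(e)$ and its twin, and the completion loops are permuted because a vertex $w$ lies on an $a$-cycle iff $\tau(w)$ lies on a $\pi(a)$-cycle. The step $\mX_k\leadsto\mY_k$ is equally direct, since the relabelling $\pi$ permutes the $k$-element subsets $A$ and hence the disjoint summands $\ol{\mG_k[A]}$ via the induced relabelling of $\mG_k$.

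The delicate step is $\mY_k\leadsto\mX_{k+1}=\mY_k\sqcup\ol{\mZ_k}$, and the main obstacle lies in the coset-extension components of $\mZ_k$. The induced $\pi$-relabelling $\til S_k$ of $\mH_k$ fixes $1$ and sends $\mH_k[A]$ to $\mH_k[\pi(A)]$, so it carries an augmented cluster $\sfCL(H_k[A],\bP)\circv\mH_k[B]$ to the one built from $\sfCL(H_k[\pi(A)],\pi(\bP))$ and $\mH_k[\pi(B)]$ at the vertex $\til S_k(v)$, thereby permuting the cluster components of $\mZ_k$. For the augmented coset extensions $\sfCE(H_k,\mC_{\mH_k};\bP_A)\circv\mH_k[B]$ the difficulty is that the covers $\mC_{\mH_k}$ are defined through the fixed canonical morphism $\varphi_k\colon\mH_k\to\mX_1$, which is not literally $\tau$-equivariant. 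I would resolve this by observing that $\varphi_k^\tau:=T_1\circ\varphi_k\circ\til S_k^{-1}$ is again a label-preserving surjective graph morphism $\mH_k\to\mX_1$, hence---by the uniqueness of canonical morphisms with respect to a base vertex, $\mX_1$ being connected---is itself a canonical morphism. Since $T_1$ restricts to $\tau$ on $\mE\subseteq\mX_1$, one computes $(\varphi_k^\tau)^{-1}(\tau\mC)=\til S_k(\varphi_k^{-1}(\mC))$, so $\til S_k$ sends each $\mH_k$-cover of $\mC$ to an $\mH_k$-cover of $\tau\mC$ with respect to $\varphi_k^\tau$. Finally, any two canonical morphisms $\mH_k\to\mX_1$ differ by a left translation $L_g$ of $\mH_k$ (again by uniqueness, choosing $g$ with $\varphi_k(g)$ the base vertex of $\varphi_k^\tau$), which is a labelled-graph automorphism; hence the $\varphi_k^\tau$-covers of $\tau\mC$ are labelled-graph isomorphic to the $\varphi_k$-covers of $\tau\mC$, and the corresponding coset-extension components of $\mZ_k$ are isomorphic. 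Consequently the set of isomorphism types of components of $\mX_{k+1}$ is invariant under $\pi$-relabelling; since $\cT$ of a disjoint union depends only on these types, this gives $[p]_{G_{k+1}}=1\Leftrightarrow[\hat\pi(p)]_{G_{k+1}}=1$. Taking $k=|E|$ completes the induction and the proof. The hard part is precisely this cover-compatibility in the last step, where the base-point ambiguity of the canonical morphism must be absorbed into a left translation.
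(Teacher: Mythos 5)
Your proposal is correct and takes essentially the same route as the paper's proof: an induction along the series of defining graphs $\mX_1\subseteq\mY_1\subseteq\cdots\subseteq\mX_{|E|}$, extending the permutation of $E$ to a label-permuting isomorphism at each stage and noting that the transition group then inherits the automorphism (equivalently, that the set of relations $p=1$ is invariant under the relabelling). In fact your treatment of the delicate step $\mY_k\leadsto\mX_{k+1}$ --- absorbing the base-point ambiguity of the canonical morphism $\varphi_k$ into a left translation so that covers of $\tau\mC$ are well defined up to labelled-graph isomorphism --- spells out precisely what the paper leaves implicit in its remark that the construction involves no symmetry-breaking choices.
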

\begin{proof} Let $\gamma$ be a permutation of $E$ induced by an
  automorphism  of $\mE$, also denoted $\gamma$.
We demonstrate the required property for all $G_k$ and $H_k$, 
 by induction on $k$.
First note that  $\gamma$   (uniquely) extends to an automorphism
$\hat{\gamma}$   of $\mathcal{X}_1$ from which the claim follows for
the group $G_1$. Indeed, for every pair of vertices $u,v\in \mX_1$ and
every word $p\in \til{E}^*$, we have $p\colon u\longrightarrow v$ if
and only if ${}^\gamma p\colon {}^{\hat{\gamma}}u\longrightarrow {}^{\hat{\gamma}}v$.
Consequently, for every word $p\in \til{E}^*$, $G_1$ satisfies the relation $p=1$ if and only if it satisfies ${}^{\gamma}p=1$. 
  
  So let $k\ge 1$ and assume inductively that $\gamma$ extends to an automorphism $\hat{\gamma}$ of $\mX_k$ (this means that there is an automorphism $\hat{\gamma}$ of the oriented graph $\mX_k$ such that for every edge $e\in \mX_k$ we have $\ell({}^{\hat{\gamma}}e)={}^{\gamma}\ell(e)$); by the same reasoning as for $k=1$ we see that in this case $\gamma$ extends to an automorphism of $G_k$. From the definition of the graph $\mY_k$ it now follows that $\gamma$ extends to an automorphism $\hat{\gamma}$ of $\mY_k$ which again implies that $\gamma$ extends to an automorphism of $H_k$. From this in turn it follows that $\gamma$ extends to an automorphism of $\mX_{k+1}$ and therefore again to an automorphism of $G_{k+1}$.
\end{proof}

The assertion of the last proposition is essentially a direct
consequence of the fact that the entire process behind our
construction of $G$, on the basis of the given oriented graph $\mE$, is symmetry-preserving. Indeed, none of the intermediate steps involves
any choices that could possibly break symmetries in the input data,
i.e.\ could be incompatible with isomorphisms between oriented input
graphs $\mE$. 
In particular, the inductive construction steps 
reflected in Theorems~\ref{thm:forward induction} and~\ref{thm:upward
  induction} proceed by cardinality of subsets of $E$ and treat all subsets
of the same size uniformly and in parallel.\footnote{This should be
  contrasted e.g.\ with constructions based on some
enumeration of the subsets of $E$, which could well break symmetries.}
Any isomorphism between oriented graphs $\mE\cong\mE'$ 
would successively extend to isomorphisms between the associated graphs 
$\mX_i \cong \mX_i'$ and $\mY_i \cong \mY_i'$ and induced isomorphisms 
between their transition groups  $G_i \cong G_i'$ and $H_i \cong H_i'$.  In this sense, the entire inductive process
underlying the expansion chain~(\ref{eq:series of H and G})
is isomorphism-respecting, hence in particular compatible with
permutations of $E$ stemming from automorphisms of $\mE$.

Finally, we have to deal with requirement (3) of Lemma~\ref{thm:main theorem}.
Recall that for a word $p\in \til{E}^*$, $\co(p)$ is the set of all
letters $a\in E$ for which $a$ or $a\inv$ occurs in $p$. 
The following lemma is crucial for establishing~(3). 
{The reader is 
invited to recall the group $G$ defined in \eqref{eq:series of H and G}, the graphs $\mX_{k+1}:=\mY_k\sqcup \ol{\mZ_k}$ (for $\mZ_k$ see Definition~\ref{def:Z_k}) and the coset extensions $\mathsf{CE}(G,\mK;\bP)$ defined in \eqref{eq:CEgeneral}; the full coset extension $\mathsf{CE}(G,\mK;\bP_A)$ is defined immediately before 
Remark~\ref{rmk: proper A-graph}. 
Also recall that the Cayley graph $\mG$ of $G$ covers, in the sense of Definition~\ref{def:covering relation},
any connected component of any one of the graphs $\mX_k$.}
\begin{Lemma}\label{lem:contentconnected}
Let $p\in \til{E}^*$ be a word that forms a path $u\longrightarrow v$ in $\mE$; let $A=\co(p)$ and suppose that for some letter $a\in A$ and $B=A\setminus\{a\}$ there exists a word $r\in \til{B}^*$ such that $[p]_G=[r]_G$. Then there exists a word $q\in \til{B}^*$ such that $[p]_G=[q]_G$ and, in addition, $q$  forms a path $u\longrightarrow v$ in $\mE$.
\end{Lemma}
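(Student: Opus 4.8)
The plan is to convert the purely algebraic hypothesis $[p]_G=[r]_G$ with $r\in\til B^*$ into geometric $B$-connectivity inside a cover of the relevant component of $\mE$, the decisive tool being the bridge-freeness that the construction of $G=G_{|E|}$ has secured. First I would dispose of the degenerate case $B=\varnothing$ (i.e.\ $|A|=1$): then $[p]_G=[r]_G=1$, so the path labelled $p$ issued at $1$ in $\mG$ is closed, and projecting along any canonical morphism $\mG\twoheadrightarrow\mX_1$ sending $1$ to $u$ forces $u=v$, whence the empty word $q=1$ does the job. So assume $|A|\ge 2$. Fix the canonical morphism $\varphi\colon\mG\twoheadrightarrow\mX_1$ with $\varphi(1)=u$ (it exists since $\mG$ covers the connected component $\mX_1$ of $\mX_{|E|}$). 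Let $\mC$ be the connected component of $\langle A\rangle$ in $\mE$ containing $u$; since $\co(p)=A$ and $p$ forms a path $u\longrightarrow v$ in $\mE$, this path runs entirely in $\mC$, so $v\in\mC$ as well. Let $\mC_{\mG}$ be the $\mG$-cover of $\mC$ containing the vertex $1$. By the path lifting property the $\mC$-path labelled $p$ lifts, starting at $1$, to a path inside $\mC_{\mG}$; its terminal vertex is $[p]_G$ and $\varphi([p]_G)=v$. In particular both $1$ and $[p]_G$ are vertices of the skeleton $\mC_{\mG}$.

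Now I would invoke the properties of $G=G_{|E|}$ recorded in Condition~\ref{def:condk}, valid for every $A\subseteq E$ at the top level: $\mC_{\mG}$ is admissible for $\pss{A}$-coset extension, and the full $\pss{A}$-coset extension $\sfCE(G,\mC_{\mG};\bP_A)$ embeds into $\mG[A]$ and is bridge-free. The hypothesis $[p]_G=[r]_G$ with $r\in\til B^*$ says precisely that $1$ and $[p]_G$ are joined by a $B$-path in $\mG$; as this path starts at $1$ it stays in the $A$-component $\mG[A]$ of $1$, so $1$ and $[p]_G$ are $B$-connected in $\mG[A]$. Both being vertices of $\sfCE(G,\mC_{\mG};\bP_A)\subseteq\mG[A]$, bridge-freeness (Definition~\ref{def:bridge free}(2)) upgrades this to a $B$-path joining them \emph{inside} $\sfCE(G,\mC_{\mG};\bP_A)$.

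It then remains to push this $B$-path back into the skeleton and finally into $\mE$. The $B$-component of $1$ in $\sfCE(G,\mC_{\mG};\bP_A)$ meets the skeleton, so it is the constituent coset $v_i\mG[B]$ extending the $B$-component of $1$ in $\mC_{\mG}$; under the embedding into the folded graph $\mG[A]$ this is a full $B$-coset, which cannot be left by any $B$-edge, so it is the entire $B$-component and its intersection with $\mC_{\mG}$ is the single connected subgraph $v_i\mC_{\mG}[B]$. Hence $1$ and $[p]_G$ both lie in $v_i\mC_{\mG}[B]$ and are joined by a $B$-path inside the skeleton $\mC_{\mG}$; let $q\in\til B^*$ be its label. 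Then $[q]_G=1\cdot q=[p]_G$, and $\varphi$ carries this path to a $B$-path $u\longrightarrow v$ running in $\mC\subseteq\mE$, so $q$ forms a path $u\longrightarrow v$ in $\mE$, as required.

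The main obstacle is exactly the passage from the algebraic relation $[p]_G=[r]_G$ to connectivity inside the cover $\mC_{\mG}$: a priori $r$ need not trace a path anywhere near $\mC_{\mG}$, and could realise the equality by a ``shortcut'' through $\mG[A]$ that is invisible in the cover. Bridge-freeness of $\sfCE(G,\mC_{\mG};\bP_A)$ is precisely the property ruling out such shortcuts, and it is what the whole forward/upward induction (Theorems~\ref{thm:forward induction} and~\ref{thm:upward induction}) was designed to guarantee. The only remaining delicate point is confirming that the $B$-path can be taken inside the skeleton rather than wandering into constituent cosets, which follows from the $B$-closedness of a full constituent coset together with the fact that such a coset meets the skeleton in a single connected $B$-component.
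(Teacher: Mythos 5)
Your proof is correct, but it takes a genuinely different route from the paper's. The paper descends one level: for $|A|=k+1$ it uses the canonical morphism $\psi\colon \mG\twoheadrightarrow \ol{\sfCE(H_k,\mA_k;\bP_A)}$ onto the completed coset extension built over $H_k$ (available because that graph is a component of $\mZ_k\subseteq\mX_{k+1}$), traces the images of both the $p$-labelled and the $r$-labelled paths there --- the former stays in the skeleton $\mA_k$, the latter in the constituent coset $1\mH_k[B]$ --- and concludes that their common endpoint lies in $\mA_k\cap 1\mH_k[B]$, the $B$-component of $1$ in $\mA_k$; it then still needs $k$-stability of $G\twoheadrightarrow H_k$ to convert $[q]_{H_k}=[r]_{H_k}$ into $[q]_G=[r]_G$. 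You instead stay entirely at the top level: you apply the maintained invariant $\mbox{\sc Cond}_{|E|}$ of Condition~\ref{def:condk} (admissibility, embedding into $\mG[A]$, bridge-freeness) to the $\mG$-cover $\mC_{\mG}$ --- a package the paper does establish for $G=G_{|E|}$, including sets $A$ of full size, but never invokes directly in its own proof of this lemma. Bridge-freeness of $\sfCE(G,\mC_{\mG};\bP_A)$ replaces the covering/unfolding argument, and since the resulting $B$-path already lies in $\mC_{\mG}\subseteq\mG$ you get $[q]_G=[p]_G$ with no stability transfer at the end. Both arguments ultimately hinge on the same geometric fact, which you justify correctly: a constituent coset meets the skeleton in exactly one $B$-component of the skeleton, and, being a full $B$-coset inside a folded graph, it cannot be exited by $B$-edges. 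What the paper's route buys is an explicit identification of which components of $\mZ_k$ forbid the dangerous relations (this feeds the discussion following the lemma); what your route buys is economy: it shows that the top-level conditions (ii)--(iv), whose maintenance otherwise only serves to keep the induction running, already imply the lemma on their own.
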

\begin{proof}
First recall that every loop edge $e$ of $\mE$ induces the identity permutation on the set $V$ of vertices of $\mX_1$, whence $[e]_{G_1}=1$; then $[e]_G=1$ follows from the fact that the expansion $G\twoheadrightarrow G_1$ is $1$-stable. Hence, if $p$ contains only loop edges then $u=v$,  the path meets only the
vertex $u$  and $[p]_G=1$ so that for $q$ we may choose the empty
word~$1$, which  labels the empty path $u\longrightarrow u$ and
$[p]_G=[1]_G$.

If $e$ is not a loop edge, then no power $e^n$ or $e^{-n}$ for  $n\ge 2$ forms a path; therefore, if $|A|=1$ the only possibilities for $p$ are $f(f^{-1}f)^n$ and $(ff^{-1})^{n+1}$ for some $n\ge 0$ and $f\in \{e,e^{-1}\}$. In these cases the claim is obvious.
 
In the following we use the notation of the series \eqref{eq:series of H and G} and denote the Cayley graphs of $H_k$ and $G$ by $\mH_k$ and $\mG$, respectively.
So, let $|A|=k+1$ for some $k\ge 1$, and let $\mA=\langle A\rangle=\langle p\rangle$ 
be the subgraph of $\mE$ spanned by $A$, which, by definition, is the same as the subgraph of $\mE$ spanned by the path $p$ (which therefore is connected).
Abusing notation, we denote the labelled version of $\mA$ inside $\mX_1$ also by $\mA$ and let $\varphi_u\colon \mH_k\twoheadrightarrow {\mathcal X}_1$ be the canonical morphism mapping $1\in \mH_k$ to $u$; let $\mA_k\subseteq \mH_k$ be the cover of $\mA$ in $\mH_k$ with $1\in \mA_k$ (that is, the connected component of $\varphi_u\inv(\mA)$ which contains the vertex $1$). The path 
$p$ in $\mE$, or, more precisely,
the path $\pi_u^{\mX_1}(p)$ lifts to the path $\pi_1^{\mA_k}(p)$. In particular, in $\mA_k$ there is a $p$-labelled path starting at $1$.
We consider the full
$\pss{A}$-coset extension
$\sfCE(H_k,\mA_k;\bP_A)$ and note that $\sfCE(H_k,\mA_k;B)$ is a subgraph of it.  We also have the path $\pi_1^\mG(p)$ in $\mG$ starting at $1$ and being labelled $p$. The canonical morphism $\psi\colon \mG\twoheadrightarrow \ol{\sfCE(H_k,\mA_k;\bP_A)}$ (mapping $1\in \mG$ to $1\in \mA_k$) maps $\pi_1^\mG(p)$ to $\pi_1^{\ol{\sfCE(H_k,\mA_k;\bP_A)}}(p)$, but this path runs entirely in $\mA_k$, hence coincides with the  path $\pi_1^{\mA_k}(p)$ mentioned earlier. 

By assumption, $[p]_G=[r]_G$ for some word $r\in \til{B}^*$. The paths
$\pi_1^\mG(p)$ and $\pi_1^\mG(r)$ have the same terminal vertex,
namely $[p]_G=[r]_G$. The  path $\pi_1^\mG(r)$ is mapped by $\psi$
onto the path $\pi_1^{\ol{\sfCE(H_k,\mA_k;\bP_A)}}(r)$. But the
$B$-component of $1$ in $\ol{\sfCE(H_k,\mA_k;\bP_A)}$ is the full
$B$-coset $1\mH_k[B], $ which is contained in $\sfCE(H_k,\mA_k;B)$.
So the latter graph contains a path labelled $r$ starting at $1$, and that
path $\pi_1^{\sfCE(H_k,\mA_k;B)}(r)$ actually runs inside $1\mH_k[B]$.
Since the paths $\pi_1^\mG(r)$ and  $\pi_1^\mG(p)$ have the same
terminal vertex, so have the paths
\[\pi_1^{1\mH[B]}(r)=\pi_1^{\sfCE(H_k,\mA_k;B)}(r)\mbox{ and }\pi_1^{\mA_k}(p).\]
It follows that the terminal vertex $v'$ of $\pi_1^{\mA_k}(p)$ is in $\mA_k\cap 1\mH_k[B]$. But $\mA_k\cap 1\mH_k[B]$ is just the $B$-component of $1$ in $\mA_k$, which is a connected $B$-graph. Altogether, there exists a path $\pi\colon 1\longrightarrow v'$ running in $\mA_k\cap 1\mH_k[B]$; let $q\in \til{B}^*$ be the label of that path. By construction, $[q]_{H_k}=[r]_{H_k}$, hence $[q]_G=[r]_G$ since the expansion $H_k\twoheadleftarrow G$ is $k$-stable, and therefore also $[q]_G=[p]_G$. Finally, the canonical morphism $\varphi_u\colon \mH_k\twoheadrightarrow {\mathcal X}_1$ (restricted to $1\mH_k[B]$) maps $\pi=\pi_1^{\mA_k\cap 1\mH_k}(q)$ to the path $
\pi_u^{\mX_1}(q)$ with initial vertex $u=\varphi_u(1)$ and terminal vertex $v=\varphi_u(v')$ and label $q$. 
If we ignore the labelling then the latter path is the sequence $q$ of edges in $\mE$ which forms a path $u\longrightarrow v$.
Altogether, $q$ forms a path $u\longrightarrow v$ in $\mE$.
\end{proof}
This proof sheds some light on the r\^oles that the components of
$\mZ_k$ play in the transition $H_k\leadsto G_{k+1}$.  If there is a word $p$ with $\co(p)=A$ and $|A|=k+1$ such that
$p$ forms a path $u\longrightarrow v$ in $\mE$,  and some letter $a\in
A$ does not belong to the $H_k[A]$-content of $p$ then the subgraph
$\sfCE(H_k,\mA_k;B)$ of $\ol{\sfCE(H_k,\mA_k;\bP_A)}$ (for
$\mA=\langle A\rangle$ and $B=A\setminus \{a\}$) guarantees that the
next group $G_{k+1}$ avoids 
\textsl{every} relation $p=r$ for any $r\in \til{B}^*$
(compare Remark~\ref{rmk:unfolding}) unless there exists a word
$q\in \til{B}^*$ such that $[p]_{H_k}=[q]_{H_k}$ and $q$ forms a path
$u\longrightarrow v$ in $\mE$. From this point of view, 
namely to avoid all relations 
that would obstruct Lemma~\ref{lem:contentconnected}, 
it would be sufficient to let $\mZ_k$ be comprised of all
graphs $\sfCE(H_k,\mA_k;B)$ of the mentioned kind  (after making them
weakly complete by extending edges to $2$-cycles whenever needed).{\footnote{{This means that only ``basic'' $B$-coset extensions of type $\mathsf{CE}(G,\mK;B)$ as in~\eqref{eq:CE(K,B)} would be sufficient for proving Lemma~\ref{lem:contentconnected}}.}} 
However, when
attempting this approach, namely letting $\mZ_k$ be comprised
of just all graphs of the mentioned form, the authors failed to prove $k$-stability of
the expansion $H_k\twoheadleftarrow G_{k+1}$, and it is not clear
whether or not $k$-stability can be achieved by this procedure.
Hence, except for the graphs $\sfCE(H_k,\mA_k;B)$, which appear as
subgraphs of the full coset extensions $\sfCE(H_k,\mA_k;\bP_A)$, all
the machinery used to set up the graph $\mZ_k$ --- (augmented) clusters, (augmented)
full coset extensions, all of Section~\ref{sec:2results} --- serves to achieve $k$-stability of the transition $H_k\leadsto G_{k+1}$.

\medskip
If, in Lemma~\ref{lem:contentconnected}, $[p]_G=1$ then necessarily
$u=v$ since in this case the path $\pi^\mG_1(p)$ is closed and the
canonical morphism $\varphi_u\colon \mG\twoheadrightarrow \mX_1$ maps
this path onto the closed path 
$\pi_u^{\mX_1}(p)$. The path $p$ in $\mE$ obtained by
ignoring the labelling
then clearly is also closed. Iterated
application of Lemma~\ref{lem:contentconnected} leads to the following;
for the definition of a content function $\mathrm{C}$ the reader should recall Definition~\ref{def:G-content}.
\begin{Cor}\label{cor:crucial content} Let $p\in \til{E}^*$ be a word which forms a path $u\longrightarrow v$ in $\mE$; then there exists a word $q\in \til{E}^*$ which uses only letters (i.e.\ edges) from the content $\mathrm{C}([p]_G)$ (and/or their inverses) such that $[p]_G=[q]_G$ and $q$ forms a path $u\longrightarrow v$ in $\mE$. If $\mathrm{C}([p]_G)=\varnothing$, then $u=v$ and $q$ is  the empty word. If $\mathrm{C}([p]_G)\ne\varnothing$, then the graph $\langle \mathrm{C}([p]_G)\rangle=\langle \co(q)\rangle$ is connected and contains the vertices $u$ and $v$. 
\end{Cor}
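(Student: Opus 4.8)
The plan is to obtain Corollary~\ref{cor:crucial content} by iterating Lemma~\ref{lem:contentconnected}, peeling off one superfluous letter at a time until the content $\mathrm{C}([p]_G)$ is reached. Throughout I would use that $G$ has already been shown to be retractable, so that it carries the content function $\mathrm{C}$ of Definition~\ref{def:G-content} and that every subset $A\subseteq E$ yields a retract endomorphism $\psi_A\colon G\twoheadrightarrow G[A]$ (acting on words by deleting all letters outside $A$ and fixing every element of $G[A]$).

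The core is a single reduction step, which I would isolate first. Suppose $q'\in\til{E}^*$ forms a path $u\longrightarrow v$ in $\mE$ with $[q']_G=[p]_G$, put $A':=\co(q')$, and assume $a\in A'\setminus\mathrm{C}([p]_G)$. Since $\mathrm{C}([q']_G)=\mathrm{C}([p]_G)$ and $a\notin\mathrm{C}([q']_G)$, the definition of the content as an intersection furnishes a word $w$ with $[w]_G=[q']_G$ and $a\notin\co(w)$. Applying the retraction $\psi_{A'}$ and using $[q']_G\in G[A']$ (so that $\psi_{A'}$ fixes it), the word $w_{A'}$ obtained by deleting from $w$ all letters outside $A'$ satisfies $[w_{A'}]_G=[q']_G$ and lies in $\til{(A'\setminus\{a\})}^*$. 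This is exactly the hypothesis required to invoke Lemma~\ref{lem:contentconnected} with $p$ replaced by $q'$, $A$ by $A'$, and $B$ by $A'\setminus\{a\}$; it then produces a word $q''\in\til{(A'\setminus\{a\})}^*$ forming a path $u\longrightarrow v$ with $[q'']_G=[q']_G=[p]_G$.

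Next I would run the iteration. Starting from $q_0:=p$, as long as $\co(q_i)\neq\mathrm{C}([p]_G)$ I select some $a\in\co(q_i)\setminus\mathrm{C}([p]_G)$ (which is nonempty, since $\mathrm{C}([p]_G)=\mathrm{C}([q_i]_G)\subseteq\co(q_i)$ always holds) and apply the reduction step to obtain $q_{i+1}$ with $\co(q_{i+1})\subseteq\co(q_i)\setminus\{a\}$, still forming a path $u\longrightarrow v$ and still representing $[p]_G$. Because $|\co(q_i)|$ strictly decreases at each step, the process terminates with a word $q$ satisfying $\mathrm{C}([p]_G)\subseteq\co(q)\subseteq\mathrm{C}([p]_G)$, hence $\co(q)=\mathrm{C}([p]_G)$, with $[q]_G=[p]_G$ and $q$ forming a path $u\longrightarrow v$; this is the desired word.

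It remains to dispose of the two special cases. If $\mathrm{C}([p]_G)=\varnothing$, then $\co(q)=\varnothing$ forces $q$ to be the empty word, which forms only the empty path, so $u=v$; alternatively, as noted just before the statement, $[p]_G=1$ makes $\pi_1^\mG(p)$ closed and its image $\pi_u^{\mX_1}(p)$ under $\varphi_u\colon\mG\twoheadrightarrow\mX_1$ closed as well, whence $u=v$. If $\mathrm{C}([p]_G)\neq\varnothing$, then since $q$ forms a path $u\longrightarrow v$ traversing precisely the edges of $\co(q)$, the spanned subgraph satisfies $\langle q\rangle=\langle\co(q)\rangle=\langle\mathrm{C}([p]_G)\rangle$, and being spanned by a path it is connected and contains both $u$ and $v$. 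I expect no serious obstacle: the entire argument is a bookkeeping iteration on $|\co(q_i)|$, and the only step requiring genuine care is the confinement of the witness $w_{A'}$ to $\til{(A'\setminus\{a\})}^*$, for which the retract endomorphism $\psi_{A'}$ — available precisely because $G$ is retractable — is exactly the right tool.
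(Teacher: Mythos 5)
Your proposal is correct and takes essentially the same route as the paper: the paper obtains the corollary precisely by iterated application of Lemma~\ref{lem:contentconnected}, with the case $\mathrm{C}([p]_G)=\varnothing$ handled, exactly as in your alternative argument, via the closed path $\pi_u^{\mX_1}(p)$. The only cosmetic difference is in producing the witness for the lemma's hypothesis: since $a\notin\mathrm{C}([q']_G)$ gives a representative $w$ avoiding $a$, retractability already yields $[q'_{a\to 1}]_G=[w_{a\to 1}]_G=[w]_G=[q']_G$, so the word $q'_{a\to 1}\in\til{(A'\setminus\{a\})}^*$ serves directly and the detour through the retraction $\psi_{A'}$ applied to $w$ is not needed, though your version is equally valid.
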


\subsection*{Acknowledgement} We thank the anonymous referee for comments, corrections 
and suggestions for improvements in our presentation.

\end{document}